\definecolor{lightgray}{gray}{0.95}
\newtheorem{theorem}{Theorem}[section]
\newtheorem{lemma}{Lemma}[section]
\newtheorem{proposition}{Proposition}[section]
\theoremstyle{definition}
\newtheorem{definition}{Definition}[section]
\newtheorem{remark}{Remark}[section]
\newtheorem{assumption}{Assumption}[section]
\numberwithin{equation}{section}
\newcommand{\Hf}{\nabla^2 f}
\begin{document}
	
	\title{Global convergence of a modified BFGS-type method based on function information for nonconvex multiobjective optimization problems}
\author{
	Yingxue Yang\thanks{National Center for Applied Mathematics in Chongqing, Chongqing Normal University, Chongqing,  401331, China, E-mails: {\tt  2022110518022@stu.cqnu.edu.cn}}
	\and Xin Deng\thanks{National Center for Applied Mathematics in Chongqing, Chongqing Normal University, Chongqing,  401331, China, E-mails: {\tt  2022110518004@stu.cqnu.edu.cn}} 
	\and Liping Tang \thanks{National Center for Applied Mathematics in Chongqing, Chongqing Normal University, Chongqing,  401331, China, E-mails: {\tt  tanglipings@163.com}}
}

\maketitle

\vspace{.5cm}

\noindent

\noindent
	{\bf Abstract:} 
	 In this paper, based on function information, we propose a modified BFGS-type method for nonconvex multiobjective optimization problems (MFQNMO). In the multiobjective quasi-Newton method (QNMO), each iteration involves separately approximating the Hessian matrix for each component objective function, which results in significant storage and computational burdens. MFQNMO employs a common BFGS-type matrix to approximate the Hessian matrix of all objective functions in each iteration. This matrix is updated using function information from the previous step. This approach strikes a balance between efficacy and computational cost. We confirm the convergence of the method without relying on convexity assumptions, under mild conditions, we establish a local superlinear convergence rate for MFQNMO. Furthermore, we validate its effectiveness through experiments on both nonconvex and convex test problems.
	
	\vspace{8pt}
	\noindent
	{\bf Keywords:}  Nonconvex multiobjective optimization, Quasi-Newton methods, Wolfe line search, Global convergence, Superlinear convergence.
	
	\vspace{8pt}
	\noindent {\bf AMS subject classifications:}

	\section{Introduction} 
In this paper, we consider the following unconstrained multiobjective optimization problem \eqref{MOP}:

\begin{equation}
	\min_{x\in \mathbb{R}^n}f(x)=\left(f_{1}\left(x\right),f_{2}\left(x\right),\ldots,f_{m}\left(x\right)\right)^\top,\tag{MOP}\label{MOP}
\end{equation}

\noindent where $f_i\left(x\right){:}\mathbb{R}^n\to \mathbb{R}$ is continuously differentiable function, for any $i\in[m]$. Multiobjective optimization is widely used in many research areas, which can be found in engineering \cite{engineering 1,engineering 2}, environmental analysis \cite{evironment 1,evironment 2}, machine learning \cite{machine 1,machine 2}, etc.\\
\indent In 2000, Fliege and Svaiter~\cite{steepset} proposed the steepest descent method for multiobjective optimization (SDMO). Building on their pioneering work, researchers have conducted extensive studies on the first-order descent algorithm for solving \eqref{MOP}, (see e.g. \cite{proximal,BB,conditional,conjugate}). The first-order algorithm is often favored for its lower cost than the second-order method, as it avoids the computational burden of determining the Hessian matrix. However, this advantage comes with a trade-off: first-order methods typically exhibit slower convergence rates, especially ill-conditioned problems. \\
\indent Ill-conditioned problems exhibit large curvature variations at certain points, resulting in highly unstable curvature values. The Newton method~\cite{Newton} is effective in such cases as it captures the local curvature of functions and adjusts the direction more efficiently, leading to a faster rate compared to first-order methods. Fliege et al.~\cite{Newton} proved the superlinear convergence rate of the Newton method. However, despite its efficiency, the computational cost per iteration is significantly higher, particularly for large-scale problems. \\
\indent To reduce the computational costs incurred by the direct calculation of the Hessian matrix, the quasi-Newton method is considered. Unlike single objective optimization, for individual objective functions, the quasi-Newton methods \cite{quasi-Newton_1,quasi-Newton_2} employ different matrices to approximate their Hessian matrices, capturing the local curvature. Numerous variants of the quasi-Newton method have been proposed for multi-objective problems (MOPs). In \cite{nonmonotone quasi-Newton}, the global convergence of the quasi-Newton method is established under average nonmonotone line search. Morovati et al. \cite{review-quasi-Newton} extended both the self-scaling BFGS (SS-BFGS) method and the Huang BFGS (H-BGFS) method for multiobjective optimization problems. Prudente and Souza~\cite{wolf-quasi-Newton} proposed a modified BFGS update formula under nonconvex assumptions, ensuring that the algorithm is well-defined. Furthermore, under the assumption of strong convexity, they establish the global convergence and local superlinear convergence rate of the method with Wolfe line search.

In large-scale multiobjective optimization problems, an increase in the number of objective functions results in an exponential growth in the scale of the matrices that need to be stored and computed, particularly in cases with high-dimensional variable spaces. Consequently, it is impractical to calculate or approximate the Hessian matrix for each objective function separately in large-scale scenarios. To address this, Ansary and Panda \cite{modified} utilized a common matrix to approximate all Hessian matrices in each iteration and establish the global convergence under an aggregated Wolf line search. Lapucci and Mansueto \cite{limited BFGS} extended the limited memory Quasi-Newton approach (L-BFGS) for large-scale MOPs, and the global convergence with R-linear convergence rate is obtained under the strong convexity assumption. However, all the improved algorithms mentioned above require convexity conditions to ensure global convergence. Chen et al.~\cite{VBFGS} proposed a variable metric method for unconstrained MOPs. Based on an assumption weaker than the convexity condition, this method establishes strong convergence. Additionally, they mentioned that solving the quadratic constrained subproblem of the second-order method is more difficult than solving the direction-finding dual problem of the first-order method. In \cite{ Global quasi-Newton}, the global convergence can be guaranteed with Wolf line search in the nonconvex case.  \\
\indent For MOPs, there are relatively few studies on quasi-Newton methods in nonconvex cases. This raises the question of whether alternative quasi-Newton method exists that can strike a balance between global convergence and per-iteration cost for nonconvex MOPs. In this paper, inspired by \cite{Global quasi-Newton, modified}, by utilizing function information, we construct a common modified BFGS-type matrix to approximate the Hessian matrices of all objective functions in each iteration. We proposed a modified BFGS-type method based on function information for nonconvex MOPs (MFQNMO).  We verify that the modified BGFS-type matrix is positive definite for each iteration and establish a global convergence for MFQNMO without convexity assumption. Under some mild conditions, we state the superlinear convergence property of MFQNMO. It also shows superior performance  in numerical experiments. Our work has the following advantages:\\
\indent $\bullet $ Rather than approximating the Hessian matrix for each objective function individually, our method utilizes a single common matrix to capture the curvature information. This approach strategically balances the effectiveness of the optimization algorithm with its computational cost.\\
\indent $\bullet $ For nonconvex functions, most versions of quasi-Newton methods encounter difficulties in achieving convergence. Therefore, the MFQNMO method proposed in this paper for nonconvex unconstrained problems is particularly significant and valuable.\\
\indent $\bullet $ our method's update process for the BFGS-type formula does not rely on any specific parameter selections. Instead, it only utilizes information from prior iterations.\\
\indent The structure of this paper is as follows: In Sect. \ref{sec2:peliminaries}, we give some notations and definitions for our later use. Sect. \ref{sec3:steepest} recalls several multiobjective descent methods, including SDMO, QNMO and MQNMO. In Sect. \ref{sec4:MFQNMO}, we introduce the proposed MFQNMO method and establish its convergence analysis. Numerical results are presented in Sect. \ref{sec5:numerical}. At the end of the paper, we draw some conclusions.

\section{Preliminaries} \label{sec2:peliminaries}
In this section, we recall some notations, definitions and results used in the sequel. Let $\mathbb{R}^n$ be the $n$-dimensional Euclidean space, and $\mathbb{R}_+^n$ denote nonnegative orthant of $\mathbb{R}^n$. $N$ represents the set of natural numbers. For any $x:=(x_1,\ldots,x_n)$ and $y:=(y_1,\ldots,y_n)\in\mathbb{R}^n$, we define:
\begin{eqnarray*}
	x\geqslant y&\Leftrightarrow& x-y\in \mathbb{R}_+^n\setminus\{0\};\\
	x\ngeqslant y&\Leftrightarrow& x-y\notin \mathbb{R}_+^n\setminus\{0\};\\
	x>y&\Leftrightarrow &x-y\in \texttt{int}\mathbb{R}_+^n;\\
	x\not>y&\Leftrightarrow &x-y\notin \texttt{int}\mathbb{R}_+^n.
\end{eqnarray*}
The following are some notations that will be used.\\
\indent $\bullet $ $\left[ m \right]=\left\{ 1,2,\cdots ,m \right\}.$\\

\indent $\bullet $ ${{\Delta }_{m}}=\left\{ \lambda :\sum\limits_{i\in \left[ m \right]}{{{\lambda }_{i}}=1},{{\lambda }_{i}}\geqslant 0,\ i\in \left[ m \right] \right\}.$\\

\indent $\bullet $ $\left\| \cdot \right\|$ is the Euclidean norm in ${{\mathbb{R}}^{n}}.$\\

\indent $\bullet $ $~Jf\left( x \right)\in {{\mathbb{R}}^{m\times n}}$, $\nabla {{f}_{i}}\left( x \right)\in {{\mathbb{R}}^{n}}$ ,$\Hf_i(x)$ the Jacobian matrix, the gradient of ${{f}_{i}}$ and the Hessian matrix at $x$, respectively.\\
\indent  $\bullet $ $\mathcal{D}(x,d) \coloneqq \max\limits_{i \in [m]}\nabla f_i(x)^\top d$, $d\in \mathbb{R}^n.$\\
\indent  $\bullet $ For all  $B\in \mathbb{R}^{n\times n}$, $B \succ 0$ means that $B$  is positive definite, $B \succeq 0$ means that $B$ is positive semi-definite.\\
\indent  $\bullet $ For $x,y\in \mathbb{R}^n$, $\langle x,y\rangle_B:=x^\top B y$ and $\|x\|_B:=\sqrt{\langle x,x\rangle_B}$.\\
\indent  $\bullet $ For $x\in \mathbb{R}$, $\lceil x \rceil$ is the least integer greater than or equal to x.\\
\indent  $\bullet $ $A\preceq B \Leftrightarrow B-A \succeq 0,\quad A,B\in \mathbb{R}^{n\times n} $

\begin{definition} \cite{steepset}
	A point ${{x}^{*}}\in {{\mathbb{R}}^{n}}$ is called the Pareto (weakly Pareto ) solution for \ref{MOP}, if there exists no point $x\in {{\mathbb{R}}^{n}}$ such that $f\left( x \right)\leqslant f\left( {{x}^{*}} \right)$ and $f\left( x \right)\ne f\left( {{x}^{*}} \right)~\left( f\left( x \right)<f\left( {{x}^{*}} \right) \right)$.
\end{definition}

\begin{definition} \cite{steepset}
	A point ${{x}^{*}}\in {{\mathbb{R}}^{n}}$ is called a Pareto critical point for \ref{MOP}, if exists $i\in \left[ m \right]$ such that
	$\left\langle \nabla {{f}_{i}}\left( {{x}^{*}} \right),d \right\rangle \geqslant 0,$ for all $d\in {{\mathbb{R}}^{n}}.$
\end{definition}

\begin{definition} \cite{steepset}
	A vector $d\in {{\mathbb{R}}^{n}}$ is called descent direction for $f$ at ${x}^{*}$, if 
	$\left\langle \nabla {{f}_{i}}\left( {{x}^{*}} \right),d \right\rangle <0,$
	for all $i\in \left[ m \right]$.
	
\end{definition}

\begin{lemma}\cite[Theorem 3.1]{Newton}
	The following statements hold:\\
	\indent (i)  if $x^{\ast}$ is local weak Pareto optimal, then $x^{\ast}$ is a Pareto critical point for $f$;\\
	\indent (ii)  if $f$ is convex and $x^{\ast}$ is Pareto critical for $f$, then $x^{\ast}$ is weak Pareto optimal;\\
	\indent (iii)  if $f$ is strictly convex  and $x^{\ast}$ is Pareto critical for $f$, then $x^{\ast}$ is Pareto optimal.
	
\end{lemma}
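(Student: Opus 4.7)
The plan is to prove each of the three parts by contradiction, using in each case the first-order characterization of descent directions together with either Taylor expansion or the convexity inequality.

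For part (i), I would assume $x^{\ast}$ is local weak Pareto optimal but not Pareto critical. The negation of Pareto criticality gives a vector $d\in R^n$ with $\langle \nabla f_i(x^{\ast}),d\rangle<0$ for every $i\in[m]$, i.e., $d$ is a descent direction in the sense of the third definition above. A first-order Taylor expansion then yields $f_i(x^{\ast}+td)=f_i(x^{\ast})+t\langle\nabla f_i(x^{\ast}),d\rangle+o(t)$ for each $i$, and choosing $t>0$ sufficiently small (uniformly in the finite index set $[m]$) produces $f_i(x^{\ast}+td)<f_i(x^{\ast})$ for all $i$, contradicting local weak Pareto optimality in every neighborhood of $x^{\ast}$.

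For part (ii), I would again argue by contradiction: if $x^{\ast}$ is Pareto critical but not weakly Pareto optimal, there exists $y\in R^n$ with $f_i(y)<f_i(x^{\ast})$ for every $i$. Convexity of each $f_i$ gives the gradient inequality $\langle\nabla f_i(x^{\ast}),y-x^{\ast}\rangle\le f_i(y)-f_i(x^{\ast})<0$, so $d:=y-x^{\ast}$ is a common descent direction, contradicting Pareto criticality. Part (iii) is the more delicate case because Pareto (as opposed to weak Pareto) optimality only guarantees a $y$ with $f(y)\le f(x^{\ast})$ and $f(y)\ne f(x^{\ast})$, which need not be componentwise strict. I would recover strictness by exploiting strict convexity: for any $\lambda\in(0,1)$ set $z_\lambda=x^{\ast}+\lambda(y-x^{\ast})$; then strict convexity gives $f_i(z_\lambda)<\lambda f_i(y)+(1-\lambda)f_i(x^{\ast})\le f_i(x^{\ast})$ for every $i$, where the last inequality uses $f_i(y)\le f_i(x^{\ast})$. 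With this strict inequality in hand, the convexity-based gradient argument of part (ii) applies to $z_\lambda$ in place of $y$ and produces the contradictory descent direction.

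The only real subtlety is in part (iii), where I must manufacture strict componentwise decrease from the possibly non-strict vector inequality $f(y)\le f(x^{\ast})$, $f(y)\ne f(x^{\ast})$; the convex combination trick together with strict convexity is the natural fix. The remaining steps are routine first-order analysis, and no further hypotheses beyond continuous differentiability (already assumed in the problem statement) are needed for part (i).
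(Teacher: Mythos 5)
Your argument is correct, and all three parts are handled properly; in particular, the convex-combination trick in part (iii), which manufactures strict componentwise decrease at $z_\lambda=x^{\ast}+\lambda(y-x^{\ast})$ from the non-strict dominance $f(y)\le f(x^{\ast})$, $f(y)\ne f(x^{\ast})$ via strict convexity (valid since $y\ne x^{\ast}$), is exactly the right fix for the only delicate step. Note, however, that the paper itself gives no proof of this lemma at all --- it is imported verbatim as Theorem~3.1 of the cited Fliege--Drummond--Svaiter reference --- so there is nothing internal to compare against; your proof is the standard first-order/convexity argument one finds in that source.
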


\section{Descent methods for MOPs.} \label{sec3:steepest}
In this section, we revisit the steepest descent method~\cite{steepset} and the quasi-Newton method~\cite{quasi-Newton_1} for MOPs.\\

\subsection{The steepest descent method for MOPs.}
For a given $x\in {{\mathbb{R}}^{n}}$, consider the following subproblem \eqref{eq1}: 

\begin{equation}
	\underset{d\in {{\mathbb{R}}^{n}}}{\mathop{\min }}\,\underset{i\in [m]}{\mathop{\max }}\,\left\langle \nabla {{f}_{i}}\left( x \right),d \right\rangle +\frac{1}{2}{{\left\| d \right\|}^{2}}.\label{eq1}
\end{equation}

\noindent Since $\left\langle \nabla {{f}_{i}}\left( x \right),d \right\rangle +\frac{1}{2}{{\left\| d \right\|}^{2}}$ is strongly convex for $i\in \left[ m \right]$, then subproblem \eqref{eq1} has a unique minimizer. We denote that $\theta \left( x \right)$ and $d\left( x \right)$ are the optimal value and the optimal solution in problem \eqref{eq1}, respectively. Therefore,
$$
d\left( x \right)=\underset{d\in {{\mathbb{R}}^{n}}}{\mathop{\text{argmin}}}\,\underset{i\in [m]}{\mathop{\max }}\,\left\langle \nabla {{f}_{i}}\left( x \right),d \right\rangle +\frac{1}{2}{{\left\| d \right\|}^{2}},
$$

\noindent and

$$
\theta \left( x \right)=\underset{d\in {{\mathbb{R}}^{n}}}{\mathop{\min }}\,\underset{i\in [m]}{\mathop{\max }}\,\left\langle \nabla {{f}_{i}}\left( x \right),d \right\rangle +\frac{1}{2}{{\left\| d \right\|}^{2}}.
$$

\noindent The problem can be transformed into the following equivalent optimization problem \eqref{SDQP}, which is easier to solve.
\begin{equation} \label{SDQP}
	\begin{array}{cl}
		\underset{(t,d)\in \mathbb{R}\times \mathbb{R}^{n}}{\text{min}}    & \;  t + \frac{1}{2}\|d\|^2          \\
		\text{s.t.} & \langle\nabla f_{i}(x), d\rangle \leqslant t, \quad i\in [m].
	\end{array}
\end{equation}
\noindent We consider that the dual problem of problem \eqref{SDQP}:

$$\underset{\lambda \in {{\Delta }_{m}}}{\mathop{\max }}\,\underset{\left( t,d \right)\in \mathbb{R}\times {{\mathbb{R}}^{n}}}{\mathop{\min }}\,L\left( \lambda ,\left( t,d \right) \right),$$
\noindent where $L\left( \lambda ,\left( t,d \right) \right)$ is a Lagrange function,
$$L\left( \lambda ,\left( t,d \right) \right)=t+\frac{1}{2}{{\left\| d \right\|}^{2}}+\underset{i\in \left[ m \right]}{\mathop \sum }\,{{\lambda }_{i}}\left( \left\langle \nabla {{f}_{i}}\left( x \right),d \right\rangle -t \right).$$

\noindent According to the KKT condition, we can obtain the steepest descent direction:

$$
d_{SD}\left( x \right)=-\underset{i\in \left[ m \right]}{\mathop \sum }\,{{\lambda }_{i}^{SD}}\left( x \right)\nabla {{f}_{i}}\left( x \right),\label{eq4}
$$

\noindent where $\lambda_{SD} \left( x \right)={{\left( {{\lambda }_{1}^{SD}}\left( x \right),{{\lambda }_{2}^{SD}}\left( x \right),\cdots ,{{\lambda }_{m}^{SD}}\left( x \right) \right)}^\top}$is the solution of the following dual problem

\begin{equation}
	-\underset{\lambda \in {{\Delta }_{m}}}{\mathop{\min }}\,\frac{1}{2}{{\left\| \underset{i\in \left[ m \right]}{\mathop \sum }\,{{\lambda }_{i}}\nabla {{f}_{i}}\left( x \right) \right\|}^{2}},\label{SDDP}
\end{equation}

\noindent owing to strong duality, we have:

$$
\theta_{SD} \left( x \right)=-\frac{1}{2}{{\left\| \underset{i\in \left[ m \right]}{\mathop \sum }\,{{\lambda }_{i}^{SD}}\left( x \right)\nabla {{f}_{i}}\left( x \right) \right\|}^{2}}=-\frac{1}{2}{{\left\| d_{SD}\left( x \right) \right\|}^{2}}\leqslant 0,
$$

$$
\left\langle \nabla {{f}_{i}}\left( x \right),d_{SD}\left( x \right) \right\rangle \leqslant -{{\left\| d_{SD}\left( x \right) \right\|}^{2}}\leqslant 0.
$$
The following lemma describes the proposition of $\theta_{SD} \left( x \right)$ and $d_{SD}\left( x \right).$

\begin{lemma}\label{lem:SDdirection} \cite[Proposition 3.1]{steepset} 
	For problem \eqref{eq1}, $\theta_{SD} \left( x \right)$ and $d_{SD}\left( x \right)$ are the optimal value and solution, respectively. Then,\\
	$(a)$ It can be conclude that the equivalent relation is as follows.\\
	\indent $(i)$ The point $x\in {{\mathbb{R}}^{n}}$ is not Pareto critical;\\
	\indent $(ii)$ $d_{SD}\left( x \right)\ne 0$;\\
	\indent $(iii)$ $\theta_{SD} \left( x \right)<0$.\\
	$(b)$ The function $d_{SD}\left( x \right)$ and $\theta_{SD} \left( x \right)$ are continuous, for all $x\in {{\mathbb{R}}^{n}}.$
\end{lemma}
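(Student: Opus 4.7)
The plan is to prove part (a) by exploiting the identity $\theta_{SD}(x) = -\frac{1}{2}\|d_{SD}(x)\|^2$ displayed just above the lemma, which immediately yields $(ii) \Leftrightarrow (iii)$; it then suffices to close the triangle with $(i) \Leftrightarrow (iii)$. For $(i) \Rightarrow (iii)$, I would test the objective of \eqref{eq1} at a scaled descent direction: pick $\bar d$ with $\mathcal{D}(x, \bar d) < 0$ (which exists by non-criticality) and observe that the value $t\, \mathcal{D}(x, \bar d) + \frac{t^2}{2}\|\bar d\|^2$ is strictly negative for sufficiently small $t > 0$, forcing $\theta_{SD}(x) < 0$. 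For the converse $(iii) \Rightarrow (i)$, I would invoke the inequality $\langle \nabla f_i(x), d_{SD}(x)\rangle \le -\|d_{SD}(x)\|^2 < 0$ already derived in the text just above the lemma, which shows $d_{SD}(x)$ is itself a descent direction and therefore rules out Pareto criticality.

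For part (b), my plan is an elementary compactness-plus-uniqueness argument. Set $\varphi(x, d) := \mathcal{D}(x, d) + \frac{1}{2}\|d\|^2$, which is jointly continuous on $\mathbb{R}^n \times \mathbb{R}^n$ as the maximum of finitely many continuous functions plus a quadratic. First I would establish a locally uniform bound on the minimizer: since $\varphi(x, d_{SD}(x)) \le \varphi(x, 0) = 0$ and $-\mathcal{D}(x, d) \le \max_{i\in[m]} \|\nabla f_i(x)\| \cdot \|d\|$, rearranging gives $\|d_{SD}(x)\| \le 2 \max_{i\in[m]} \|\nabla f_i(x)\|$, so $d_{SD}$ is bounded on any bounded set of $x$. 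Then, given $x_k \to \bar x$, extract any convergent subsequence $d_{SD}(x_{k_j}) \to d^*$; passing to the limit in the optimality relation $\varphi(x_{k_j}, d_{SD}(x_{k_j})) \le \varphi(x_{k_j}, d)$ via joint continuity yields that $d^*$ minimizes $\varphi(\bar x, \cdot)$, and uniqueness of the minimizer (strong convexity of $\varphi(\bar x, \cdot)$ in $d$) forces $d^* = d_{SD}(\bar x)$. Since every convergent subsequence has the same limit, the full sequence converges. Continuity of $\theta_{SD}$ then follows from $\theta_{SD}(x) = \varphi(x, d_{SD}(x))$, or equivalently from the identity $\theta_{SD}(x) = -\frac{1}{2}\|d_{SD}(x)\|^2$.

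The only nonroutine step is the continuity of the argmin, since the subproblem is defined implicitly via a $\min\max$. The linchpin is the locally uniform bound on $\|d_{SD}(x)\|$ in terms of continuous data, together with the uniqueness of the minimizer; without strong convexity of the quadratic subproblem one would only obtain upper hemicontinuity of the argmin correspondence, which is strictly weaker than the pointwise continuity asserted here. I would also note that no convexity of $f$ itself enters anywhere in the proof — only strong convexity of the quadratic in $d$ — which is precisely what makes this lemma usable in the nonconvex setting targeted by the rest of the paper.
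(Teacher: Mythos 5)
Your argument is correct and complete; the only thing to note is that the paper itself offers no proof of this lemma at all --- it is imported verbatim from Fliege and Svaiter's Proposition~3.1 (reference \cite{steepset}), so there is no in-paper proof to compare against. What you have written is essentially the standard argument from that source, and it is sound: the cycle $(ii)\Leftrightarrow(iii)$ from $\theta_{SD}(x)=-\tfrac12\|d_{SD}(x)\|^2$, the direction $(i)\Rightarrow(iii)$ by evaluating the objective at $t\bar d$ and using that $\mathcal{D}(x,t\bar d)=t\,\mathcal{D}(x,\bar d)$ for $t>0$, the converse via $\langle\nabla f_i(x),d_{SD}(x)\rangle\le\mathcal{D}(x,d_{SD}(x))=-\|d_{SD}(x)\|^2<0$, and for part (b) the local bound $\|d_{SD}(x)\|\le 2\max_{i\in[m]}\|\nabla f_i(x)\|$ followed by the subsequence-plus-uniqueness argument, with uniqueness supplied by strong convexity of $d\mapsto\mathcal{D}(x,d)+\tfrac12\|d\|^2$. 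One caveat worth flagging: your step ``$x$ not Pareto critical $\Rightarrow$ there exists $\bar d$ with $\mathcal{D}(x,\bar d)<0$'' uses the standard definition of Pareto criticality ($\forall d\,\exists i$ with $\langle\nabla f_i(x),d\rangle\ge 0$), whereas the paper's Definition~2.2 has the quantifiers transposed ($\exists i\,\forall d$); your reading is the intended one and the one under which the lemma is true, but strictly speaking your proof does not follow from the definition as literally printed in this paper. Your closing observation --- that no convexity of $f$ is used, only strong convexity of the quadratic subproblem in $d$ --- is accurate and is indeed why the lemma is available in the nonconvex setting of Section~4.
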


\subsection{The quasi-Newton methods for MOPs (QNMO).}
Quasi-Newton direction $d_q(x)$ in \cite{quasi-Newton_1} is the optimal solution to the following subproblem:
\begin{equation}\label{quasi-newton}
	\min_{d\in \mathbb{R}^n} \max_{i\in [m]} \nabla f_i (x)^\top d + \frac{1}{2}d^\top B_i(x) d,
\end{equation}
where $B_i(x)\in \mathbb{R}^{n\times n}$, approximates the Hessian matrix $\Hf_i(x).$ For $i\in [m]$, $B_i(x)\succ 0$ such that $\nabla f_i (x)^\top d + \frac{1}{2}d^\top B_i(x) d$ is strongly convex. Then, the subproblem \ref{quasi-newton} has a unique minimizer. We denote that $\theta_q \left( x \right)$ and $d_q\left( x \right)$ be the optimal value and solution in problem \eqref{quasi-newton}, respectively,
$$
d_q(x) = \mathop{\text{argmin}}_{d \in \mathbb{R}^n}\max_{i\in [m] }\nabla f_i (x)^\top d + \frac{1}{2}d^\top B_i(x) d,
$$ 
and
$$
\theta_q(x) = \mathop{\text{min}}_{d \in \mathbb{R}^n}\max_{i \in [m] } \nabla f_i (x)^\top d + \frac{1}{2}d^\top B_i(x) d.
$$
\eqref{quasi-newton} is equivalent to the convex quadratic optimization problem:
\begin{align*} 
	\begin{array}{cl}
		\underset{(t,d)\in \mathbb{R}\times \mathbb{R}^{n}}{\text{min}}    & \;  t          \nonumber \\
		\text{s.t.} & \nabla f_i (x)^\top d + \frac{1}{2}d^\top B_{i}(x) d  \leqslant t,  \quad \forall i\in [m].\nonumber
	\end{array}
\end{align*}
According to the KKT condition, we can obtain the quasi-Newton direction:
\begin{equation}\label{quasi-d}
	d_q(x) = - \left[ \sum_{i=1}^{m} \lambda_i^{q}(x)B_i(x) \right]^{-1} \left(\sum_{i=1}^{m}\lambda_i^{q}(x)\nabla f_i (x)\right),
\end{equation}
where $\lambda_{q} \left( x \right)={{\left( {{\lambda }_{1}^{q}}\left( x \right),{{\lambda }_{2}^{q}}\left( x \right),\cdots ,{{\lambda }_{m}^{q}}\left( x \right) \right)}^\top}$is the solution of the following dual problem

$$
-\underset{\lambda \in {{\Delta }_{m}}}{\mathop{\min }}\,\frac{1}{2}{{\left\| \underset{i\in \left[ m \right]}{\mathop \sum }\,{{\lambda }_{i}}\nabla {{f}_{i}}\left( x \right) \right\|^2_{\left[\underset{i\in \left[ m \right]}{\mathop \sum }\,{{\lambda }_{i}{B_{i}(x)} } \right]^{-1}}}},
$$ 
owing to strong duality, we have
$$
\theta_q(x) = -\frac{1}{2} d_q(x)^{\top} \left[ \sum_{i=1}^{m} \lambda_i^{q}(x)B_i(x) \right] d_q(x).    
$$
\subsection{The modified quasi-Newton methods for MOPs (MQNMO).}
Even if quasi-Newton avoids calculating the second-order Hessian matrix, the per-iteration cost is still expensive. Recently, Ansary and Panda \cite{modified} proposed a modified quasi-Newton method (MQNMO) that utilized a single positive matrix to approximate all the Hessian matrices. The Modified quasi-Newton direction $d(x)$ is the optimal solution to the following subproblem:
\begin{equation}\label{M-newton}
	\min_{d\in \mathbb{R}^n} \max_{i\in [m]} \nabla f_i (x)^\top d + \frac{1}{2}d^\top B(x) d,
\end{equation}
where $B(x)\in \mathbb{R}^{n\times n}$ and  $B(x)\succ 0$. Problem \eqref{M-newton} is equivalent to the convex quadratic optimization problem as follows 
\begin{align*} 
	\begin{array}{cl}
		\underset{(t,d)\in \mathbb{R}\times \mathbb{R}^{n}}{\text{min}}    & \;  t          \\
		\text{s.t.} & \nabla f_i (x)^\top d + \frac{1}{2}d^\top B(x) d  \leqslant t,  \quad \forall i\in [m].
	\end{array}
\end{align*}
Similar to the analysis in the previous section, we can obtain the modified quasi-Newton direction using the KKT condition
\begin{equation}\label{direction}
	d(x) = - B(x)^{-1}\left( \sum_{i=1}^{m}\lambda_i(x)\nabla f_i (x) \right),
\end{equation}
and
\begin{equation}\label{theta}
	\theta(x) = -\frac{1}{2} d(x)^\top B(x)d(x),    
\end{equation}
where $\lambda \left( x \right)={{\left( {{\lambda }_{1}}\left( x \right),{{\lambda }_{2}}\left( x \right),\cdots ,{{\lambda }_{m}}\left( x \right) \right)}^\top}$is the solution of the following dual problem \eqref{MDP}

\begin{equation}
	-\underset{\lambda \in {{\Delta }_{m}}}{\mathop{\min }}\,\frac{1}{2}{{\left\| \underset{i\in \left[ m \right]}{\mathop \sum }\,{{\lambda }_{i}}\nabla {{f}_{i}}\left( x \right) \right\|}_{B(x)^{-1}}^{2}}.\label{MDP}
\end{equation}
According to the above analysis, we can get
\begin{equation}\label{dotz-max}
	\mathcal{D}(x_k,d_k) \leqslant -\|d_k\|_{B_k}^2
\end{equation}
The following rank one DFP formula of generating $B_{k+1}$ is defined in \cite{modified} by
\begin{equation}\label{DFP}
	B_{k+1} = (I-\frac{y_ks_k^\top}{s_k^\top y_k}) B_{k} (I-\frac{s_k y_k^\top}{s_k^\top y_k}) + \frac{y_ky_k^\top}{s_k^\top y_k},
\end{equation}
where $s_k = x_{k+1}-x_{k}$ and $y_k = \sum\limits_{i=1}^{m} \lambda_i^k(\nabla f_i(x_{k+1})-\nabla f_i(x_k)).$

\section{MFQNMO:Modified BFGS-type method based on function information for MOPs.}   \label{sec4:MFQNMO}

In this section, we propose a modified BFGS-type method based on function information for MOPs and analyze its properties. The formal description of the algorithm is as follows. 
\begin{algorithm}[H]
	\caption{MFQNMO}\label{algo1}
	\begin{algorithmic}[1]
		\Require Choose starting point $x^0\in\mathbb{R}^n$. Let $0<\sigma_{1}<\sigma_{2} <1$, $k=0$, $\varepsilon \geqslant 0$ and $B_0\succ 0.$
		\item[Step 1.]  Compute the solution ${{\lambda }_{k}}={{\left(\lambda_1^{k},\lambda_2^{k},\cdots ,\lambda_m^{k} \right)}^\top}$ of problem \eqref{MDP} and ${{d}_{k}}$, $\theta_{k}$ as \eqref{direction}, \eqref{theta}.
		\item[Step 2.] If $\left| {{\theta }_{k}} \right|<\varepsilon $, return $x_{k}.$
		\item[Step 3.] Compute step size $\alpha_{k}>0$ satisfies
		\begin{align} 
			f_i(x_k+\alpha_k d_k) & \leqslant f_i(x_k)+\sigma_1\alpha_k \mathcal{D}(x_k,d_k),  \quad \forall i \in [m], \label{wolf1}  \\ 
			\mathcal{D}(x^k+\alpha_kd_k,d_k) & \geqslant \sigma_2 \mathcal{D}(x_k,d_k), \label{wolf2}
		\end{align}
		\item[Step 4.] Set $x_{k+1} = x_k + \alpha_k d_k$.
		\item[Step 5.] Compute
		\begin{equation}\label{etak}
			\eta_k = \dfrac{y_k^\top s_k}{\|s_k\|^2}, 
		\end{equation}
		\begin{equation}\label{mk}
			m_k = \max \{-\eta_k, 0\}  + \sum_{i=1}^m \lambda_i^k (f_i(x_k)-f_i(x_{k+1})).
		\end{equation}
		\begin{equation}\label{gammak}
			\gamma_k  = y_k + m_ks_k.
		\end{equation}
		where $s_k = x_{k+1}-x_{k}$ , $\mu_i^k = \nabla f_i(x_{k+1})-\nabla f_i(x_k)$ and $y_k = \sum\limits_{i=1}^m \lambda_i^k \mu_{i}^k$. 
		\item[Step 6.]  Update the BFGS-type matrices,
		\begin{equation}\label{MFBFGS}
			B_{k+1} = B_{k} - \dfrac{B_{k}s_ks_k^\top B_{k}}{s_k^\top B_ks_k} +\dfrac{\gamma_k\gamma_k^\top }{\gamma_{k}^\top s_k},
		\end{equation}
		\begin{equation}\label{MFBFGS_inverse}
			B_{k+1}^{-1} = (I-\frac{s_k\gamma_k^\top}{s_k^\top\gamma_k}) B_{k}^{-1} (I-\frac{\gamma_k s_k^\top}{s_k^\top\gamma_k}) + \frac{s_ks_k^\top}{s_k^\top\gamma_k}. 
		\end{equation}
		\item[Step 7.] Set $k\leftarrow k+1$ and go to Step 1.		
	\end{algorithmic} 
\end{algorithm}

\begin{remark}
	In step 1, solving the descent direction through the dual problem of subproblem \ref{MDP}, it is noted that the initial BFGS matrix is selected as a positive definite matrix, usually chosen as the unit matrix. Step 2 is the termination criterion of the algorithm. In step 3, we search the step size through the Wolfe line search. In steps 5-6, we notice that the BFGS-type update formula  \eqref{MFBFGS} and \eqref{MFBFGS_inverse} is different from \cite{Global quasi-Newton}. We use the previous iterative function information to update the $m_k$ adaptively. It can note that 
	\begin{equation}\label{section equation}
		B_{k+1}s_k = y_k + m_ks_k.
	\end{equation}
	
\end{remark}
The following result shows that Algorithm \ref{algo1} is well-defined without convexity assumption. 

\begin{theorem} $x_k$ is not the Pareto critical point, then $B_k$ updated by Algorithm \ref{algo1} is positive definite, for all $k\ge 0$.
\end{theorem}
\begin{proof}
	By the definition of $\gamma_k$, $\eta_k$ and $m_k$, we have
	\begin{align}
		\gamma_k^\top s_k 
		& = y_k^\top s_k + m_k\|s_k\|^2 \nonumber \\ 
		& =\left(\frac{y_k^\top s_k}{\|s_k\|^2} + m_k \right)\|s_k\|^2 \nonumber \\ 
		& = (\eta_k + m_k)\|s_k\|^2 \nonumber \\ 
		& = \left[\eta_k + \max \{-\eta_k, 0\}  + \sum_{i=1}^m \lambda_i^k \left(f_i(x_k)-f_i(x_{k+1})\right)\right]\|s_k\|^2 \nonumber \\ 
		& \geqslant \sum_{i=1}^m \lambda_i^k \left(f_i(x_k)-f_i(x_{k+1})\right)\|s_k\|^2 \label{ieq:zhuhecha}\\ 
		& > 0. \label{eq:gammas_k}
	\end{align}
	Then for all vector $x\in \mathbb{R}^n$,
	\begin{equation}
		x^\top B_{k+1} x = \|x\|_{B_{k}}^2-\dfrac{\langle x,s_k \rangle_{B_{k}}^2}{\|s_k\|_{B_{k}}^2}+\dfrac{(x^\top\gamma_k)^2}{\gamma_{k}^\top s_k},
	\end{equation}
	it follows by the Cauchy-Schwarz inequality and \eqref{eq:gammas_k} that
	\begin{equation}\label{eq:x*B_{k+1} x}
		x^\top B_{k+1} x \geqslant \dfrac{(x^\top\gamma_k)^2}{\gamma_{k}^\top s_k} \geqslant 0. 
	\end{equation}
	
	\noindent Since $x^\top B_{k+1} x = 0$ if and only if $x^\top\gamma_k = 0$ and $x= cs^k$, where $c$ is a constant. It follows that  $cs_k^\top \gamma_{k}=0$. By \eqref{eq:gammas_k}, we can obtain that $c=0$ and $x=0.$ Then for all nonzero vector $x\in \mathbb{R}^n,$
	$$x^\top B_{k+1} x > 0,$$
	we conclude the proof.
\end{proof}
\begin{assumption}\label{assum1}
	For any $z\in \mathbb{R}^n$, the level set $\left\{ x:f(x)\leqslant f(z) \right\}$ is a bounded set. 
\end{assumption}
Next, we show that the Wolf line search in Algorithm \ref{algo1} can terminate at a finite step.
\begin{proposition}\label{pro:step_bound}
	Suppose Assumption \ref{assum1} holds and $d_k$ is the descent direction. Then there exists an interval of values $[\alpha_1, \alpha_2]$, with $0<\alpha_1< \alpha_2$, such that for all $\alpha \in [\alpha_1, \alpha_2]$, inequalities \eqref{wolf1} and \eqref{wolf2} hold.
\end{proposition}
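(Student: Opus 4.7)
The plan is to adapt the classical Wolfe-line-search existence argument to the multiobjective setting, handling the max function $\mathcal{D}(\cdot,d_k)$ carefully in place of a single gradient inner product. First I would introduce the auxiliary function
\begin{equation*}
	\psi(\alpha) := \max_{i\in[m]}\bigl[f_i(x_k+\alpha d_k)-f_i(x_k)-\rho\alpha\mathcal{D}(x_k,d_k)\bigr],
\end{equation*}
which is continuous in $\alpha$ with $\psi(0)=0$; the Armijo condition \eqref{wolf1} is exactly $\psi(\alpha)\le 0$. Because $d_k$ is a descent direction, $\mathcal{D}(x_k,d_k)<0$, and a first-order expansion gives $f_i(x_k+\alpha d_k)-f_i(x_k)=\alpha\nabla f_i(x_k)^\top d_k+o(\alpha)\le \alpha\mathcal{D}(x_k,d_k)+o(\alpha)$; combined with $\rho<1$ this yields $\psi(\alpha)\le(1-\rho)\alpha\mathcal{D}(x_k,d_k)+o(\alpha)<0$ for small $\alpha>0$, so \eqref{wolf1} holds strictly near $0$.

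Next I would use Assumption \ref{assum1} to force $\psi$ to become positive at some finite step. Since the level set $\{x:f(x)\le f(x_k)\}$ is bounded and $d_k\ne 0$, the ray $\alpha\mapsto x_k+\alpha d_k$ must leave this level set; hence there exist $\hat\alpha>0$ and $\hat\imath\in[m]$ with $f_{\hat\imath}(x_k+\hat\alpha d_k)>f_{\hat\imath}(x_k)$, which together with $-\rho\hat\alpha\mathcal{D}(x_k,d_k)>0$ forces $\psi(\hat\alpha)>0$. Setting $\bar\alpha:=\inf\{\alpha>0:\psi(\alpha)\ge 0\}$, continuity and the intermediate value theorem give $\bar\alpha\in(0,\hat\alpha)$, $\psi(\bar\alpha)=0$, and $\psi<0$ strictly on $(0,\bar\alpha)$, so \eqref{wolf1} is satisfied throughout this interval.

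The final step is to locate a point at which the curvature condition \eqref{wolf2} also holds. Pick an index $\bar\imath$ attaining the max in $\psi(\bar\alpha)$; then $f_{\bar\imath}(x_k+\bar\alpha d_k)-f_{\bar\imath}(x_k)=\rho\bar\alpha\mathcal{D}(x_k,d_k)$, and the mean value theorem applied to the scalar map $\alpha\mapsto f_{\bar\imath}(x_k+\alpha d_k)$ produces $\tilde\alpha\in(0,\bar\alpha)$ with $\nabla f_{\bar\imath}(x_k+\tilde\alpha d_k)^\top d_k=\rho\mathcal{D}(x_k,d_k)$. Using $\rho<\sigma$ (identifying $\rho=\sigma_1$, $\sigma=\sigma_2$) and $\mathcal{D}(x_k,d_k)<0$, I would conclude
\begin{equation*}
	\mathcal{D}(x_k+\tilde\alpha d_k,d_k)\;\ge\;\nabla f_{\bar\imath}(x_k+\tilde\alpha d_k)^\top d_k\;=\;\rho\,\mathcal{D}(x_k,d_k)\;>\;\sigma\,\mathcal{D}(x_k,d_k),
\end{equation*}
so \eqref{wolf2} holds at $\tilde\alpha$. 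Continuity of each $f_i$ and of $\mathcal{D}(\cdot,d_k)$ (a pointwise maximum of finitely many continuous functions) then propagates both inequalities to a whole subinterval $[\alpha_1,\alpha_2]\subset(0,\bar\alpha)$ about $\tilde\alpha$.

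The main obstacle I anticipate is that $\mathcal{D}(\cdot,d_k)$ is only piecewise smooth, so $\psi$ cannot be differentiated at the crossover point. The device of freezing the single active index $\bar\imath$ at the first Armijo-violation step $\bar\alpha$ reduces the whole argument to a one-dimensional MVT on the smooth component $f_{\bar\imath}$ and avoids any subdifferential machinery, which is the crux of the multiobjective extension.
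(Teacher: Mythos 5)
Your argument is correct and is essentially the same proof the paper relies on: the paper omits the details and defers to \cite[Proposition 2]{limited BFGS}, whose argument is exactly this classical one (Armijo holds near $0$ by a first-order expansion, fails at some finite $\hat\alpha$ because the bounded level set forces some $f_{\hat\imath}$ to increase, the first crossing $\bar\alpha$ gives equality for an active index, and the mean value theorem on that component produces a point where the curvature condition holds strictly, which continuity spreads to an interval). The only thing worth making explicit is that you need $\sigma_1<\sigma_2$ (your $\rho<\sigma$), which the algorithm's statement of ``$\sigma_1,\sigma_2\in(0,1)$'' leaves implicit but which is standard for Wolfe line searches and consistent with the paper's choice $\sigma_1=10^{-4}$, $\sigma_2=0.1$.
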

\begin{proof}
	The proof is similar to that in \cite[Proposition 2]{limited BFGS}, we omit it here.
\end{proof}

\begin{lemma}\label{lem:direction}
	Let $\theta(x)$ and $d(x)$ be the optimal value and solution in problem \eqref{M-newton}. Then we have:\\
	\noindent $(a)$ Vector $d(x)\ne 0$ is a descent direction.\\
	\noindent $(b)$ The following conditions are equivalent:\\
	\indent $(i)$ The point $x\in {{\mathbb{R}}^{n}}$ is not Pareto critical;\\
	\indent $(ii)$ $d(x)\ne 0$;\\
	\indent $(iii)$ $\theta(x)<0$.
\end{lemma}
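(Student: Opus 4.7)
The plan is to treat the three parts in order, since $(a)$ and $(b)$ are essentially algebraic consequences of the optimality characterization \eqref{direction}--\eqref{theta}, while $(c)$ is the genuine piece of analysis.

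For $(a)$, I would exploit that $d(x)$ attains the min-max optimum $\theta(x)$. For every $i\in[m]$, feasibility of the epigraph formulation gives $\nabla f_i(x)^\top d(x)+\tfrac12 d(x)^\top B(x)d(x)\le\theta(x)$, and by \eqref{theta} the right-hand side equals $-\tfrac12 d(x)^\top B(x)d(x)$. Rearranging yields $\nabla f_i(x)^\top d(x)\le -d(x)^\top B(x)d(x)$, which is strictly negative whenever $d(x)\ne 0$ because $B(x)\succ 0$. Hence $d(x)$ is a descent direction.

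For $(b)$, I would prove the cycle $(ii)\!\Rightarrow\!(iii)\!\Rightarrow\!(i)\!\Rightarrow\!(ii)$. The implication $(ii)\!\Rightarrow\!(iii)$ is immediate from \eqref{theta} together with positive definiteness of $B(x)$. For $(iii)\!\Rightarrow\!(i)$, if $\theta(x)<0$ then $(a)$ shows each $\nabla f_i(x)^\top d(x)<0$, contradicting Pareto criticality. For $(i)\!\Rightarrow\!(ii)$, non-criticality supplies a direction $\tilde d$ with $\max_i\nabla f_i(x)^\top\tilde d<0$; plugging $t\tilde d$ into the objective of \eqref{M-newton} gives $tc+\tfrac{t^2}{2}\tilde d^\top B(x)\tilde d$ with $c<0$, which is strictly negative for sufficiently small $t>0$, forcing $\theta(x)<0$ and therefore $d(x)\ne 0$.

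For $(c)$, I would argue by contradiction. Assume $x^{*}$ is not Pareto critical; by Definition the preceding lemma chain, there exists $\tilde d$ and $c>0$ with $\max_i\nabla f_i(x^{*})^\top\tilde d\le -c$. Continuity of each $\nabla f_i$ lets me transfer this inequality to $x_k$ for all sufficiently large $k\in K$. Using $t\tilde d$ as a test point in the subproblem \eqref{M-newton} at $x_k$ and minimizing over $t>0$ gives the upper bound
\begin{equation*}
\theta_k \;\le\; -\frac{c^{2}}{2\,\tilde d^\top B_k\tilde d}.
\end{equation*}
Comparing with the exact expression $\theta_k=-\tfrac12 d_k^\top B_k d_k$ from \eqref{theta} yields $(d_k^\top B_k d_k)(\tilde d^\top B_k\tilde d)\ge c^{2}$, which contradicts $d_k\to 0$ as soon as $\{B_k\}$ is bounded on the subsequence.

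The hard part will be justifying the boundedness of $\{B_k\}$ used at the end of $(c)$; positive definiteness alone does not suffice. I would either invoke a standing boundedness hypothesis for $\{B_k\}$ (natural since the formula \eqref{MFBFGS} together with Assumption \ref{assum1} typically yields such a bound), or reformulate the contradiction using the identity $B_k d_k=-\sum_i\lambda_i^k\nabla f_i(x_k)$ coming from \eqref{direction}. In the latter approach, compactness of the simplex $\Delta_m$ gives an accumulation point $\lambda^{*}$ of $\{\lambda_k\}$, and passing to the limit along the corresponding subsequence forces $\sum_i\lambda_i^{*}\nabla f_i(x^{*})=0$, which is precisely the KKT characterization of Pareto criticality.
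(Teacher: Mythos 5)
The paper does not actually prove this lemma---it defers entirely to \cite[Proposition 4.1]{PBBMO}---so there is no in-paper argument to compare against; your proposal supplies the standard self-contained proof. Parts $(a)$ and $(b)$ are correct and complete: the inequality $\nabla f_i(x)^\top d(x)\le -d(x)^\top B(x)d(x)$ extracted from $\theta(x)=-\tfrac12 d(x)^\top B(x)d(x)$, the cycle $(ii)\Rightarrow(iii)\Rightarrow(i)\Rightarrow(ii)$, and the test-point argument with $t\tilde d$ for small $t>0$ are exactly the expected route (and reproduce the paper's own inequality \eqref{dotz-max}).

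For part $(c)$ you correctly identify the real issue: positive definiteness of $B_k$ alone does not suffice, and an upper bound on $\{B_k\}$ is genuinely necessary---with $B_k=k^2 I_n$ one gets $d_k\to 0$ at any point whatsoever, so $(c)$ is simply false without it. Your first remedy is the right one: the paper's Assumption \ref{assum2}(ii) posits $aI_n\preceq B(x)\preceq bI_n$, and under $B_k\preceq bI_n$ your inequality $(d_k^\top B_kd_k)(\tilde d^\top B_k\tilde d)\ge c^2$ (up to the harmless constant coming from the continuity step) gives the contradiction, since then $\tilde d^\top B_k\tilde d\le b\|\tilde d\|^2$ while $d_k^\top B_kd_k\le b\|d_k\|^2\to 0$. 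Your second, ``alternative'' remedy, however, does not avoid the issue: to pass to the limit in $B_kd_k=-\sum_i\lambda_i^k\nabla f_i(x_k)$ and conclude $\sum_i\lambda_i^{*}\nabla f_i(x^{*})=0$ you must know that $B_kd_k\to 0$, and $d_k\to 0$ does not imply this without the very boundedness you were trying to dispense with. So part $(c)$ should be stated under Assumption \ref{assum2}(ii) (as the paper implicitly intends, since the lemma is only invoked in that setting), and the second route should be dropped or likewise conditioned on $B_k\preceq bI_n$.
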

\begin{proof}
	These assertions can be derived by using the same proof demonstrated in the  \cite[Lemma 2.2]{Global quasi-Newton}. 
\end{proof}

\subsection{Global convergence} \label{Glbal}
In this section, we present the global convergence analysis of Algorithm \ref{algo1} without assuming convexity. First, we require the following assumptions.

\begin{assumption}\label{assum2}
	\textit{(i)} For $i \in [m]$, $\nabla f_i(x)$ is Lipschitz continuous with constant $L_i.$ \\
	(ii)For any $x\in \mathbb{R}^{n}$, there exists constant $a$ and $b$ such that
	$$aI_n \preceq B(x) \preceq bI_n.$$
\end{assumption}	
In the subsection, the following properties imply that under the Wolfe line search, the objective function will be significantly decreased along the direction generated by Algorithm \ref{algo1}.

\begin{lemma}\label{lem:cos}
	Let $B_k$ be generated by the BFGS update formula \eqref{MFBFGS} in Algorithm \ref{algo1}. If there exists constants $c_1$ and $c_2$ such that $\frac{\gamma_k^{\top}s_k}{\|s_k\|^{2}} \geqslant c_1 >0$, $\frac{\|\gamma_k\|^{2}}{\gamma_k^{\top}s_k} \leqslant c_2$. Then, for any $p\in (0,1)$, there exists a constant $\delta>0$ such that,  for any $k\in \mathbb{N}$,
	\begin{equation}
		\cos\beta_j = \frac{s_j^{\top}B_js_j}{\|s_j\|\|B_js_j\|} \geqslant \delta,
	\end{equation}
	holds for at least $\lceil p(k+1) \rceil$ values of  $j \in [0,k].$
\end{lemma}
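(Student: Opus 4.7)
The plan is to adapt the classical Byrd--Nocedal analysis of BFGS (via trace and determinant), using the fact that the update \eqref{MFBFGS} has exactly the shape of a standard BFGS update with $\gamma_k$ playing the role of $y_k$. The hypotheses $\gamma_k^\top s_k/\|s_k\|^2 \ge c_1$ and $\|\gamma_k\|^2/(\gamma_k^\top s_k)\le c_2$ are precisely the conditions that make this analysis go through.

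First I would compute the two key identities by taking the trace and determinant of \eqref{MFBFGS}:
\begin{equation*}
\operatorname{tr}(B_{k+1}) = \operatorname{tr}(B_k) - \frac{\|B_k s_k\|^2}{s_k^\top B_k s_k} + \frac{\|\gamma_k\|^2}{\gamma_k^\top s_k}, \qquad \det(B_{k+1}) = \det(B_k)\cdot\frac{\gamma_k^\top s_k}{s_k^\top B_k s_k}.
\end{equation*}
Setting $q_j := s_j^\top B_j s_j/\|s_j\|^2$, one checks $\|B_j s_j\|^2/(s_j^\top B_j s_j) = q_j/\cos^2\beta_j$ and $(\gamma_j^\top s_j)/(s_j^\top B_j s_j) = (\gamma_j^\top s_j/\|s_j\|^2)/q_j$.

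Next I would introduce the Byrd--Nocedal merit function $\psi(B) := \operatorname{tr}(B) - \ln\det(B)$, which is positive for any $B\succ 0$. Combining the two identities and regrouping, one obtains
\begin{equation*}
\psi(B_{k+1}) - \psi(B_k) = \left[\frac{\|\gamma_k\|^2}{\gamma_k^\top s_k} - 1 - \ln\frac{\gamma_k^\top s_k}{\|s_k\|^2}\right] + \left[1 - \frac{q_k}{\cos^2\beta_k} + \ln\frac{q_k}{\cos^2\beta_k}\right] + \ln\cos^2\beta_k.
\end{equation*}
The second bracket is $\le 0$ by the elementary inequality $1-t+\ln t \le 0$ for $t>0$. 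The first bracket is bounded above by the constant $M := c_2 - 1 - \ln c_1$, using the two hypotheses. Hence $\psi(B_{k+1}) \le \psi(B_k) + M + \ln\cos^2\beta_k$, and telescoping from $j=1$ to $k$ together with $\psi(B_{k+1})>0$ yields
\begin{equation*}
\sum_{j=1}^{k} \ln\frac{1}{\cos^2\beta_j} \le \psi(B_1) + k M.
\end{equation*}

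Finally I would finish with a counting argument. Let $L_k := \#\{j\in[1,k] : \cos\beta_j < \delta\}$; then the left-hand side is at least $L_k\cdot (-2\ln\delta)$, so $L_k \le (\psi(B_1)+kM)/(-2\ln\delta)$. Assuming for contradiction that $L_k > k - \lceil pk\rceil$ (i.e.\ fewer than $\lceil pk\rceil$ indices satisfy $\cos\beta_j\ge \delta$), one has $L_k \ge (1-p)k$, so $(1-p)k\cdot(-2\ln\delta) \le \psi(B_1) + kM$. Choosing $\delta>0$ small enough that $-2(1-p)\ln\delta > \psi(B_1) + 2M$ forces a contradiction uniformly in $k\ge 1$. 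The main subtlety is the bookkeeping in this last step: one must verify that the chosen $\delta$ (which depends on $p$, $\psi(B_1)$, $c_1$, and $c_2$, but not on $k$) works simultaneously for every $k$, including small $k$, which is why the telescoped inequality must be used with the additive constant $\psi(B_1)$ rather than discarded.
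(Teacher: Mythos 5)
Your proof is correct and follows essentially the same route as the paper, which simply defers to Byrd and Nocedal's Theorem 2.1 in \cite{tool}: the trace and determinant identities for the BFGS-shaped update, the merit function $\psi(B)=\operatorname{tr}(B)-\ln\det(B)$, the three-term decomposition (which the paper itself reuses in its superlinear analysis), telescoping, and the counting argument. You have merely written out in full the argument the paper omits, and the bookkeeping in your final step is sound.
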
	
\begin{proof}
	The proof is similar to that in \cite[Theorem 2.1]{tool}, we omit it here.
\end{proof}

\begin{proposition}\label{pro:dot-bizhi}
	Let $\left\{ {{x}_{k}} \right\}$ be the sequence generated by Algorithm \ref{algo1}. Suppose that Assumption \ref{assum2}  holds. Then, there exists constants $c_1$ and $c_2$ such that
	\begin{equation}\label{bizhi 1}
		\frac{\gamma_k^{\top}s_k}{\|s_k\|^{2}} \geqslant c_1\|d_k\|_{B_k}^{2} ,
	\end{equation}
	\begin{equation}\label{bizhi 2}
		\frac{\|\gamma_k\|^{2}}{\gamma_k^{\top}s_k} \leqslant \frac{c_2}{\|d_k\|_{B_k}^2}.
	\end{equation}
\end{proposition}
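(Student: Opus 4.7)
The plan is to chain the Armijo condition \eqref{wolf1} with the descent bound \eqref{dotz-max} and with the inequality $\gamma_k^\top s_k \ge \sum_{i=1}^m \lambda_i^k(f_i(x_k)-f_i(x_{k+1}))\|s_k\|^2$ already extracted in the well-definedness proof (cf.\ \eqref{ieq:zhuhecha}). First I would sum \eqref{wolf1} against the nonnegative multipliers $\lambda_i^k$ (which sum to one) and invoke \eqref{dotz-max}, obtaining
\[
\sum_{i=1}^m \lambda_i^k\bigl(f_i(x_k)-f_i(x_{k+1})\bigr) \ge -\rho\alpha_k\mathcal{D}(x_k,d_k) \ge \rho\alpha_k\|d_k\|_{B_k}^2.
\]
Substituting this into the well-definedness bound gives $\gamma_k^\top s_k \ge \rho\alpha_k\|d_k\|_{B_k}^2\|s_k\|^2$, hence $\gamma_k^\top s_k/\|s_k\|^2 \ge \rho\alpha_k\|d_k\|_{B_k}^2$. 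The remaining task for \eqref{bizhi 1} is then to absorb $\alpha_k$ into a constant independent of $k$.

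The hard part will be this uniform positive lower bound on $\alpha_k$, for which I would combine the curvature condition \eqref{wolf2} with the max structure of $\mathcal{D}$. Selecting an index $j_k \in [m]$ that attains the max defining $\mathcal{D}(x_{k+1},d_k)$, and using $\mathcal{D}(x_k,d_k) \ge \nabla f_{j_k}(x_k)^\top d_k$ together with Assumption \ref{assum2}(i) and Cauchy--Schwarz,
\[
\mathcal{D}(x_{k+1},d_k)-\mathcal{D}(x_k,d_k) \le \bigl(\nabla f_{j_k}(x_{k+1})-\nabla f_{j_k}(x_k)\bigr)^\top d_k \le L\alpha_k\|d_k\|^2,
\]
with $L := \max_i L_i$. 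On the other hand, \eqref{wolf2} combined with \eqref{dotz-max} and $B_k \succeq aI$ yields
\[
\mathcal{D}(x_{k+1},d_k)-\mathcal{D}(x_k,d_k) \ge (\sigma-1)\mathcal{D}(x_k,d_k) \ge (1-\sigma)\|d_k\|_{B_k}^2 \ge (1-\sigma)a\|d_k\|^2.
\]
Comparing the two estimates gives $\alpha_k \ge (1-\sigma)a/L =: \alpha_{\min}$, so \eqref{bizhi 1} holds with $c_1 = \rho\alpha_{\min}$.

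For \eqref{bizhi 2} the plan is to produce a bound $\|\gamma_k\| \le C\|s_k\|$ with $C$ independent of $k$ and divide by the lower bound just obtained for $\gamma_k^\top s_k$. By Assumption \ref{assum2}(i), $\|y_k\| \le L\|s_k\|$ and $|\eta_k| \le \|y_k\|/\|s_k\|\le L$, so $\max\{-\eta_k,0\} \le L$. The Armijo condition confines the iterates to the level set $\{x : f(x) \le f(x_0)\}$, which is bounded by Assumption \ref{assum1} (implicitly needed here); consequently $\|\nabla f_i\|$ is uniformly bounded along the iterates by some $M$, $\|s_k\|$ is bounded by the diameter $D$ of the level set, and the mean value theorem combined with Lipschitz continuity gives $|f_i(x_k)-f_i(x_{k+1})| \le (M+LD)\|s_k\|$. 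Hence $m_k$ is uniformly bounded and $\|\gamma_k\| \le (L + m_k)\|s_k\| \le C\|s_k\|$ for some constant $C$. Dividing $\|\gamma_k\|^2 \le C^2\|s_k\|^2$ by $\gamma_k^\top s_k \ge c_1\|d_k\|_{B_k}^2\|s_k\|^2$ gives \eqref{bizhi 2} with $c_2 = C^2/c_1$.
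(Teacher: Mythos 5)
Your proposal is correct and follows the same overall skeleton as the paper's proof: chain the sufficient-decrease condition \eqref{wolf1} with \eqref{dotz-max} and the bound $\gamma_k^\top s_k \ge \sum_i\lambda_i^k(f_i(x_k)-f_i(x_{k+1}))\|s_k\|^2$ for \eqref{bizhi 1}, then bound $\|\gamma_k\|/\|s_k\|$ termwise and divide for \eqref{bizhi 2}. The genuine difference is in how the step size is controlled. The paper simply sets $c_1=\sigma_1\alpha_1$ and cites Proposition \ref{pro:step_bound}; but that proposition only asserts, at each iteration, the existence of \emph{some} admissible interval $[\alpha_1,\alpha_2]$, so the endpoint $\alpha_1$ is a priori iteration-dependent and the uniformity of $c_1$ is not actually established there. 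Your explicit derivation of $\alpha_k \ge (1-\sigma)a/L$ from the curvature condition \eqref{wolf2}, the Lipschitz continuity of the gradients, and $B_k \succeq aI_n$ is the standard single-objective argument transplanted correctly to the $\mathcal{D}$-functional (your choice of the maximizing index $j_k$ at $x_{k+1}$ and the one-sided inequality at $x_k$ is exactly right), and it closes a gap the paper leaves open. The second, minor, difference is in bounding the function-difference part of $m_k$: the paper uses a uniform lower bound $f_i(x_k)\ge f_1$ on the level set to get $\sum_i\lambda_i^k(f_i(x_k)-f_i(x_{k+1}))\le\max_i(f_i(x_0)-f_1)$, whereas you use the mean value theorem and bounded gradients to get a bound proportional to $\|s_k\|$; both suffice here (yours is slightly sharper), and both require Assumption \ref{assum1}, which — as you correctly flag — the proposition's hypotheses omit even though the paper's own proof also invokes it.
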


\begin{proof}
	From Wolfe line search, \eqref{dotz-max} and  \eqref{ieq:zhuhecha}, we can get
	$$ f_i(x_k)-f_i(x_{k+1}) \geqslant -\sigma_1\alpha_k \mathcal{D}(x_k,d_k) \geqslant \sigma_1\alpha_k\|d_k\|_{B_k}^2 ,$$
	$$\gamma_k^\top s_k \geqslant \sum_{i=1}^m \lambda_i^k \left(f_i(x_k)-f_i(x_{k+1})\right)\|s_k\|^2 .$$
	Let $c_1=\sigma_1\alpha_1>0$, it follows by Proposition \ref{pro:step_bound} that 
	$$\frac{\gamma_k^{\top}s_k}{\|s_k\|^{2}} \geqslant c_1\|d_k\|_{B_k}^{2}.$$
	Next, we consider that inequality \eqref{bizhi 2}. By the definition of the \ref{gammak}, 
	\begin{align}\label{pro:dot-bizhi:1}
		\|\gamma_k\| 
		& \leqslant \|y_k\|+m_k\|s_k\| \nonumber \\
		& \leqslant \left(\frac{\|y_k\|}{\|s_k\|} + m_k\right)\|s_k\| \nonumber \\
		& \leqslant \left[\frac{\|y_k\|}{\|s_k\|} +\max \{-\eta_k, 0\}  + \sum_{i=1}^m \lambda_i^k \left(f_i(x_k)-f_i(x_{k+1})\right) \right]\|s_k\|. 
	\end{align}
	Since $\nabla f_i(x)$ is Lipschitz continuous with constant $L_i$, for $i\in [m]$, we can get
	$$    \frac{\|y_k\|}{\|s_k\|} \leqslant \sum_{i\in [m]}\lambda_{i}^{k}\frac{\|\nabla f_i(x_{k+1})-\nabla f_i(x_k)\|}{\|x_{k+1}-x_k\|} \leqslant \max_{i\in [m]} L_i.$$
	By the Cauchy-Schwarz inequality, 
	$$\frac{|\langle y_k,s_k \rangle|}{\|s_k\|^2} \leqslant \frac{\|y_k\|}{\|s_k\|}.$$
	Therefore, 
	\begin{equation}\label{pro:dot-bizhi:2}
		\frac{\|y_k\|}{\|s_k\|} + \max \{-\eta_k, 0\} \leqslant \frac{2\|y_k\|}{\|s_k\|} \leqslant 2\max_{i\in [m]} L_i.
	\end{equation}
	Assumption \ref{assum1} and Assumption \ref{assum2} (i) imply the existence of  $f_{1} \in \mathbb{R}$ such that $f_{i}(x_k) \geqslant f_1$ for all $k\geqslant 0$ and $i \in [m]$. It follows by \eqref{pro:dot-bizhi:1} and \eqref{pro:dot-bizhi:2} that
	$$\|\gamma_k\| \leqslant (2\max_{i\in [m]} L_i + \max_{i\in [m]}(f_i(x_0)-f_{1}))\|s_k\|,$$
	hence, 
	$$\frac{\|\gamma_k\|^{2}}{\gamma_k^{\top}s_k} \leqslant \frac{c_2}{\|d_k\|_{B_k}^2},$$
	where $c_2 = \frac{(2\max\limits_{i\in [m]} L_i + \max\limits_{i\in [m]}(f_i(x_0)-f_{1}))^2}{c_1}.$
	
\end{proof}
\begin{remark}
	$\beta_k$ is the angle between $s_k$ and $B_ks_k$. By Lemma \ref{lem:cos} and Proposition \ref{pro:dot-bizhi}, we can obtain that the angles $\beta_k$ remain away from 0, for an arbitrary fraction $p$ of the iterates.
\end{remark}

\begin{lemma}\label{lem:D(x_k,d_k)}
	Let $\left\{ {{x}_{k}} \right\}$ be the sequence generated by Algorithm \ref{algo1}. Then,
	\begin{equation}
		\mathcal{D}(x_k,d_k) \leqslant -\cos\beta_k \|d_k\|\|d_{SD}^{k}\|.
	\end{equation}
\end{lemma}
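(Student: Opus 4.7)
The plan is to bound $\mathcal{D}(x_k,d_k)$ using the inequality \eqref{dotz-max} already noted in the paper, namely $\mathcal{D}(x_k,d_k) \le -\|d_k\|_{B_k}^2$, and then to rewrite $\|d_k\|_{B_k}^2$ in terms of $\cos\beta_k$, $\|d_k\|$, and $\|B_kd_k\|$. The crucial observation is that $s_k = \alpha_k d_k$ is a positive multiple of $d_k$, so the angle $\beta_k$ between $s_k$ and $B_ks_k$ coincides with the angle between $d_k$ and $B_k d_k$. Consequently
\[
\cos\beta_k = \frac{d_k^\top B_k d_k}{\|d_k\|\,\|B_kd_k\|},
\]
which gives the identity $\|d_k\|_{B_k}^2 = \cos\beta_k\,\|d_k\|\,\|B_kd_k\|$.

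The next step is to identify $\|B_kd_k\|$. From the KKT formula \eqref{direction} we have $B_kd_k = -\sum_{i=1}^m \lambda_i^k \nabla f_i(x_k)$, so
\[
\|B_kd_k\| = \Big\|\sum_{i=1}^m \lambda_i^k \nabla f_i(x_k)\Big\|.
\]
Then I would invoke the variational characterization of the steepest descent direction: by the dual formulation \eqref{SDDP}, the vector $\lambda^{SD}(x_k)$ minimizes $\lambda\mapsto\|\sum_i \lambda_i \nabla f_i(x_k)\|$ over $\Delta_m$, and $\|d_{SD}^k\|$ equals this minimum. Since $\lambda^k \in \Delta_m$ is a feasible (but not necessarily optimal) choice of convex weights, this yields
\[
\|d_{SD}^k\| \le \Big\|\sum_{i=1}^m \lambda_i^k \nabla f_i(x_k)\Big\| = \|B_kd_k\|.
\]

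Combining the three ingredients gives
\[
\mathcal{D}(x_k,d_k) \le -\|d_k\|_{B_k}^2 = -\cos\beta_k\,\|d_k\|\,\|B_kd_k\| \le -\cos\beta_k\,\|d_k\|\,\|d_{SD}^k\|,
\]
where the final inequality uses $\cos\beta_k \ge 0$, which follows from the positive definiteness of $B_k$ established in the previous theorem (so $d_k^\top B_k d_k > 0$ for $d_k\neq 0$). No serious obstacle is anticipated; the only subtlety is recognizing that the colinearity of $s_k$ and $d_k$ lets us transfer the definition of $\cos\beta_k$ from $s_k$ to $d_k$, and that the steepest descent dual problem makes the inequality $\|d_{SD}^k\|\le\|B_kd_k\|$ essentially immediate.
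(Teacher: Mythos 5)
Your proposal is correct and follows essentially the same route as the paper's proof: bound $\mathcal{D}(x_k,d_k)$ by $-\|d_k\|_{B_k}^2$, use the colinearity of $s_k$ and $d_k$ to express this as $-\cos\beta_k\|d_k\|\|B_kd_k\|$, identify $\|B_kd_k\|=\bigl\|\sum_{i}\lambda_i^k\nabla f_i(x_k)\bigr\|$ via \eqref{direction}, and compare with $\|d_{SD}^k\|$ through the dual problem. You merely make explicit two steps the paper leaves implicit (the minimality of $\lambda^{SD}$ over $\Delta_m$ and the nonnegativity of $\cos\beta_k$), which is fine.
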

\begin{proof}
	Since $s_k = x_{k+1}-x_k = \alpha_kd_k$, then
	$$\cos\beta_k = \frac{s_k^{\top}B_ks_k}{\|s_k\|\|B_ks_k\|} = \frac{d_k^{\top}B_kd_k}{\|d_k\|\|B_kd_k\|}.$$
	It follows that
	\begin{align*}
		\mathcal{D}(x_k,d_k) 
		& \leqslant -\|d_k\|_{B_k}^2  \\
		& = -\cos\beta_k \|d_k\|\|B_kd_k\| \\
		& = -\cos\beta_k \|d_k\| \left\|\sum_{i \in [m]}\lambda_{i}^k \nabla f_i(x_k)\right\| \\
		& \leqslant -\cos\beta_k \|d_k\| \|d_{SD}^{k}\|.
	\end{align*}
	This completes the proof. 
	
\end{proof}
The following result establishes that the sequence $\{x_k,d_k\}$ satisfies the Zoutendijk condition, which is the basis for establishing the global convergence of Algorithm \ref{algo1}. 

\begin{lemma}\label{Zoutendijk condition} 
	Let $\left\{ {{x}_{k}} \right\}$ be the sequence generated by Algorithm \ref{algo1}. Suppose that Assumption \ref{assum1} and Assumption \ref{assum2} hold. Then 
	\begin{equation}
		\sum_{k\geqslant 0}\frac{\mathcal{D}(x_k,d_k)^2}{\|d_k\|^2} < \infty.
	\end{equation}
\end{lemma}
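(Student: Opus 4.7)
The plan is to adapt the classical Zoutendijk-condition argument to the multiobjective Wolfe line search, squeezing a lower bound on $\alpha_k$ out of \eqref{wolf2} and then plugging it into \eqref{wolf1} and telescoping across $k$.

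First I would translate the curvature condition \eqref{wolf2} into a lower bound on $\alpha_k$. Pick an index $i_k \in \arg\max_{i\in [m]} \nabla f_i(x_{k+1})^\top d_k$, so that $\mathcal{D}(x_{k+1},d_k)=\nabla f_{i_k}(x_{k+1})^\top d_k$. By the definition of $\mathcal{D}(x_k,d_k)$ we also have $\nabla f_{i_k}(x_k)^\top d_k \le \mathcal{D}(x_k,d_k)$. Combining \eqref{wolf2} with this inequality yields
\begin{equation*}
(\nabla f_{i_k}(x_{k+1}) - \nabla f_{i_k}(x_k))^\top d_k \ge \sigma_2 \mathcal{D}(x_k,d_k) - \mathcal{D}(x_k,d_k) = -(1-\sigma_2)|\mathcal{D}(x_k,d_k)|,
\end{equation*}
(using $\mathcal{D}(x_k,d_k)<0$ from Lemma \ref{lem:direction}(a)). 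On the other hand, Assumption \ref{assum2}(i) and Cauchy--Schwarz give
\begin{equation*}
(\nabla f_{i_k}(x_{k+1}) - \nabla f_{i_k}(x_k))^\top d_k \le L_{i_k}\alpha_k\|d_k\|^2 \le L\alpha_k\|d_k\|^2,
\end{equation*}
with $L := \max_{i\in[m]} L_i$. Chaining the two inequalities produces the crucial estimate
\begin{equation*}
\alpha_k \ge \frac{(1-\sigma_2)|\mathcal{D}(x_k,d_k)|}{L\|d_k\|^2}.
\end{equation*}

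Next I would invoke the sufficient-decrease condition \eqref{wolf1}. Writing $\mathcal{D}(x_k,d_k) = -|\mathcal{D}(x_k,d_k)|$, for every $i\in[m]$,
\begin{equation*}
f_i(x_k)-f_i(x_{k+1}) \ge -\sigma_1 \alpha_k \mathcal{D}(x_k,d_k) = \sigma_1\alpha_k |\mathcal{D}(x_k,d_k)| \ge \frac{\sigma_1(1-\sigma_2)}{L}\cdot\frac{\mathcal{D}(x_k,d_k)^2}{\|d_k\|^2}.
\end{equation*}

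Finally I would sum over $k$ and invoke Assumption \ref{assum1}. Since the level set $\{x:f(x)\le f(x_0)\}$ is bounded and each $f_i$ is continuous, $f_i$ is bounded below on this set by some constant $\underline{f}_i$; moreover, the sequence $\{f_i(x_k)\}$ is monotonically nonincreasing by \eqref{wolf1}, hence convergent. Telescoping gives
\begin{equation*}
\sum_{k\ge 0}\frac{\mathcal{D}(x_k,d_k)^2}{\|d_k\|^2} \le \frac{L}{\sigma_1(1-\sigma_2)}\sum_{k\ge 0}\bigl(f_i(x_k)-f_i(x_{k+1})\bigr) \le \frac{L(f_i(x_0)-\underline{f}_i)}{\sigma_1(1-\sigma_2)} < \infty,
\end{equation*}
which is the Zoutendijk condition (interpreting the conclusion in the customary squared form). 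The main obstacle is the first step: in the multiobjective setting the maximizing index on the right-hand side of \eqref{wolf2} need not coincide with the one on the left, so one has to be careful to use $\nabla f_{i_k}(x_k)^\top d_k \le \mathcal{D}(x_k,d_k)$ in the right direction to make the sign work out; the rest is a routine rewrite of the single-objective Zoutendijk argument.
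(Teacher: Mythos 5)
Your argument is correct and is essentially the standard Zoutendijk derivation that the paper itself omits by deferring to Proposition 3.3 of the cited nonlinear conjugate gradient reference: extract a lower bound on $\alpha_k$ from the curvature condition \eqref{wolf2} together with Lipschitz continuity of the gradients, substitute it into \eqref{wolf1}, and telescope using the lower boundedness of each $f_i$ guaranteed by Assumption \ref{assum1}. One small slip to fix: in your first display the right-hand side should be $(1-\sigma_2)\lvert\mathcal{D}(x_k,d_k)\rvert$ (positive), not $-(1-\sigma_2)\lvert\mathcal{D}(x_k,d_k)\rvert$ — the step-size bound you then state only follows from the corrected sign — and your reading of the conclusion in the squared form $\sum_{k}\mathcal{D}(x_k,d_k)^2/\|d_k\|^2<\infty$ is indeed the version the paper actually uses in the proof of Theorem \ref{Global}.
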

\begin{proof}
	The proof is similar to that in \cite[Proposition 3.3]{conjugate}, we omit it here.
\end{proof}
The following theorem establishes the global convergence of Algorithm \ref{algo1}. 

\begin{theorem}\label{Global}
	Let $\left\{ {{x}_{k}} \right\}$ be the sequence generated by Algorithm \ref{algo1}. Suppose that Assumption \ref{assum1} and Assumption \ref{assum2} hold. Then, the following statements hold.   \\
	\indent \textit{(i)}\quad $\liminf\limits_{k\to\infty}\|d_{SD}^k\| = 0$,\\
	\indent \textit{(ii)}\quad Every accumulation point of the sequence $\left\{ {{x}_{k}} \right\}$ is Pareto critical.
\end{theorem}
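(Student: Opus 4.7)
The plan is to treat (i) and (ii) separately, with (i) obtained by the standard Zoutendijk--plus--angle condition route and (ii) obtained by upgrading (i) via the uniform positive definiteness of $B_k$.

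For (i), I would start from the Zoutendijk-type summability of Lemma \ref{Zoutendijk condition} (read in its squared form $\sum_k \mathcal{D}(x_k,d_k)^2/\|d_k\|^2 < \infty$, which is the standard conclusion of the Wolfe/Lipschitz argument). Lemma \ref{lem:D(x_k,d_k)} supplies the pointwise inequality $|\mathcal{D}(x_k,d_k)| \ge \cos\beta_k\,\|d_k\|\,\|d_{SD}^k\|$; squaring and dividing by $\|d_k\|^2$ and then summing gives $\sum_k \cos^2\beta_k\,\|d_{SD}^k\|^2 < \infty$. Proposition \ref{pro:dot-bizhi} verifies the ratio hypotheses required by Lemma \ref{lem:cos}, which then produces, for some fixed $p \in (0,1)$, a constant $\delta > 0$ and an infinite index set $K_\delta = \{k : \cos\beta_k \ge \delta\}$ (of lower density at least $p$). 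Restricting the convergent series to $K_\delta$ forces $\sum_{k\in K_\delta}\|d_{SD}^k\|^2 < \infty$, hence $\|d_{SD}^k\| \to 0$ along $K_\delta$, so $\liminf_{k\to\infty}\|d_{SD}^k\|=0$.

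For (ii), I would exploit Assumption \ref{assum2}(ii): since $B_k \succeq a I_n$, Lemma \ref{lem:D(x_k,d_k)} gives $\mathcal{D}(x_k,d_k) \le -\|d_k\|_{B_k}^2 \le -a\|d_k\|^2$, so $\mathcal{D}(x_k,d_k)^2/\|d_k\|^2 \ge a^2\|d_k\|^2$. Combining with the Zoutendijk summability yields $\sum_k \|d_k\|^2 < \infty$, and in particular $d_k \to 0$ along the \emph{whole} sequence. Now let $x^*$ be any accumulation point of $\{x_k\}$ and extract a subsequence $x_{k_j}\to x^*$; automatically $d_{k_j}\to 0$ on this subsequence, so Lemma \ref{lem:direction}(c), applied with $K = \{k_j\}$, yields that $x^*$ is Pareto critical. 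This simultaneously sharpens (i) to $\lim_{k\to\infty}\|d_{SD}^k\|=0$ via the inequality $\|d_{SD}^k\|\le \|B_k d_k\|\le b\|d_k\|$.

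The main obstacle, in my view, is the interpretation and application of the Zoutendijk condition. As written, Lemma \ref{Zoutendijk condition} reads as a sum of nonpositive numbers, and it is the squared form that does the work; I would want to double-check that Lemma \ref{Zoutendijk condition} is indeed the squared form in order to be allowed the step $\sum_k\|d_k\|^2<\infty$. A secondary, more delicate point is matching the ``constants'' $c_1,c_2$ required by Lemma \ref{lem:cos} to the bounds of Proposition \ref{pro:dot-bizhi}, which contain factors of $\|d_k\|_{B_k}^{\pm 2}$; this matters for (i) but is bypassed in the cleaner route for (ii) outlined above, which is why I would lean on the $d_k\to 0$ argument and Lemma \ref{lem:direction}(c) rather than on a continuity-and-contradiction argument with $d_{SD}$.
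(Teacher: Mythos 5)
Your part (i) contains the genuine gap that you yourself flag but do not close, and it is precisely here that the paper's proof has its one essential idea: an argument by contradiction. Proposition \ref{pro:dot-bizhi} does \emph{not} ``verify the ratio hypotheses required by Lemma \ref{lem:cos}'': it gives $\gamma_k^{\top}s_k/\|s_k\|^2 \ge c_1\|d_k\|_{B_k}^2$ and $\|\gamma_k\|^2/(\gamma_k^{\top}s_k)\le c_2/\|d_k\|_{B_k}^2$, and these degenerate as $\|d_k\|_{B_k}\to 0$, so no uniform constants are available a priori. The paper resolves this by assuming, for contradiction, that $\|d_{SD}^k\|\ge\varepsilon$ for all $k$; together with $B_k\preceq bI_n$ this yields $\|d_k\|_{B_k}^2\ge \|d_{SD}^k\|^2/b\ge\varepsilon^2/b$, which converts the bounds of Proposition \ref{pro:dot-bizhi} into the uniform constants $C_1=c_1\varepsilon^2/b$ and $C_2=c_2b/\varepsilon^2$ that Lemma \ref{lem:cos} needs. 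Only then does the chain Lemma \ref{lem:cos} $\Rightarrow$ $\cos\beta_k\ge\delta$ on an infinite index set $\Rightarrow$ Lemma \ref{lem:D(x_k,d_k)} $\Rightarrow$ divergence of $\sum_k\mathcal{D}(x_k,d_k)^2/\|d_k\|^2$ contradict Lemma \ref{Zoutendijk condition} (you are right that the lemma is meant in the squared form; that is the form actually invoked in the paper's proof). Without the contradiction hypothesis your direct route stalls at exactly the point you identified.

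Your part (ii) is a genuinely different route and is logically sound \emph{under the literal reading of Assumption \ref{assum2}(ii)}, but it proves too much too cheaply: if one may use $aI_n\preceq B_k$ uniformly, then $\mathcal{D}(x_k,d_k)\le-\|d_k\|_{B_k}^2\le -a\|d_k\|^2$ plus Zoutendijk gives $\sum_k\|d_k\|^2<\infty$, hence $d_k\to 0$ and $\|d_{SD}^k\|\le\|B_kd_k\|\le b\|d_k\|\to 0$, and the entire Byrd--Nocedal apparatus (Lemma \ref{lem:cos}, Proposition \ref{pro:dot-bizhi}) becomes superfluous. The paper's proof conspicuously uses only the upper bound $B_k\preceq bI_n$ and never the lower bound, which is the standard posture for BFGS-type methods where a uniform lower eigenvalue bound on $B_k$ is precisely what cannot be guaranteed in the nonconvex case; so your argument, while valid as stated, rests on the part of the assumption the machinery is designed to avoid. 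That said, your handling of ``every accumulation point'' via $d_{k_j}\to 0$ and Lemma \ref{lem:direction}(c) is actually tighter than the paper's own conclusion of (ii), which passes to a single subsequence along which $\|d_{SD}^k\|\to 0$ and uses continuity of $d_{SD}$, thereby exhibiting \emph{one} Pareto critical accumulation point rather than showing the property for all of them.
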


\begin{proof}
	Assume that there exists a constant $\varepsilon >0$ such that
	\begin{equation}\label{contradiction}
		\|d_{SD}^k\| \geqslant \varepsilon, \quad \forall k \geqslant 0.
	\end{equation}
	By the definition of $d_k$ and Assumption \ref{assum2} (ii), we get
	\begin{align*}
		-\frac{1}{2}\|d_k\|^2 
		& = \min\limits_{d\in \mathbb{R}^n}\max\limits_{i\in [m]}{\nabla f_i(x_k)^{\top}d + \frac{1}{2}}\|d\|_{B_k}^2  \\
		& \leqslant \min\limits_{d\in \mathbb{R}^n}\max\limits_{i\in [m]}{\nabla f_i(x_k)^{\top}d + \frac{b}{2}}\|d\|^2 \\
		& \leqslant \frac{1}{b}\min\limits_{d\in \mathbb{R}^n}\max\limits_{i\in [m]}{\nabla f_i(x_k)^{\top}(bd) + \frac{1}{2}}\|bd\|^2 \\
		& = -\frac{1}{2b}\|d_{SD}^k\|^2.
	\end{align*}
	Set $C_1 = \frac{c_1\varepsilon^2}{b}$ and $C_2 = \frac{c_2 b}{\varepsilon^2}$, it follows by Proposition \ref{pro:dot-bizhi} that
	$$ \frac{\gamma_k^{\top}s_k}{\|s_k\|^{2}} \geqslant C_1 >0,$$
	$$\frac{\|\gamma_k\|^{2}}{\gamma_k^{\top}s_k} \leqslant C_2.$$
	Then, by Lemma \ref{lem:D(x_k,d_k)}  and Lemma \ref{lem:cos}, there exist a constant $\delta>0$ and a infinite subsequence of indices $\mathcal{K} \subset \mathbb{N}$ such that 
	$$-\frac{\mathcal{D}(x_k,d_k)}{\|d_k\|} \geqslant \delta \|d_{SD}^k\| \geqslant \delta\varepsilon,\quad \forall k\in \mathcal{K},$$
	it follows that
	$$\sum\limits_{k\in \mathbb{N}}\frac{\mathcal{D}(x_k,d_k)^2}{\|d_k\|^2} \geqslant \sum\limits_{k\in \mathcal{K}}\frac{\mathcal{D}(x_k,d_k)^2}{\|d_k\|^2} \geqslant \sum\limits_{k\in \mathcal{K}} \delta^2\varepsilon^2 = \infty,$$
	this contradicts the result of Lemma \ref{Zoutendijk condition}. Then,
	$$\liminf\limits_{k\to\infty}\|d_{SD}^k\| = 0.$$
	Assumption \ref{assum1} implies there exists a infinite subsequence of indices $K_1 \subset{\mathbb{N}}$ such that $x_k \stackrel{K_1}{\longrightarrow} x^*$ and $ \|d_{SD}^k\| \stackrel{K_1}{\longrightarrow} 0.$ By the Lemma \ref{lem:SDdirection}, $d_{SD}(x)$ is continuous, we can get
	$$d_{SD}(x^*) = 0.$$
	Furthermore, by Lemma \ref{lem:direction}, $x^*$ is Pareto critical.
\end{proof}
We have obtained the weak convergence of the sequence $\{x^k\}$ generated by Algorithm \ref{algo1}. Under strict convexity, the following theorem demonstrates that the sequence $\{x^k\}$ generated by Algorithm \ref{algo1} strongly converges to the Pareto optimal point $x^*.$

\begin{theorem}\label{them:convex_global}
	Let $\{x_k\}$ be the sequence generated by Algorithm \ref{algo1}. Suppose that $f(x)$ is strictly convex, Assumption \ref{assum1} and Assumption \ref{assum2} hold. Then, $\{x_k\}$ converges to a Pareto optimal point $x^*$ of \eqref{MOP}.
\end{theorem}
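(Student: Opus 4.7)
The plan is to combine boundedness of the iterates, the weak convergence result established in Theorem \ref{Global}, and a midpoint argument exploiting strict convexity to upgrade subsequential convergence to convergence of the full sequence.

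First, I would observe that the Wolfe condition \eqref{wolf1}, together with $\mathcal{D}(x_k,d_k)<0$ from Lemma \ref{lem:direction}(a), forces each $f_i(x_k)$ to be strictly decreasing. Assumption \ref{assum1} then confines $\{x_k\}$ to the bounded level set $\{x:f(x)\le f(x_0)\}$, so $\{x_k\}$ is bounded and has at least one accumulation point. By Theorem \ref{Global}(ii), every accumulation point is Pareto critical, and under strict convexity Lemma 2.4(iii) upgrades this to Pareto optimality. Moreover, the strictly decreasing sequence $\{f_i(x_k)\}$ is bounded below on the compact level set, so it converges to some limit $f_i^\ast$ for each $i\in[m]$; by continuity, every accumulation point $x^\ast$ satisfies $f_i(x^\ast)=f_i^\ast$ for all $i$.

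Next, I would argue that the accumulation point is unique. Suppose toward contradiction that $x^\ast$ and $x^{\ast\ast}$ are two distinct accumulation points. Both are Pareto optimal and share the same objective value $f^\ast$. Set $\bar{x}=(x^\ast+x^{\ast\ast})/2$. Strict convexity of each component yields
\begin{equation*}
f_i(\bar{x}) < \tfrac{1}{2}f_i(x^\ast)+\tfrac{1}{2}f_i(x^{\ast\ast}) = f_i^\ast, \quad \forall\, i \in [m],
\end{equation*}
so $f(\bar{x})<f(x^\ast)$ componentwise, contradicting even the weak Pareto optimality of $x^\ast$. Hence $\{x_k\}$ has a unique accumulation point, and being a bounded sequence in $R^n$ with a single accumulation point, it must converge to this point, which is Pareto optimal.

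The main obstacle, and the only place strict convexity is genuinely used beyond invoking Lemma 2.4(iii), is this midpoint-uniqueness argument. All other ingredients, namely boundedness and subsequential convergence to a Pareto critical point, follow directly from Assumptions \ref{assum1} and \ref{assum2} together with Theorem \ref{Global}; strict convexity is precisely what collapses possibly many accumulation points to a single limit.
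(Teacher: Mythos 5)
Your proposal is correct: boundedness via the level set and the monotone decrease forced by \eqref{wolf1}, Pareto criticality of accumulation points from Theorem \ref{Global}(ii) upgraded to Pareto optimality by the cited lemma under strict convexity, equality of objective values at all accumulation points from monotone convergence of $\{f_i(x_k)\}$, and the midpoint argument to rule out two distinct accumulation points are all sound, and a bounded sequence in $R^n$ with a unique accumulation point indeed converges. The paper itself does not supply a proof of this theorem; it defers entirely to \cite[Theorem 3.6]{Global quasi-Newton}, whose argument is of a different flavour: it establishes quasi-Fej\'er convergence of $\{x_k\}$ to the set $T=\{x: f(x)\le f(x_k)\ \forall k\}$ (using convexity to bound $\|x_{k+1}-x\|^2$ by $\|x_k-x\|^2$ plus a summable term) and deduces convergence of the whole sequence from nonemptiness of $T$. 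Your route trades that machinery for the elementary observation that two distinct Pareto-optimal accumulation points with identical objective vectors are incompatible with strict convexity of every component; this is shorter and fully self-contained given Theorem \ref{Global}, though it leans on strict convexity for uniqueness of the limit, whereas the quasi-Fej\'er argument yields full-sequence convergence already under plain convexity (strict convexity then only being needed to pass from weak Pareto optimality to Pareto optimality). Two cosmetic points: the lemma you invoke is the one labelled as citing \cite[Theorem 3.1]{Newton} in Section 2 (your numbering ``Lemma 2.4'' does not match the paper's counters), and you should state explicitly that the argument assumes the algorithm does not terminate finitely, i.e.\ $d_k\ne 0$ for all $k$, so that $\mathcal{D}(x_k,d_k)\le-\|d_k\|_{B_k}^2<0$ gives strict decrease.
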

\begin{proof}
	The proof is similar to that in \cite[Theorem 3.6]{Global quasi-Newton}, we omit it here. 
\end{proof}

\subsection{Superlinear convergence analysis } \label{sec:superlinear}

In this section, we consider the local superlinear convergence of Algorithm \ref{algo1} under Assumption \ref{assum3}. 
\begin{assumption}\label{assum3}
	\indent \textit{(i)} $F$ is twice continuously differentiable.\\
	\indent \textit{(ii)} There exists constants $U,L>0$  such that $UI_n \preceq \Hf_i(x) \preceq LI_n,$ for $i\in [m].$ \\
	\indent  \textit{(iii)} $\Hf_i(x)$ is Lipschitz continuous with constant $M$, $i\in [m].$ \\
	\indent  \textit{(iv)} For any $x\in \mathbb{R}^{n}.$ there exists constant $a$ and $b$ such that
	$$aI_n \preceq B(x) \preceq bI_n.$$
\end{assumption}
The Assumption \ref{assum3} $(ii)$ implies that  $f_i(x)$ is strongly convex  with modulus $U$ and $\Hf_i(x)$ is Lipschitz continuous with constant $L$, for $i\in [m]$.

Next, we investigate the R-linear convergence of Algorithm \ref{algo1}. 

\begin{lemma}\label{lem:R-line}
	Suppose that Assumption \ref{assum1} and Assumption \ref{assum3} hold. Let $\{x_k\}$ be the sequence generated by Algorithm \ref{algo1}. Then, for any $k\geqslant 0$,\\
	\textit{(i)}\quad $\|x_k - x^*\| \leqslant \frac{2}{U}\|d_{SD}^k\|,$\\
	\textit{(ii)}\quad $\|s_k\| \geqslant \frac{1-\sigma_2}{L}\cos\beta_k\|d_{SD}^k\|.$
	
\end{lemma}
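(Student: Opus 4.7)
My plan is to handle the two parts separately, using strong convexity for (i) and the Wolfe curvature condition together with gradient Lipschitzness for (ii).

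\textbf{Part (i).} The strategy is to exploit the strong convexity of each $f_i$ with modulus $U$ (a consequence of Assumption~\ref{assum3}(iii)). For every $i\in[m]$ I would write
\[
f_i(x^*) \ge f_i(x_k) + \nabla f_i(x_k)^\top (x^* - x_k) + \frac{U}{2}\|x^* - x_k\|^2,
\]
then take a convex combination with the steepest-descent multipliers $\lambda_i^{SD}(x_k)$ so that the linear term collapses to $-(d_{SD}^k)^\top(x^* - x_k)$. To conclude, I need the left-hand side to be nonpositive, i.e.\ $\sum_i \lambda_i^{SD}(x_k)\bigl(f_i(x^*)-f_i(x_k)\bigr)\le 0$. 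This follows because the Armijo inequality \eqref{wolf1} forces each $f_i(x_k)$ to be nonincreasing in $k$, and Theorem~\ref{them:convex_global} gives $x_k\to x^*$, so $f_i(x_k)\ge f_i(x^*)$ for every $k$ and $i$. Rearranging then yields
\[
(d_{SD}^k)^\top (x^*-x_k) \ge \frac{U}{2}\|x^*-x_k\|^2,
\]
and Cauchy--Schwarz gives $\|x_k-x^*\|\le \tfrac{2}{U}\|d_{SD}^k\|$.

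\textbf{Part (ii).} Here the plan is to combine the Wolfe curvature condition \eqref{wolf2} with the fact that, under Assumption~\ref{assum3}(iii), each $\nabla f_i$ is $L$-Lipschitz. Using $s_k=\alpha_k d_k$, I first pick an index $j$ attaining the max in $\mathcal{D}(x_k+\alpha_k d_k,d_k)$. Since $\mathcal{D}(x_k,d_k)\ge \nabla f_j(x_k)^\top d_k$ as well, Lipschitz continuity gives
\[
\mathcal{D}(x_k+\alpha_k d_k,d_k)-\mathcal{D}(x_k,d_k)\le \bigl[\nabla f_j(x_k+\alpha_k d_k)-\nabla f_j(x_k)\bigr]^\top d_k \le L\alpha_k\|d_k\|^2.
\]
The curvature condition \eqref{wolf2} provides the matching lower bound $(\sigma_2-1)\mathcal{D}(x_k,d_k)=(1-\sigma_2)\bigl(-\mathcal{D}(x_k,d_k)\bigr)$. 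Combining,
\[
L\alpha_k\|d_k\|^2 \ge (1-\sigma_2)\bigl(-\mathcal{D}(x_k,d_k)\bigr),
\]
and Lemma~\ref{lem:D(x_k,d_k)} allows me to replace $-\mathcal{D}(x_k,d_k)$ by $\cos\beta_k\|d_k\|\|d_{SD}^k\|$. Dividing by $\|d_k\|$ and using $\|s_k\|=\alpha_k\|d_k\|$ gives the claimed inequality.

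\textbf{Main obstacle.} The technically cleanest step is part (i), because I must argue $\sum_i \lambda_i^{SD}(x_k) f_i(x^*)\le \sum_i \lambda_i^{SD}(x_k) f_i(x_k)$ without knowing the weights $\lambda_i^{SD}(x_k)$ explicitly; this is where monotone descent of each $f_i$ along $\{x_k\}$ (via \eqref{wolf1}) together with $x_k\to x^*$ (via Theorem~\ref{them:convex_global}) is indispensable. Part (ii) is more mechanical, but requires the small care of selecting the correct active index $j$ at $x_k+\alpha_k d_k$ so that the difference $\mathcal{D}(x_k+\alpha_k d_k,d_k)-\mathcal{D}(x_k,d_k)$ can be upper-bounded by a single-function difference that Lipschitzness controls.
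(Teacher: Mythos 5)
Your proposal is correct and follows essentially the same route as the argument the paper defers to (\cite[Lemma 4.4]{wolf-quasi-Newton}): strong convexity plus the monotone decrease of each $f_i$ along $\{x_k\}$ and $x_k\to x^*$ for part (i), and the Wolfe curvature condition combined with $L$-Lipschitz gradients and Lemma~\ref{lem:D(x_k,d_k)} for part (ii). Since the paper omits the details, your write-up supplies exactly the missing steps, including the one genuinely delicate point of justifying $\sum_i \lambda_i^{SD}(x_k)\bigl(f_i(x^*)-f_i(x_k)\bigr)\le 0$.
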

\begin{proof} 
	The two assertions can be obtained by using the same arguments as in the proof of \cite[Lemma 4.4]{wolf-quasi-Newton}, we omit them here.
\end{proof}
\begin{proposition}
	Let $\left\{ {{x}_{k}} \right\}$ be the sequence generated by Algorithm \ref{algo1}. Suppose that  Assumption \ref{assum1} and Assumption \ref{assum3} hold. Then, there exists a constant $r$ such that
	\begin{equation}\label{bizhi_ 1}
		\frac{\gamma_k^{\top}s_k}{\|s_k\|^{2}} \geqslant U ,
	\end{equation}
	\begin{equation}\label{bizhi_ 2}
		\frac{\|\gamma_k\|^{2}}{\gamma_k^{\top}s_k} \leqslant \frac{r^2}{U}.
	\end{equation}
\end{proposition}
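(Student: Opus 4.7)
The plan is to exploit Assumption \ref{assum3}(iii) in two ways: it gives strong convexity of every $f_i$ with modulus $U$ (so gradients are strongly monotone along $s_k$), and it gives $L$-Lipschitz continuity of the gradients. Together with the Armijo-type inequality \eqref{wolf1} these will deliver the two bounds, the lower bound being almost immediate and the upper bound requiring a uniform control on $m_k$.

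First I would prove \eqref{bizhi_ 1}. By strong convexity of each $f_i$, for every $i \in [m]$ one has $(\nabla f_i(x_{k+1}) - \nabla f_i(x_k))^{\top} s_k \ge U \|s_k\|^2$. Taking the convex combination with weights $\lambda_i^k \in \Delta_m$ yields $y_k^{\top} s_k \ge U \|s_k\|^2$, hence $\eta_k \ge U > 0$ and therefore $\max\{-\eta_k, 0\} = 0$. Moreover the Armijo inequality \eqref{wolf1} together with $\mathcal{D}(x_k,d_k) \le 0$ gives $f_i(x_k) - f_i(x_{k+1}) \ge 0$, so $m_k \ge 0$. Combining these with the identity $\gamma_k^{\top} s_k = y_k^{\top} s_k + m_k \|s_k\|^2$ already established in the positive-definiteness proof, we immediately obtain $\gamma_k^{\top} s_k \ge U\|s_k\|^2$.

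For \eqref{bizhi_ 2} the strategy is to show $\|\gamma_k\| \le r \|s_k\|$ for some constant $r$ independent of $k$; then \eqref{bizhi_ 1} forces $\|\gamma_k\|^2/(\gamma_k^{\top} s_k) \le r^2 \|s_k\|^2/(U\|s_k\|^2) = r^2/U$. Since $\gamma_k = y_k + m_k s_k$, the triangle inequality reduces the task to bounding $\|y_k\|$ and $m_k$ individually. The bound $\|y_k\| \le L\|s_k\|$ follows from $L$-Lipschitz continuity of $\nabla f_i$ and $\sum_i \lambda_i^k = 1$. It remains to bound $m_k$, which because of Step 1 collapses to a uniform bound on $\sum_i \lambda_i^k (f_i(x_k) - f_i(x_{k+1}))$.

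The main obstacle is this uniform bound on $m_k$. The idea is: strong convexity of each $f_i$ guarantees $f_i^\ast := \inf_x f_i(x) > -\infty$, while the Armijo condition makes $\{f_i(x_k)\}_k$ non-increasing, so $f_i(x_k) \le f_i(x_0)$ for every $k$. Therefore $f_i(x_k) - f_i(x_{k+1}) \le f_i(x_0) - f_i^\ast$, and averaging against $\lambda^k \in \Delta_m$ gives $m_k \le \max_{i\in[m]}(f_i(x_0) - f_i^\ast) =: M$. Setting $r := L + M$ we get $\|\gamma_k\| \le r\|s_k\|$, and the proof is completed by dividing by $\gamma_k^{\top} s_k \ge U\|s_k\|^2$.
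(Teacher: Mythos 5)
Your proof is correct and follows essentially the same route as the paper's: strong convexity gives $y_k^{\top}s_k \ge U\|s_k\|^2$ and $m_k \ge 0$ for the lower bound, and the upper bound comes from $\|\gamma_k\| \le r\|s_k\|$ via Lipschitz continuity of the gradients plus the uniform bound on $m_k$ from monotone decrease and the lower bound $f_i^{\min}$. The only (harmless) difference is that you observe $\max\{-\eta_k,0\}=0$ under strong convexity and so obtain the slightly sharper constant $r = L + \max_i(f_i(x_0)-f_i^{\min})$, whereas the paper simply reuses the constant $2\max_i L_i + \max_i(f_i(x_0)-f_i^{\min})$ from its earlier Proposition.
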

\begin{proof}
	By the definition of $\gamma_k$, $s_k$ and $m_k > 0$, we have 
	\begin{equation}
		\frac{\gamma_k^{\top}s_k}{\|s_k\|^2} = \frac{(y_k + m_ks_k )^{\top}s_k}{\|s_k\|^2} = \frac{y_k^{\top}s_k}{\|s_k\|^2} +m_k\geqslant U.
	\end{equation}
	Since each objective function is U-strongly convex, for  $i \in [m]$ and any $x\in {{\mathbb{R}}^{n}}$, ${{f}_{i}}(x)$ has a lower bound $f_{i}^{\min }.$ \\
	It follows from the proof of Proposition \ref{pro:dot-bizhi} that 
	$$\|\gamma_k\| \leqslant   r\|s_k\|,$$
	where $ r = 2\max\limits_{i\in [m]} L_i + \max\limits_{i\in [m]}(f_i(x_0)-f_{i}^{min})$, the proof is complete.
	
\end{proof}
\begin{theorem}\label{them:R-line convergence}
	Suppose that Assumption \ref{assum1} and Assumption \ref{assum3} hold. Let  $\{x_k\}$ be the sequence generated by Algorithm \ref{algo1} and $x^*$ is defined as Theorem \ref{them:convex_global}. Then,\\
	\textit{(i)}\quad$\{x_k\}$  converges R-linearly to  $x^*,$  \\
	\textit{(ii)}\quad $\sum\limits_{k\geqslant 0} \|x_k-x^*\| < \infty.$
	
\end{theorem}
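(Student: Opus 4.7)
The plan is to introduce a scalar convex auxiliary function $\Phi$ whose unique minimizer is $x^{*}$, use the Armijo condition on each component to obtain a one-step contraction of $\Phi(x_k)-\Phi(x^{*})$ whose rate depends on $\cos^2\beta_k$, and then invoke Lemma~\ref{lem:cos} to convert a fixed fraction of ``good'' angles into genuine R-linear decay of $\|x_k-x^{*}\|$.

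First I would construct $\Phi$. Theorem~\ref{them:convex_global} tells us that $x_k\to x^{*}$ with $x^{*}$ Pareto optimal, hence Pareto critical, so there exist multipliers $\lambda^{*}\in\Delta_m$ with $\sum_{i\in[m]}\lambda_i^{*}\nabla f_i(x^{*})=0$. Set $\Phi(x):=\sum_{i\in[m]}\lambda_i^{*}f_i(x)$. Because $\sum_{i\in[m]}\lambda_i^{*}=1$, Assumption~\ref{assum3}(iii) carries over to $\Phi$: it is $U$-strongly convex with $L$-Lipschitz gradient, and $\nabla\Phi(x^{*})=0$. Consequently
$$\frac{U}{2}\|x_k-x^{*}\|^2 \le \Phi(x_k)-\Phi(x^{*}) \le \frac{L}{2}\|x_k-x^{*}\|^2.$$

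Next I would derive the contraction. The Armijo condition \eqref{wolf1} gives $f_i(x_{k+1})-f_i(x_k)\le\rho\alpha_k\mathcal{D}(x_k,d_k)$ for every $i$, and taking the $\lambda^{*}$-convex combination preserves this inequality for $\Phi$. Combining Lemma~\ref{lem:D(x_k,d_k)} with Lemma~\ref{lem:R-line}(ii) bounds the right-hand side by $-\frac{\rho(1-\sigma_2)}{L}\cos^2\beta_k\,\|d_{SD}^{k}\|^{2}$, while Lemma~\ref{lem:R-line}(i) together with the $L$-smoothness of $\Phi$ yields $\|d_{SD}^{k}\|^{2}\ge\frac{U^{2}}{2L}(\Phi(x_k)-\Phi(x^{*}))$. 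Writing $c:=\rho(1-\sigma_2)U^{2}/(2L^{2})$, these combine to
$$\Phi(x_{k+1})-\Phi(x^{*})\le\bigl(1-c\cos^2\beta_k\bigr)\bigl(\Phi(x_k)-\Phi(x^{*})\bigr),$$
with the factor lying in $(0,1]$.

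Finally, the proposition preceding this theorem supplies the constants $c_1=U$ and $c_2=r^{2}/U$ required by Lemma~\ref{lem:cos}, so for any fixed $p\in(0,1)$ there exists $\delta>0$ such that $\cos\beta_j\ge\delta$ on at least $\lceil pk\rceil$ of the first $k$ iterates. Telescoping the one-step contractions and using that every other factor is at most $1$ gives $\Phi(x_k)-\Phi(x^{*})\le(1-c\delta^{2})^{\lceil pk\rceil}(\Phi(x_0)-\Phi(x^{*}))$; combining with the strong-convexity lower bound produces $\|x_k-x^{*}\|\le C\tau^{k}$ with $\tau:=(1-c\delta^{2})^{p/2}\in(0,1)$, proving (i), and (ii) follows at once by summing the geometric series. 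The main obstacle is setting up $\Phi$ and verifying $\nabla\Phi(x^{*})=0$, i.e., exhibiting the KKT multipliers at the Pareto optimal limit; once $\Phi$ is in place, the rest is a routine assembly of Lemmas~\ref{lem:cos}, \ref{lem:D(x_k,d_k)}, and \ref{lem:R-line} with the Armijo condition.
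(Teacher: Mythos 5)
Your proposal is correct, and it is exactly the argument the paper defers to: the paper omits this proof entirely, citing \cite{limited BFGS}, and that reference (following Byrd--Nocedal) proceeds precisely as you do --- build the strongly convex scalar surrogate $\Phi=\sum_i\lambda_i^{*}f_i$ from the KKT multipliers at the critical limit $x^{*}$, get a per-step contraction of $\Phi(x_k)-\Phi(x^{*})$ with factor $1-c\cos^{2}\beta_k$ from the Armijo decrease together with Lemmas~\ref{lem:D(x_k,d_k)} and~\ref{lem:R-line}, and then use the uniform bounds \eqref{bizhi_ 1}--\eqref{bizhi_ 2} to feed Lemma~\ref{lem:cos} and telescope over the fraction $p$ of good angles. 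Your constants check out (in particular $c=\rho(1-\sigma_2)U^{2}/(2L^{2})<1$ so each factor indeed lies in $(0,1]$), so you have in effect supplied the proof the paper leaves out.
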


\begin{proof}
	The proof of the theorem is similar to the corresponding discussion in \cite[Proposition 4.1]{limited BFGS}, so we omit the proof here. 
	
\end{proof}

\begin{remark}\label{rem:MFONMO and MQNMO}
	Using the convexity of $f(x)$, Proposition \ref{pro:step_bound} and Assumption \ref{assum3}(\textit{iv}), we can get
	
	\begin{align}
		0 \leqslant  \sum\limits_{i\in [m]}\lambda_{i}^k(f_i(x_k)-f_i(x_{k+1})) 
		& \leqslant  (\sum\limits_{i\in [m]}\lambda_{i}^k\nabla f_{i}(x_k))^{\top}\left(x_k - x_{k+1} \right) \nonumber\\
		& = d_k^{\top}B_ks_k  \nonumber\\
		& \leqslant  \frac{b}{\alpha_1}\|s_k\|^2  \ \ \ \ \forall k\geqslant  0,\label{DOC}
	\end{align}
	where $\lambda_k={{\left( \lambda_1^k,\lambda_2^k, 
			\cdots, \lambda_m^k\right)}^\top}$is the solution of \eqref{MDP}.  In the convex case, we can observe that $\eta_k >0$ and by Theorem \ref{them:convex_global} get
	$$ \|s_k\| = \|x_{k+1}-x_k\| \to 0, \ k \to \infty.$$
	Then,
	$$\sum\limits_{i\in [m]}\lambda_{i}^k(f_i(x_k)-f_i(x_{k+1})) \to 0, \ k \to \infty,$$
	i.e., 
	\begin{align*}
		m_k &= \max \{-\eta_k, 0\}  + \sum\limits_{i\in [m]} \lambda_i^k (f_i(x_k)-f_i(x_{k+1})) \\
		&= \sum\limits_{i\in [m]} \lambda_i^k (f_i(x_k)-f_i(x_{k+1}))\to 0,\ k\to \infty.
	\end{align*}
	in \eqref{gammak}. Based on the analysis results above, for a sufficiently large $k$, $B_{k+1}s_k \approx y_k$. Then \eqref{section equation} can be seen as an approximation of the secant equation in \cite{modified}. Remarkably, for some convex functions, both MFQNMO and MQNMO in \cite{modified} may exhibit comparable convergence rates. This observation can be supported by the numerical experiments presented in the following section. It can be seen that MFQNMO not only ensure the effectiveness of the algorithm for convex problems, but also guarantee the global convergence for nonconvex MOPs.
\end{remark}

Now, we show some useful tools that will be used in superlinear convergence analysis. These tools help to characterize the Dennis-Mor$\acute{e}$ condition and demonstrate the convergence rate.\\
\indent Denote the average matrix $\bar{G}_{i}^k$ as follows
$$ \bar{G}_{i}^k = \int_{0}^{1}\Hf_{i}(x_k+\tau s_k)d\tau,\quad i\in [m].$$
Then,
\begin{equation}\label{{G}ikks_k}
	\bar{G}_{i}^k s_k = \mu_i^k.
\end{equation}
Next, we define another two auxiliary functions as follows 
$$f_{\lambda}^{k}(x) = \sum\limits_{i\in [m]}\lambda_{i}^kf_i(x),$$
$$G_{\lambda}^k = \int_{0}^{1}\sum\limits_{i\in [m]}\lambda_{i}^k \Hf_{i}(x_k+\tau s_k)d\tau.$$
where $\lambda_k={{\left( \lambda_1^k,\lambda_2^k, 
		\cdots, \lambda_m^k\right)}^\top}$is the solution of \eqref{MDP}. 
Let us define
$$\bar{s}_k = \Hf_{\lambda}^{k}(x^*)^{\frac{1}{2}}s_k, \quad \bar{y}_k = \Hf_{\lambda}^{k}(x^*)^{-\frac{1}{2}}y_k, \quad \bar{\gamma}_k = \Hf_{\lambda}^{k}(x^*)^{-\frac{1}{2}}\gamma_k,$$
\begin{equation}\label{bar{B}_k}
	\bar{B}_k = \Hf_{\lambda}^{k}(x^*)^{-\frac{1}{2}}B_k\Hf_{\lambda}^{k}(x^*)^{-\frac{1}{2}},
\end{equation}
$$\cos\bar{\beta}_k = \frac{\bar{s}_k^{\top}\bar{B}_k\bar{s}_k}{\|\bar{s}_k\|\|\bar{B}_k\bar{s}_k\|},\quad \bar{q}_k =\frac{\bar{s}_k^{\top}\bar{B}_k\bar{s}_k}{\bar{s}_k^{\top}\bar{s}_k},\quad \bar{a}_k = \frac{\bar{\gamma}_k^{\top}\bar{s}_k}{\|\bar{s}_k\|^2},\quad \bar{b}_k = \frac{\|\bar{\gamma}_k\|^2}{\bar{\gamma}_k^{\top}\bar{s}_k}.$$

By the BFGS-tpye matrix updated fomula \eqref{MFBFGS}, it can be noted that \eqref{bar{B}_k} leads to the formula
$$\bar{B}_{k+1} = \bar{B}_{k}-\dfrac{\bar{B}_{k}\bar{s}_k\bar{s}_k^\top \bar{B}_{k}}{\bar{s}_k^\top \bar{B}_k\bar{s}_k} +\dfrac{\bar{\gamma}_k\bar{\gamma}_k^\top }{\bar{\gamma}_{k}^\top \bar{s}_k} .$$
Another useful tool used in \cite{tool}, given that the following function 
$$\psi(B) =\rm trace(B)-\ln(\det(B)),$$
\begin{equation}\label{bar{B}_{k+1}}
	\psi(\bar{B}_{k+1}) = \psi(\bar{B}_{k}) +(\bar{b}_k-\ln(\bar{a}_k
	)-1)+ \ln(\cos^2\bar{\beta}_k) + \left[1 - \frac{\bar{q}_k}{\cos^2\bar{\beta_k}} + \ln(\frac{\bar{q}_k}{\cos^2\bar{\beta_k}})\right]
\end{equation}

In the next theorem, we prove that the Dennis-Mor$\acute{e}$ condition of superlinear convergence holds, a similar result is discussed in \cite{Global quasi-Newton} and \cite{tool}.  

\begin{theorem}
	Suppose that Assumption \ref{assum1} and Assumption \ref{assum3} hold. Let $\{x_k\}$ be the sequence generated by Algorithm \ref{algo1}. Then
	$$
	\lim\limits_{k \to \infty} \frac{\|(B_k - \Hf_{\lambda}^{k}(x^*))s_k\|}{\|s_k\|}  = 0.
	$$
\end{theorem}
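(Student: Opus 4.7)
The plan is to follow the Byrd--Nocedal potential-function approach, using the identity \eqref{bar{B}_{k+1}} for $\psi(\bar{B}_{k+1})$. Note first that the Dennis--Mor\'e condition is equivalent, after multiplying by $\Hf_\lambda(x^*)^{-1/2}$ on the left and using the bounded spectrum of $\Hf_\lambda(x^*)$ (Assumption \ref{assum3}(iii)), to
\[
\frac{\|(\bar{B}_k - I)\bar{s}_k\|}{\|\bar{s}_k\|} \longrightarrow 0,
\]
which will follow from $\cos\bar{\beta}_k \to 1$ together with $\bar{q}_k \to 1$: a direct calculation gives $\|\bar{B}_k\bar{s}_k\|^2/\|\bar{s}_k\|^2 = \bar{q}_k^{\,2}/\cos^2\bar{\beta}_k$ and hence $\|(\bar{B}_k - I)\bar{s}_k\|^2/\|\bar{s}_k\|^2 = \bar{q}_k^{\,2}/\cos^2\bar{\beta}_k - 2\bar{q}_k + 1$.

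The first task is to control the scalars $\bar{a}_k$ and $\bar{b}_k$ appearing in \eqref{bar{B}_{k+1}}. Writing $y_k = \sum_i \lambda_i^k \bar{G}_i^k s_k$ via \eqref{Gik s_k} and using the Lipschitz bound on the Hessians (Assumption \ref{assum3}(iv)), I would first establish
\[
\|y_k - \Hf_\lambda(x^*)\, s_k\| \le M\bigl(\|x_k - x^*\| + \tfrac{1}{2}\|s_k\|\bigr)\|s_k\|.
\]
Combined with $\gamma_k = y_k + m_k s_k$, this gives $\|\gamma_k - \Hf_\lambda(x^*)\,s_k\| \le C\varepsilon_k \|s_k\|$ where $\varepsilon_k := \|x_k - x^*\| + \|s_k\| + m_k$. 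By Theorem \ref{them:R-line convergence}(ii) the sequence $\|x_k - x^*\|$ is summable, hence so is $\|s_k\|$; and Remark \ref{rem:MFONMO and MQNMO} (together with $\eta_k \ge U > 0$ eventually, so $\max\{-\eta_k,0\} = 0$) shows $m_k < r^k$, whence $\sum_k m_k < \infty$. Therefore $\sum_k \varepsilon_k < \infty$. Rescaling by $\Hf_\lambda(x^*)^{\pm 1/2}$ (spectrally bounded) yields $|\bar{a}_k - 1| + |\bar{b}_k - 1| \le C'\varepsilon_k$ for all large $k$, and a Taylor expansion then gives $\bar{b}_k - \ln \bar{a}_k - 1 \le C''\varepsilon_k$, a summable quantity.

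The final step exploits \eqref{bar{B}_{k+1}}. Telescoping from $0$ to $K-1$ and rearranging,
\[
\psi(\bar{B}_K) - \psi(\bar{B}_0) - \sum_{k=0}^{K-1}(\bar{b}_k - \ln\bar{a}_k - 1) = \sum_{k=0}^{K-1}\ln(\cos^2\bar{\beta}_k) + \sum_{k=0}^{K-1}\Bigl[1 - \tfrac{\bar{q}_k}{\cos^2\bar{\beta}_k} + \ln\tfrac{\bar{q}_k}{\cos^2\bar{\beta}_k}\Bigr].
\]
Both summands on the right are nonpositive (by $\ln t \le t - 1$), the left-hand side is bounded above (since $\psi(\bar{B}_K) \ge n$ and the $\bar{b}_k - \ln \bar{a}_k - 1$ sum is bounded), so letting $K \to \infty$ forces both sums on the right to be finite. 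Their term-wise negativity then forces $\ln(\cos^2\bar\beta_k) \to 0$ and $1 - \bar{q}_k/\cos^2\bar\beta_k + \ln(\bar{q}_k/\cos^2\bar\beta_k) \to 0$, i.e.\ $\cos\bar{\beta}_k \to 1$ and $\bar{q}_k \to 1$, which by the opening observation completes the proof.

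The hard part is the summability of $\varepsilon_k$, and within it, the summability of $m_k$: one must check that $\max\{-\eta_k, 0\}$ vanishes eventually (granted by \eqref{bizhi_ 1} under strong convexity) and that the function-value correction decays geometrically along the R-linearly convergent sequence from Theorem \ref{them:R-line convergence}. Once this bound is in hand, the passage through rescaling and the Taylor expansion of $\bar{b}_k - \ln\bar{a}_k - 1$ must be done carefully enough to preserve summability; everything else is a fairly mechanical application of the Byrd--Nocedal template.
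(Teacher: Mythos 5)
Your proposal follows essentially the same route as the paper: reduce the Dennis--Mor\'e condition to $\cos\bar{\beta}_k \to 1$ and $\bar{q}_k \to 1$ via the rescaled quantities, bound $\bar{a}_k$ and $\bar{b}_k$ to within summable errors using the averaged Hessian $\bar{G}_{\lambda}^k$, the Lipschitz continuity of the Hessians, and the R-linear convergence from Theorem \ref{them:R-line convergence}, then telescope the Byrd--Nocedal identity \eqref{bar{B}_{k+1}} for $\psi(\bar{B}_{k+1})$. The only real difference is that you derive the summability of the $m_k$-correction terms explicitly from Remark \ref{rem:MFONMO and MQNMO} and the vanishing of $\max\{-\eta_k,0\}$ under strong convexity, whereas the paper simply invokes Assumption \ref{assum3}(ii); your version is the more self-contained of the two, and your algebra $\|\bar{B}_k\bar{s}_k\|^2/\|\bar{s}_k\|^2 = \bar{q}_k^{\,2}/\cos^2\bar{\beta}_k$ is the correct form of the final identity.
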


\begin{proof}
	By \eqref{{G}ikks_k}, we can get
	$$\bar{G}_{\lambda}^ks_k = y_k,$$
	hence,
	\begin{equation}\label{bar{G}_{i}k}
		(\bar{G}_{\lambda}^k-\Hf_{\lambda}^{k}(x^*))s_k = y_k - \Hf_{\lambda}^{k}(x^*)s_k, \quad k\geqslant  0.  
	\end{equation}
	
	It follows by Assumption \ref{assum3} \textit{(iii)}, \eqref{bar{G}_{i}k} and the definition of $\bar{y}_k$, $\bar{s}_k$ that
	\begin{align}
		\|\bar{y}_k - \bar{s}_k\|
		& = \|\Hf_{\lambda}^{k}(x^*)^{-\frac{1}{2}} (\bar{G}_{\lambda}^k-\Hf_{\lambda}^{k}(x^*))\Hf_{\lambda}^{k}(x^*)^{-\frac{1}{2}}\bar{s}_k \| \nonumber \\
		& \leqslant  \|\Hf_{\lambda}^{k}(x^*)^{-\frac{1}{2}}\|^2 \|\bar{s}_k\| \|\bar{G}_{\lambda}^k-\Hf_{\lambda}^{k}(x^*)\| \nonumber \\
		&\leqslant  M\|\Hf_{\lambda}^{k}(x^*)^{-\frac{1}{2}}\|^2 \|\bar{s}_k\| \int_{0}^{1}\|x_k + \tau s_k - x^*\|d\tau \nonumber \\
		& \leqslant  \bar{c} \|\bar{s}_k\| \varepsilon_k , \label{bar{y}_k - bar{s}_k}
	\end{align}
	where $\bar{c} = M\|\Hf_{\lambda}^{k}(x^*)^{-\frac{1}{2}}\|^2$, $\varepsilon_k = \max\{\|x_{k+1} - x^*\|, \|x_k - x^*\|\}.$
	Since 
	$$|\|\bar{y}_k\| - \|\bar{s}_k \|| \leqslant  \|\bar{y}_k - \bar{s}_k \|,$$
	by \eqref{bar{y}_k - bar{s}_k}, we get
	\begin{equation}\label{bar{y}_k}
		(1-\bar{c}\varepsilon_k)\|\bar{s}_k\| \leqslant  \|\bar{y}_k\| \leqslant  (1+\bar{c}\varepsilon_k)\|\bar{s}_k\|,
	\end{equation}
	$$(1-\bar{c}\varepsilon_k)^2\|\bar{s}_k\|^2 -2\bar{y}_k^{\top}\bar{s}_k +\|\bar{s}_k\|^2 \leqslant  \|\bar{y}_k\|^2 -2\bar{y}_k^{\top}\bar{s}_k +\|\bar{s}_k\|^2 \leqslant  \bar{c}^2 \|\bar{s}_k\|^2 \varepsilon_k^2.$$
	It follows that
	\begin{equation}\label{bar{y}_k^{top} bar{s}_k}
		\frac{\bar{y}_k^{\top}\bar{s}_k}{\|\bar{s}_k\|^2} \geqslant  1-\bar{c}\varepsilon_k.
	\end{equation}
	Using the Assumption \ref{assum3} \textit{(ii)}, and \eqref{bar{y}_k^{top} bar{s}_k} we obtain
	\begin{align}
		\bar{a}_k = \frac{\bar{\gamma}_k^{\top}\bar{s}_k}{\|\bar{s}_k\|^2} =  \frac{(\bar{y}_k + m_k\Hf_{\lambda}^{k}(x^*)^{-1}\bar{s}_k )^{\top}\bar{s}_k}{\|\bar{s}_k\|^2} 
		& = \frac{\bar{y}_k^{\top}\bar{s}_k}{\|\bar{s}_k\|^2} + m_k\frac{\bar{s}_k^{\top} \Hf_{\lambda}^{k}(x^*)^{-1}\bar{s}_k}{\|\bar{s}_k\|^2} \nonumber \\
		& \geqslant  1-\bar{c}\varepsilon_k +\frac{m_k}{L} \nonumber \\
		& \geqslant 1-\bar{c}\varepsilon_k. \label{bar{a}_k}
	\end{align}
	By the L-Lipschitz continuity of $f_i$, we have 
	\begin{equation}\label{U}
		\frac{\|y_k\|}{\|s_k\|} \leqslant L.
	\end{equation}
	From \eqref{bizhi_ 1}, \eqref{U} and the right side of  \eqref{bar{y}_k}, 
	\begin{align}
		\bar{b}_k 
		& =  \frac{\|\bar{\gamma}_k\|^2}{\bar{\gamma}_k^{\top}\bar{s}_k} \nonumber \\
		& = \frac{\|\bar{y}_k\|^2}{\bar{\gamma}_k^{\top}\bar{s}_k} + 2m_k\frac{\bar{y}_k^{\top} \Hf_{\lambda}^{k}(x^*)^{-1}\bar{s}_k}{\bar{\gamma}_k^{\top}\bar{s}_k} + (m_k)^2\frac{\bar{s}_k^{\top} \Hf_{\lambda}^{k}(x^*)^{-2}\bar{s}_k}{\bar{\gamma}_k^{\top}\bar{s}_k} \nonumber \\
		& \leqslant \frac{(1+\bar{c}\varepsilon_k)^2}{1-\bar{c}\varepsilon_k} 
		+ 2m_k\frac{\|\Hf_{\lambda}^{k}(x^*)^{-\frac{1}{2}}\| \|y_k\| \|\Hf_{\lambda}^{k}(x^*)^{-1}\| \|\Hf_{\lambda}^{k}(x^*)^{\frac{1}{2}}\| \|s_k\|}{U\|s_k\|^2}  + (m_k)^2\frac{\|\Hf_{\lambda}^{k}(x^*)^{-2}\| \|\bar{s}_k\|^2}{U\|s_k\|^2} \nonumber \\
		& \leqslant 1+\frac{3\bar{c}+ \bar{c}^2\varepsilon_k}{1 - \bar{c}\varepsilon_k}\varepsilon_k + \eta_1 m_k +\eta_2 m_k^2 ,
	\end{align}
	where $\eta_1 = 2\frac{\|\ \Hf_{\lambda}^{k}(x^*)^{-\frac{1}{2}}\| \|\Hf_{\lambda}^{k}(x^*)^{-1}\| \|\Hf_{\lambda}(x^*)^{\frac{1}{2}}\| L}{U}$, $\eta_2 = \frac{\|\Hf_{\lambda}^{k}(x^*)^{-2}\| \|\Hf_{\lambda}^{k}(x^*)^{\frac{1}{2}}\|^2}{U}.$
	Using \eqref{bar{B}_{k+1}} and \eqref{bar{a}_k},
	\begin{align*}
		\psi(\bar{B}_{k+1}) 
		\leqslant \psi(\bar{B}_{k}) 
		& +\left(\frac{3\bar{c}+ \bar{c}^2\varepsilon_k}{1 - \bar{c}\varepsilon_k}\varepsilon_k + \eta_1 m_k +\eta_2 m_k^2 -\ln(1-\bar{c}\varepsilon_k)\right)\\
		& + \ln(\cos^2\bar{\beta}_k) +\left[1 -\frac{\bar{q}_k}{\cos^2\bar{\beta_k}} + \ln\left(\frac{\bar{q}_k}{\cos^2\bar{\beta_k}}\right)\right],
	\end{align*}
	By adding the above inequality from $1$ to $k$, then we obtain
	
	\begin{align}
		0< \psi(\bar{B}_{k+1}) 
		\leqslant \psi(\bar{B}_{1}) 
		& +\sum_{j=1}^k\left(\frac{3\bar{c}+ \bar{c}^2\varepsilon_j}{1 - \bar{c}\varepsilon_j}\varepsilon_j + \eta_1 m_j +\eta_2 m_j^2 -\ln(1-\bar{c}\varepsilon_j)\right) \nonumber\\
		& + \left[1 -\frac{\bar{q}_j}{\cos^2\bar{\beta_j}} + \ln\left(\frac{\bar{q}_j}{\cos^2\bar{\beta_j}}\right)\right] +\ln \left(\cos^2\bar{\beta}_j\right),\label{psi(bar{B}{k+1})}
	\end{align}
	From Theorem \ref{them:R-line convergence}, there exists $k_0 \in \mathbb{N}$ such that
	$$\bar{c}\varepsilon_k < \frac{1}{2}, \ \forall k > k_0,$$
	then, 
	\begin{equation}\label{varepsilon_k}
		\ln(1-\bar{c}\varepsilon_k) \geqslant -2\bar{c}\varepsilon_k.
	\end{equation}
	We can get
	\begin{equation}\label{m_k}
		\sum\limits_{k \geqslant 1}m_k < \infty,
	\end{equation} 
	from \eqref{DOC} and Theorem \ref{them:R-line convergence}.
	It follows by \eqref{varepsilon_k} and \eqref{m_k} that
	\begin{align}
		\sum\limits_{j>k_0}\left(\frac{3\bar{c}+ \bar{c}^2\varepsilon_j}{1 - \bar{c}\varepsilon_j}\varepsilon_j + \eta_1 m_j +\eta_2 m_j^2 - \ln(1 - \bar{c}\varepsilon_j)\right) 
		& < \sum\limits_{j>k_0} 9\bar{c}\varepsilon_j + \sum\limits_{j>k_0} (\eta_1 m_j +\eta_2 m_j^2) \nonumber\\
		& < \sum_{j=1}^{\infty}9\bar{c}\varepsilon_j + \sum_{j=1}^{\infty}(\eta_1 m_j +\eta_2 m_j^2)\nonumber\\
		& < \infty. \label{Series convergence}
	\end{align}
	By \eqref{Series convergence} and \eqref{psi(bar{B}{k+1})}, we get
	$$\sum_{j=1}^{\infty}\left(-\left[1 -\frac{\bar{q}_j}{\cos^2\bar{\beta_j}} + \ln\left(\frac{\bar{q}_j}{\cos^2\bar{\beta_j}}\right)\right] -\ln \left(\cos^2\bar{\beta}_j\right) \right) < \infty.$$
	
	\noindent Since $-\ln(\cos^2\bar{\beta}_j)>0$ ,for all $j>k_0$, and $h(t) = 1-t+lnt \leqslant 0$ when $t>0$, hence, we get
	$$ -\left[1 -\frac{\bar{q}_j}{\cos^2\bar{\beta_j}} + \ln\left(\frac{\bar{q}_j}{\cos^2\bar{\beta_j}}\right)\right] \geqslant 0,$$
	$$\lim\limits_{k\to \infty}\ln \left(\frac{1}{\cos^2\bar{\beta}_k}\right)  = 0,\quad \lim\limits_{k\to \infty}\left[1 -\frac{\bar{q}_k}{\cos^2\bar{\beta_k}} + \ln\left(\frac{\bar{q}_k}{\cos^2\bar{\beta_k}}\right)\right] = 0.$$
	The above relations imply that
	$$\lim\limits_{k\to \infty}\ln \left( \cos^2\bar{\beta}_j \right)  = 1,\quad \lim\limits_{k\to \infty}\bar{q}_k = 1.$$
	Now,
	\begin{align*}
		\lim\limits_{k\to \infty}\frac{\|(B_k - \Hf_{\lambda}^{k}(x^*))s_k\|^2}{\|s_k\|^2}
		& = \lim\limits_{k\to \infty}\frac{\|\Hf_{\lambda}^{k}(x^*)^{-\frac{1}{2}}(B_k - \Hf_{\lambda}^{k}(x^*))\Hf_{\lambda}^{k}(x^*)^{-\frac{1}{2}}\bar{s}_k\|^2}{\|\Hf_{\lambda}^{k}(x^*)^{-\frac{1}{2}}s_k\|^2} \\
		& =  \lim\limits_{k\to \infty}\frac{\|(\bar{B}_k - I_n)\bar{s}_k\|^2}{\|\bar{s}_k\|^2} \\
		& = \lim\limits_{k\to \infty}\frac{\|\bar{B}_k\bar{s}_k\|^2  - 2\bar{s}_k^{\top}\bar{B}_k\bar{s}_k + \|\bar{s}_k\|^2} {\|\bar{s}_k\|^2} \\
		& = \lim\limits_{k\to \infty}\left[\frac{\bar{q}_k^{2}}{\cos^2\bar{\beta_k}} - 2\bar{q}_k +1\right] \\
		& = 0
	\end{align*}
	This completes the proof.
\end{proof}

\begin{theorem}
	Suppose that Assumption \ref{assum1} and Assumption \ref{assum3} hold. Let $\{x_k\}$ be the sequence generated by Algorithm \ref{algo1}. Then, \\
	\indent (i)\ \eqref{wolf1} and \eqref{wolf2} hold with $\alpha_k=1$ for sufficiently large $k$.\\
	\indent (ii)\ $\{x_k\}$ converges to $x^*$ at a superlinear rate. 
\end{theorem}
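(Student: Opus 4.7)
The plan is to combine three ingredients already established in the paper: the Dennis-Mor\'e condition $\|(B_k-\Hf_\lambda(x^*))s_k\|/\|s_k\|\to 0$ proved immediately above, the R-linear convergence of $\{x_k\}$ to $x^*$ from Theorem \ref{them:R-line convergence}, and the Hessian bounds in Assumption \ref{assum3}(iii)--(iv). With these in hand, both claims reduce to the classical BFGS superlinear argument, re-weighted through the function $f_\lambda = \sum_{i\in[m]}\lambda_i^k f_i$.

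For part (i), I would start from a second-order Taylor expansion of each $f_i$ at $x_k$ along $d_k$ and use Lipschitz continuity of $\Hf_i$ together with R-linear convergence (which forces $\|d_k\|+\|x_k-x^*\|\to 0$) to replace $\Hf_i(\xi)$ by $\Hf_i(x^*)$ up to an $o(1)$ error. Averaging across $i$ with the weights $\lambda_i^k$ and applying the Dennis-Mor\'e condition, the leading quadratic term becomes $\tfrac12 d_k^\top \Hf_\lambda(x^*) d_k = \tfrac12 d_k^\top B_k d_k + o(\|d_k\|^2) = -\tfrac12\mathcal{D}(x_k,d_k)+o(\|d_k\|^2)$. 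Under the standard assumption that the Armijo parameter in \eqref{wolf1} lies below $1/2$, this yields the Armijo condition with $\alpha_k=1$ for all large $k$. A parallel expansion of $\nabla f_i(x_k+d_k)^\top d_k$ shows that this quantity is $o(1)$ times $\mathcal{D}(x_k,d_k)$, so the curvature condition \eqref{wolf2} follows for any $\sigma_2\in(0,1)$.

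For part (ii), once $\alpha_k=1$ has been accepted we have $s_k=d_k$ and $x_{k+1}-x^* = x_k-x^* - B_k^{-1}\sum_{i\in[m]}\lambda_i^k \nabla f_i(x_k)$. I would pass to a subsequence along which $\lambda_k\to\lambda^*$, which is possible because the dual optimizer of \eqref{MDP} is continuous near $x^*$ under strict positive definiteness of $B$, and then Taylor-expand $\nabla f_i$ at $x^*$. Pareto criticality at $x^*$ forces $\sum_i \lambda_i^*\nabla f_i(x^*)=0$ via the KKT conditions of \eqref{MDP}, so one obtains $\sum_i \lambda_i^k \nabla f_i(x_k) = \Hf_\lambda(x^*)(x_k-x^*) + o(\|x_k-x^*\|)$. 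Substituting into the recursion and invoking the Dennis-Mor\'e condition together with $\|s_k\|=\Theta(\|x_k-x^*\|)$ (itself a bootstrap consequence of superlinearity) delivers $\|x_{k+1}-x^*\| = o(\|x_k-x^*\|)$, i.e., Q-superlinear convergence.

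The main obstacle I anticipate is the $k$-dependence of the weights $\lambda_i^k$: the Dennis-Mor\'e statement features a Hessian $\Hf_\lambda(x^*)$ whose precise identity shifts with $k$, so one must pass to a convergent subsequence of $\{\lambda_k\}$ and verify compatibility with the KKT conditions of Pareto criticality at $x^*$. Uniqueness of $x^*$ in the strongly convex setting should promote the subsequential argument to a full-sequence statement. A secondary subtlety is that the paper's algorithm does not explicitly restrict the Armijo parameter to $(0,1/2)$, so this hypothesis may have to be stated or verified separately before invoking it in part (i).
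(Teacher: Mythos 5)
The paper does not actually supply a proof of this theorem: it is omitted with a pointer to \cite[Theorem 4.3]{Global quasi-Newton}. Your sketch follows exactly the route that reference takes --- Taylor expansion plus the Dennis--Mor\'e condition to get unit steps accepted, then the quasi-Newton recursion $x_{k+1}-x^* = x_k - x^* - B_k^{-1}\sum_i\lambda_i^k\nabla f_i(x_k)$ combined with $\sum_i\lambda_i^*\nabla f_i(x^*)=0$ to get the superlinear rate --- so there is no methodological divergence to report. Your two flagged subtleties are both genuine and worth recording: the algorithm as stated only requires $\sigma_1\in(0,1)$, whereas acceptance of $\alpha_k=1$ in part (i) needs the Armijo parameter below $1/2$ (the experiments use $\sigma_1=10^{-4}$, but the hypothesis is never stated); and the weights $\lambda_i^k$ vary with $k$, so the matrix $\Hf_{\lambda}(x^*)$ appearing in the paper's Dennis--Mor\'e theorem is itself $k$-dependent, which means the congruence transformation $\bar{B}_k=\Hf_{\lambda}(x^*)^{-1/2}B_k\Hf_{\lambda}(x^*)^{-1/2}$ is not fixed and a convergent-subsequence (or full-sequence) argument for $\{\lambda^k\}$ must be supplied before the telescoping of $\psi(\bar{B}_k)$ and the final limit are legitimate. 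Neither issue is addressed in the paper, so your proposal is, if anything, more careful than the source it would be compared against.
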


\begin{proof}
	The proof is similar to that in \cite[Theorem 4.3]{Global quasi-Newton}, we omit it here. 
\end{proof}

\section{Numerical experiments} \label{sec5:numerical}

\indent In this section, we present some numerical experiments to verify the efficiency of MFQNMO. We compare MFQNMO with QNMO \cite{quasi-Newton_1} with Wolfe line search and MQNMO \cite{modified} with Wolfe line search. The selected test problems include convex and nonconvex problems. Our evaluation criteria include the method's effectiveness in finding optimal solutions and its ability to approximate real Pareto surfaces accurately. These experiments were implemented using Python 3.9.\\
\indent For MQNMO and MFQNMO methods, we used the Frank-Wolf gradient method to solve the dual problem \eqref{MDP}.  For QNMO, \eqref{quasi-newton} is solved using scipy.optimize, which is an optimization Solver. All test problems and main characteristics are listed in Table \ref{tab1}. Columns `n' and `m' indicate the number of variables and objectives, respectively. The `Convex' column indicates whether the problem is convex or not. `Y' stands for convex MOP. `N' stands for nonconvex MOP.  Problems are solved under box constraints ${{x}_{L}}\leqslant x\leqslant {{x}_{U}}.$ The initial points are randomly selected within these box constraints ${{x}_{L}}\leqslant x\leqslant {{x}_{U}}.$ We used the termination criterion  $\left| \theta \left( x \right) \right|\leqslant {{10}^{-8}}$ for all test problems. The maximum number of iterations is set to 500. To obtain the stepsize that satisfies the Wolfe conditions \eqref{wolf1} and \eqref{wolf2}, we used the Wolf line search algorithm proposed in \cite{limited BFGS}. We used $\sigma_1 = {10}^{-4}$, $\sigma_2 = 0.1$, and set $B_0 = I_n.$ We perform 200 computations using the same initial points for all methods. 
\begin{table}
	\caption{The description of all problems used in numerical experiments} \label{tab1}
	\centering
	\begin{tabular}{lllllll}
		\toprule
		problem & $n$ & $m$ & ${{x}_{L}}$ & ${{x}_{U}}$ & convex & Reference \\
		\midrule
		SD & 4 & 2 & {\( (1,-\sqrt{2},-\sqrt{2},1) \)} & {\( (3,3,3,3) \)} & Y & \cite{SD} \\
		PNR & 2 & 2 & {\( (-2,-2) \)} & {\( (2,2) \)} & Y & \cite{PNR} \\
		JOS1a & 50 & 2 & \( (-2,\cdots,-2) \) & \( (2,\cdots,2) \) & Y & \cite{JOS} \\
		JOS1b & 100 & 2 & \( (-2,\cdots,-2) \) & \( (2,\cdots,2) \) & Y & \cite{JOS} \\
		DGO1 & 1 & 2 & \( -10 \) & \( 13 \) & N & \cite{test review} \\
		DGO2 & 1 & 2 & \( -9 \) & \( 9 \) & Y & \cite{test review} \\
		Lov1 & 2 & 2 & \( (-10,-10) \) & \( (10,10)\) & Y &  \cite{LOV} \\
		Lov2 & 2 & 2 & \( (-0.75,-0.75) \) & \( (0.75,0.75) \) & N &  \cite{LOV} \\
		Lov3 & 2 & 2 & \( (-20,-20) \) & \( (20,20) \) & N &  \cite{LOV} \\
		Lov4 & 2 & 2 & \( (-20,-20) \) & \( (20,20) \) & N &  \cite{LOV} \\
		SK1 & 1 & 2 & \( -100 \) & \( 100 \) & N &  \cite{test review} \\
		BK1 & 2 & 2 & \( (-5,-5) \) & \( (10,10) \) & Y &  \cite{test review} \\
		SLCDT1 & 2 & 2 & \( (-1.5,-1.5) \) & \( (1.5,1.5) \) & N &  \cite{SLCDT} \\
		MOP1 & 1 & 2 & \( -{10}^{5} \) & \( {10}^{5} \) & Y & \cite{test review} \\
		MOP2 & 2 & 2 & \( (-4,-4) \) & \( (4,4) \) & N & \cite{test review} \\
		LDTZ & 3 & 3 & \( (0,0,0) \) & \( (1,1,1) \) & N & \cite{LDTZ} \\
		Hil1 & 2 & 2 & \( (0,0) \) & \( (1,1) \) & N & \cite{Hil} \\
		AP2 & 1 & 2 & \( -100 \) & \( 100 \) & Y & \cite{modified} \\
		AP3 & 2 & 2 & \( (-100,-100)) \) & \( (100,100) \) & N & \cite{modified} \\
		FF1 & 2 & 2 & \( (-1,-1) \) & \( (1,1) \) & N & \cite{test review} \\
		KW2 & 2 & 2 & \( (-3,-3) \) & \( (3,3) \) & N & \cite{KW2} \\
		MHHM1 & 1 & 3 & \( 0 \) & \( 1 \) & Y & \cite{test review} \\
		MHHM2 & 2 & 3 & \( (0,0) \) & \( (1,1) \) & Y & \cite{test review} \\
		\bottomrule
	\end{tabular}
\end{table}

\begin{table}
	\caption{Number of average iterations (iter), average CPU time (time(ms)), number of average function evaluations (feval), and number of failure points (NF) of MFQNMO, QNMO, and MQNMO.}\label{tab2}
	\centering
	\resizebox{\textwidth}{!}{
	\begin{tabular}{*{14}{c}}
		\toprule
		\multirow{2}*{Problem} & \multicolumn{4}{c}{QNMO} & \multicolumn{4}{c}{MFQNMO} & \multicolumn{4}{c}{MQNMO} \\
		\cmidrule(lr){2-5} \cmidrule(lr){6-9} \cmidrule(lr){10-13}
		& {iter} & {time} & {feval} & {NF} & {iter} & {time} & {feval} & {NF}& {iter} & {time} & {feval} & {NF} \\
		\midrule
		SD & 13.49 & 486.47 & 15.54 & \textbf{1} &\textbf{7.35} & \textbf{2.55} & \textbf{18.51}& \textbf{1} & 11.17 & 5.53 & 43.54 & \textbf{1} \\
		
		PNR & \textbf{2.56} & 118.66 & \textbf{5.48} & \textbf{0} & 7.87 & \textbf{2.81} & 28.31 & \textbf{0} & 8.12& 3.84 & 54.67 & \textbf{0} \\
		
		JOS1a & \textbf{3.01} & 270.14 & \textbf{7.39}& \textbf{0} & 5.96 & \textbf{3.84}& 32.89 & \textbf{0} & 3.05 & 5.38 & 24.51 & \textbf{0} \\
		
		JOS1b & \textbf{2.89} & 12725.46 & \textbf{7.39} & \textbf{0} & 7.65 &\textbf{19.38} & 10.05 & \textbf{0}& 11.23 & 43.337 & 103.01 & \textbf{0} \\
		
		DGO1 & 175.51 & 1906.25 & 5058.35 & 70 & \textbf{1.39}	&\textbf{0.64} &\textbf{4.14}	&\textbf{0} & 1.44 &0.72 & 4.37 &\textbf{0}	 \\
		
		DGO2 & \textbf{3.59} & 52.34 & \textbf{9.56} & \textbf{0} & 5.40 & \textbf{1.64} & 17.41 & \textbf{0} & 70.35 & 31.75 & 409.96 & \textbf{0 }\\
		
		Lov1 & 268.91 & 3969.94	& 9213.41	& 107	& \textbf{4.27}	& \textbf{1.45}	& \textbf{12.72}	& \textbf{0} &4.30	&1.72 & 15.04 &\textbf{0}\\
		
		Lov2  & 30.43	&448.92	&77.89	&146 &\textbf{12.36}	&\textbf{8.41}	&\textbf{31.49}	& \textbf{124}	&29.70	&19.44	&107.27	&\textbf{124 }\\
		
		Lov3  & 78.77	&1462.87	&3400.81	&30	&\textbf{11.86}	&\textbf{6.27}	&\textbf{29.42}	&\textbf{3}	&24.13	&9.81	&71.69	&\textbf{3} \\
		
		Lov4 & 248.53 &4516.95	&12193.75	&101	&\textbf{1.30}	&\textbf{0.57}	&\textbf{4.04}	&\textbf{0}	&1.52	&0.71	&4.85	&\textbf{0} \\
		
		SK1 & 119.17 & 488.25	&2747.57	&69	&\textbf{2.36}	&\textbf{1.65}	&\textbf{26.54}	&\textbf{0} &2.66	&1.93	&46	&\textbf{0}\\
		
		BK1 & 98.41	&1085.38 &100.41 &39	&\textbf{1}	&\textbf{0.13}	&\textbf{3}	&\textbf{0}	&\textbf{1}	&0.16	&\textbf{3}	&\textbf{0} \\
		
		SLCDT1 & F	&F	&F	&200	&\textbf{3.35}	&\textbf{0.64}	&\textbf{8.51}	&\textbf{0}	&4.18	&1.38	&11.28	&\textbf{0} \\
		
		MOP1 & 4.66	&237.83	&146.13	&47	&1.09	&0.21	&4.89	&18	&\textbf{1}	&\textbf{0.15}	&\textbf{3}	&\textbf{0} \\
		
		MOP2 & 89.24 &1489.39	 &34344.11	 &35	 &\textbf{3.67}	 &\textbf{1.53}	 &\textbf{11.41}	 &\textbf{0}	 &7.18	 &2.28	 &121.99	 &1 \\
		
		LDTZ & F&F	&F	&200	&\textbf{18.02}	&\textbf{87.05}	&\textbf{119.06} &\textbf{18}	&F	&F	&F	&200 \\
		
		Hil1 & 258.88 &7409.41	&6274.12	&110	&\textbf{11.71}	&\textbf{6.41}	&\textbf{62.52}	&\textbf{16}	&98.39	&186.89	&529.98	&53\\
		
		AP2 & 1.04	&35.95	&3.17	&1	&\textbf{0.985}	&\textbf{0.41}	&\textbf{2.95}	&\textbf{0}	&\textbf{0.985}	&0.46	&\textbf{2.95}	&\textbf{0} \\
		
		AP3 & 147.91 &3363.81	&5353.84	&115	&\textbf{22.36}	&\textbf{10.41}	&\textbf{109.45} &\textbf{0}	&248.21	&204.68	&3858.64 &77 \\
		
		FF1 & 245.17	&3394.94	&10089.34	&98	&32.05	&15.97	&190.12	&\textbf{0}	&\textbf{16.12}	&\textbf{8.94}	&\textbf{81.77}	&\textbf{0} \\
		KW2& 92.97	&1697.45	&298.53	&103	&\textbf{13.81}	&\textbf{7.98}	&\textbf{51.01}	&\textbf{24}	&99.81	&69.37	&314.52	&32 \\
		MHHM1 & \textbf{0.88}	& 26.87	& \textbf{2.82}	& \textbf{0}	& \textbf{0.88}	& \textbf{0.40}	& \textbf{2.65}	& \textbf{0}	& \textbf{0.88}	& 0.42	& \textbf{2.65}	& \textbf{0} \\
		MHHM2 & 245.54	& 4752.67	& 9049.92	& 99	& \textbf{6.21}	& \textbf{43.96}	& \textbf{206.47}	& \textbf{31}	& 78.34	& 1356.45	& 3508.36	& 31 \\				
		\bottomrule
	\end{tabular}
		}
\end{table}
In Table \ref{tab2}, the average values obtained from these runs are recorded to determine the number of iterations (iter), function evaluations (feval), and CPU time (time(ms)). Among the 200 initial points, the number of points that did not converge in 500 iterations is recorded in column `NF' for MFQNMO, QNMO, and MQNMO. We denote `Failed' as `F,' which indicates that the corresponding method fails to solve the test problem under the stopping criterion. For large-scale problems, such as JOS, the QNMO method's CPU time consumption is substantial although it requires the fewest iterations. For all nonconvex test problems in Table \ref{tab1},  the QNMO method exhibited poor performance and the MQNMO method surpasses the MFQNMO method only for the FF1 test problem. For certain problems in Table \ref{tab1}, the experimental results of the MFQNMO method and the MQNMO method are equal approximately, such as BK1, MOP1, MHHM1 and AP2. We mentioned the possible reasons for this phenomenon in Remark \ref{rem:MFONMO and MQNMO}. As can be seen from Table \ref{tab2}, the MFQNMO method is more efficient and competitive.

\indent Figure \ref{fig1}-\ref{fig3} shows the Pareto frontier obtained using the MFQNMO method for some convex problems listed in Table \ref{tab1}. Figure \ref{fig4}-\ref{fig10} shows the Pareto frontier obtained using the MFQNMO method for certain nonconvex problems listed in Table \ref{tab1}. For the chosen test problems, it can be observed that the MFQNMO method was able to satisfactorily estimate the Pareto frontiers when considering evenly distributed starting points.

\begin{figure}[H]
	\centering
	\subfigure[QNMO]{
		\includegraphics[scale=0.32]{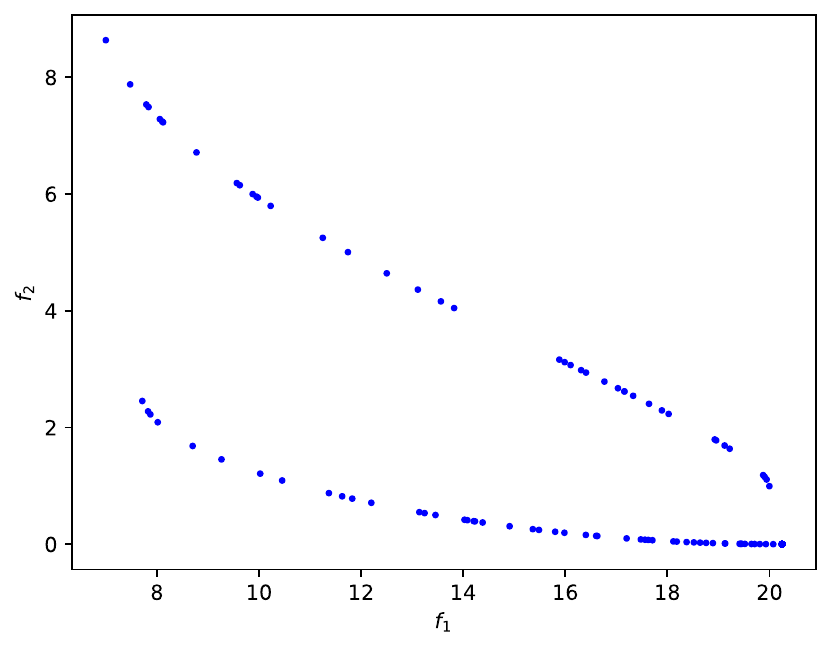} \label{1}
	}
	\quad
	\subfigure[MQNMO]{
		\includegraphics[scale=0.32]{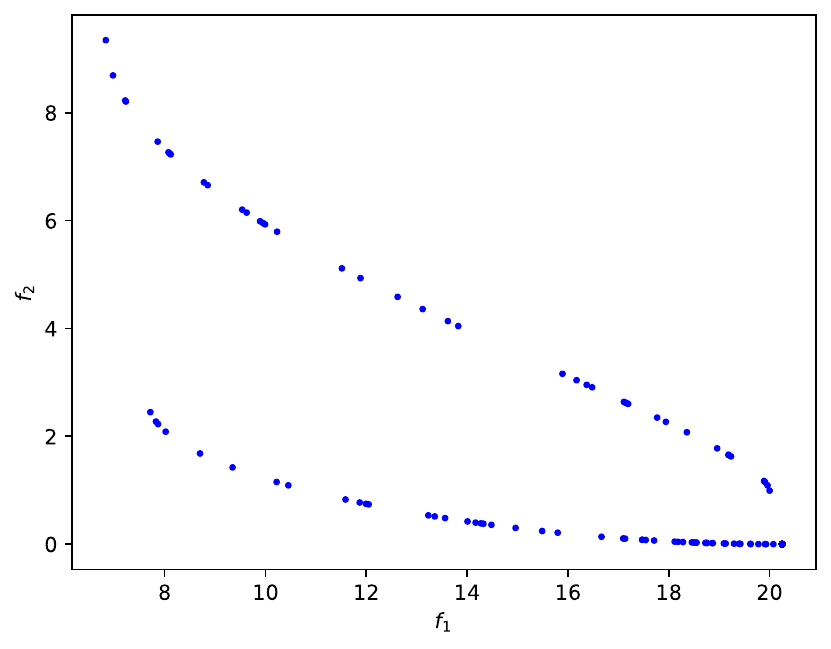}\label{2}
	}
	\quad
	\subfigure[MFQNMO]{
		\includegraphics[scale=0.32]{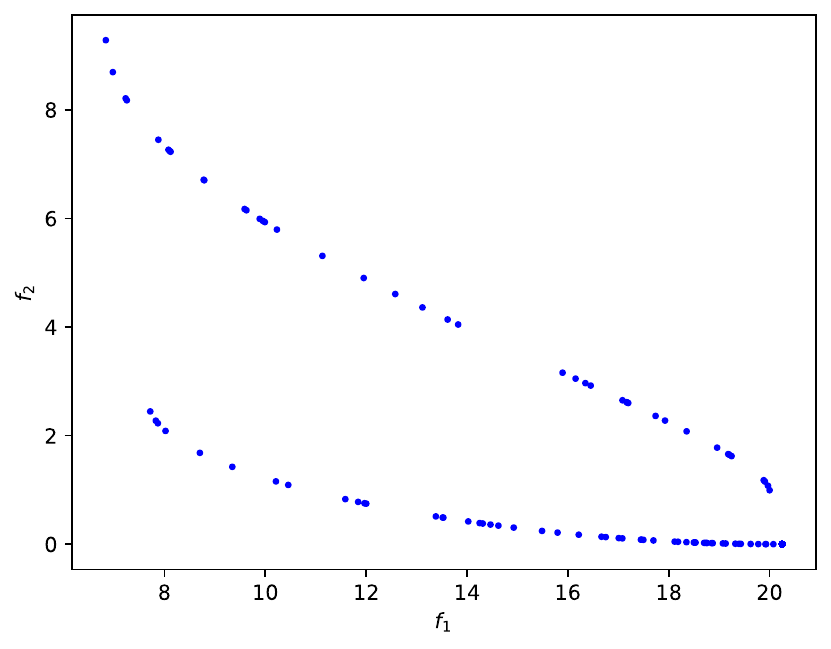}\label{3}
	}
	\quad
	\caption{The approximated nondominated frontiers generated by QNMO, MQNMO, and MFQNMO for the PNR problem.}  \label{fig1}
\end{figure}		

\begin{figure}[H]
	\centering
	\subfigure[QNMO]{
		\includegraphics[scale=0.32]{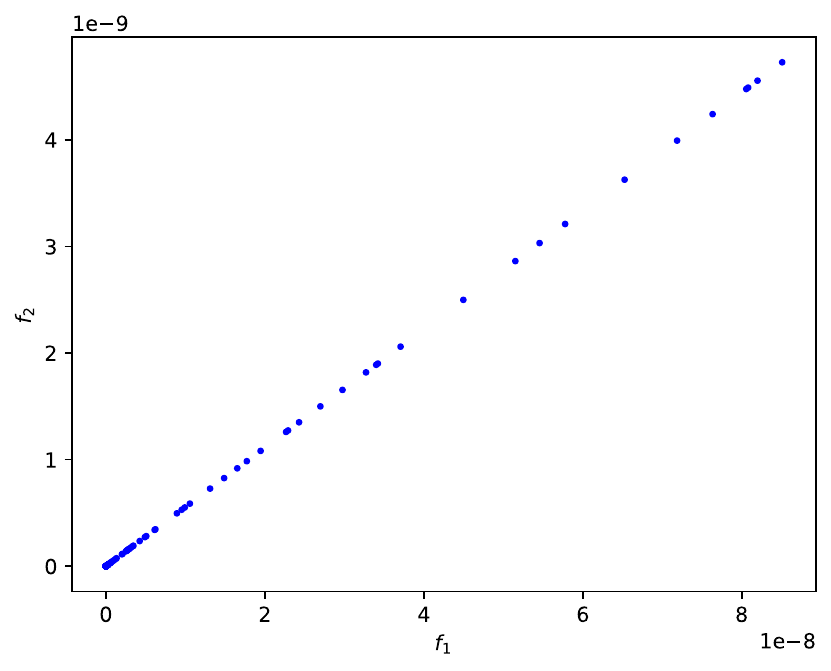} \label{1}
	}
	\quad
	\subfigure[MQNMO]{
		\includegraphics[scale=0.32]{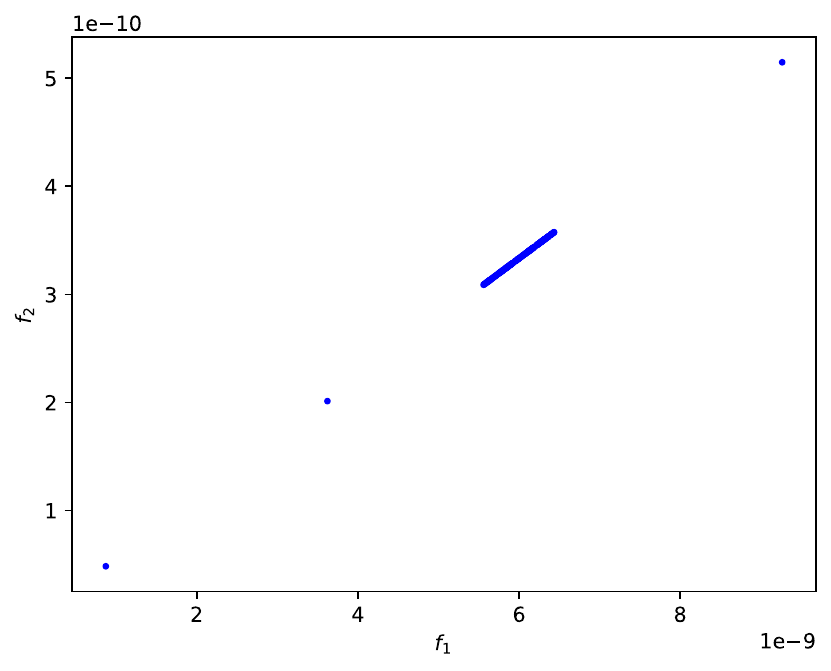}\label{2}
	}
	\quad
	\subfigure[MFQNMO]{
		\includegraphics[scale=0.32]{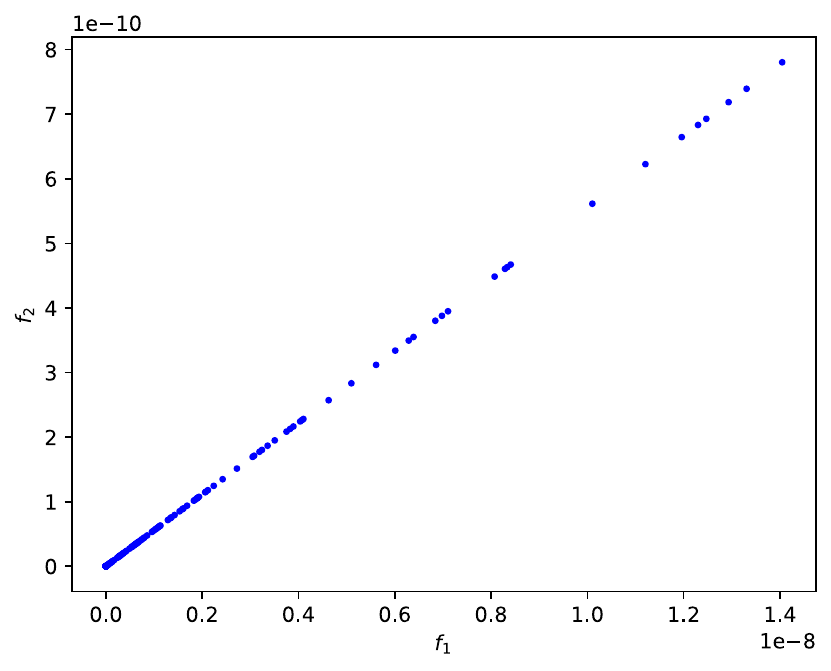}\label{3}
	}
	\quad
	\caption{The approximated nondominated frontiers generated by QNMO, MQNMO, and MFQNMO for the DGO2 problem.}  \label{fig2}
\end{figure}		

\begin{figure}[H]
	\centering
	\subfigure[QNMO]{
		\includegraphics[scale=0.32]{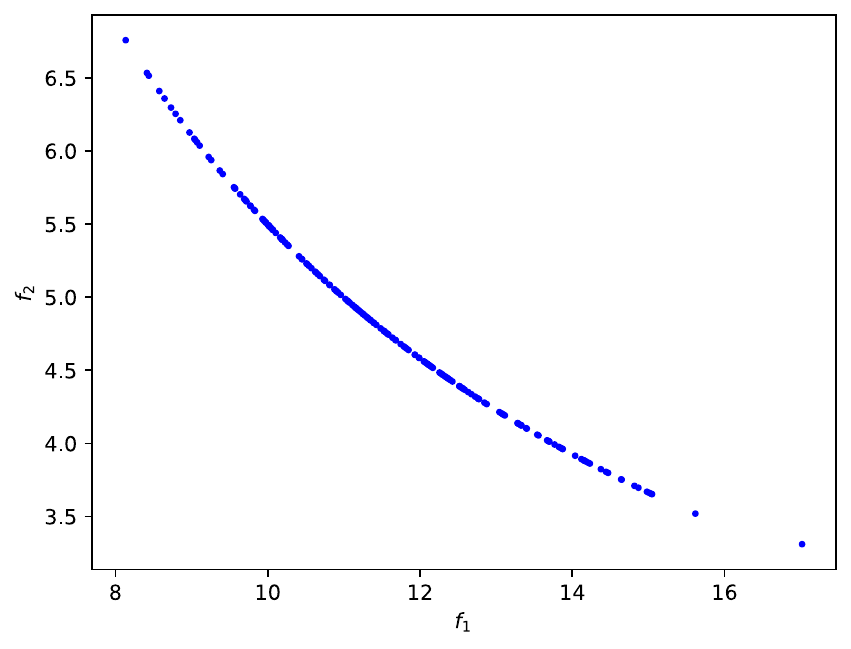} \label{1}
	}
	\quad
	\subfigure[MQNMO]{
		\includegraphics[scale=0.32]{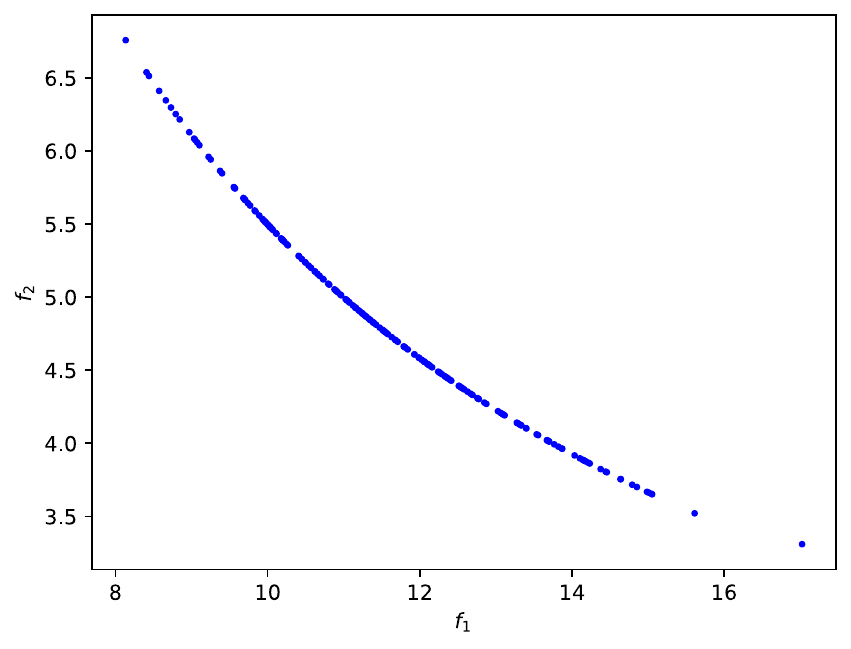}\label{2}
	}
	\quad
	\subfigure[MFQNMO]{
		\includegraphics[scale=0.32]{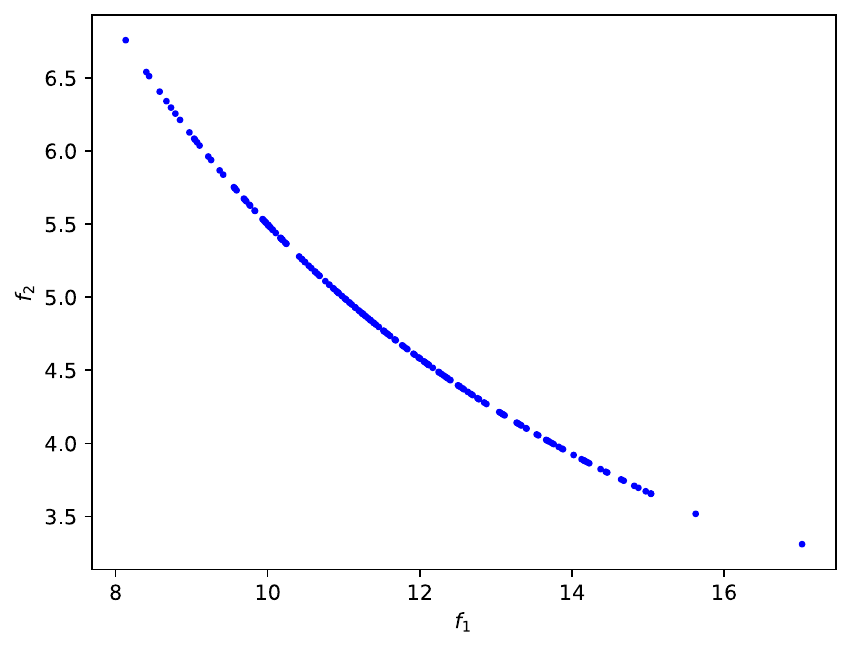}\label{3}
	}
	\quad
	\caption{The approximated nondominated frontiers generated by QNMO, MQNMO, and MFQNMO for the SD problem.}  \label{fig3}
\end{figure}

\begin{figure}[H]
	\centering
	\subfigure[QNMO]{
		\includegraphics[scale=0.32]{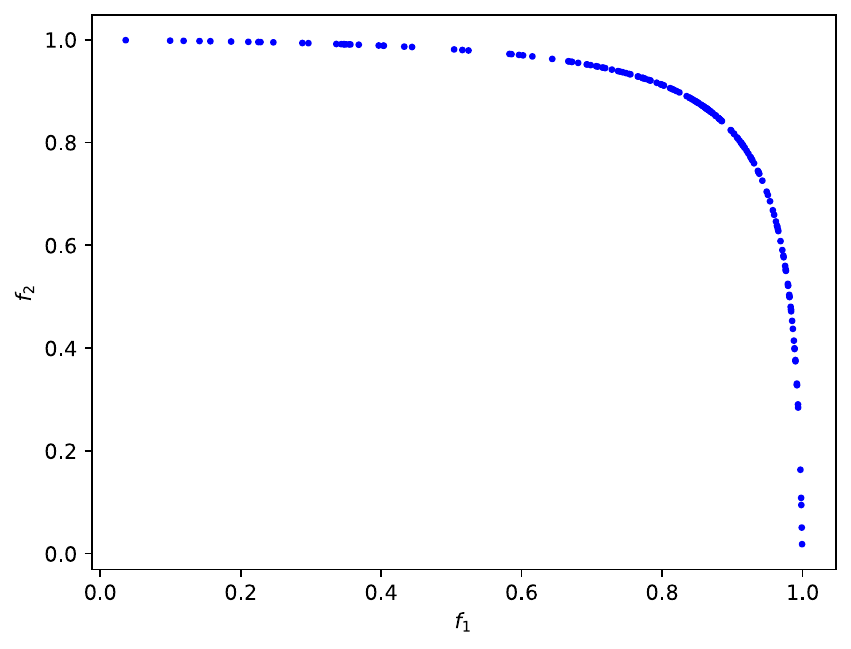} \label{1}
	}
	\quad
	\subfigure[MQNMO]{
		\includegraphics[scale=0.32]{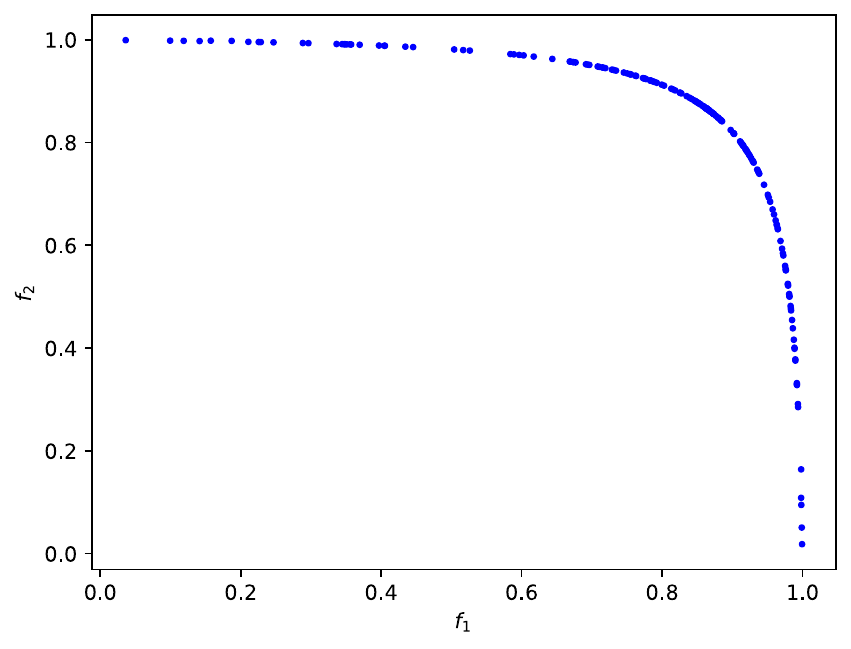}\label{2}
	}
	\quad
	\subfigure[MFQNMO]{
		\includegraphics[scale=0.32]{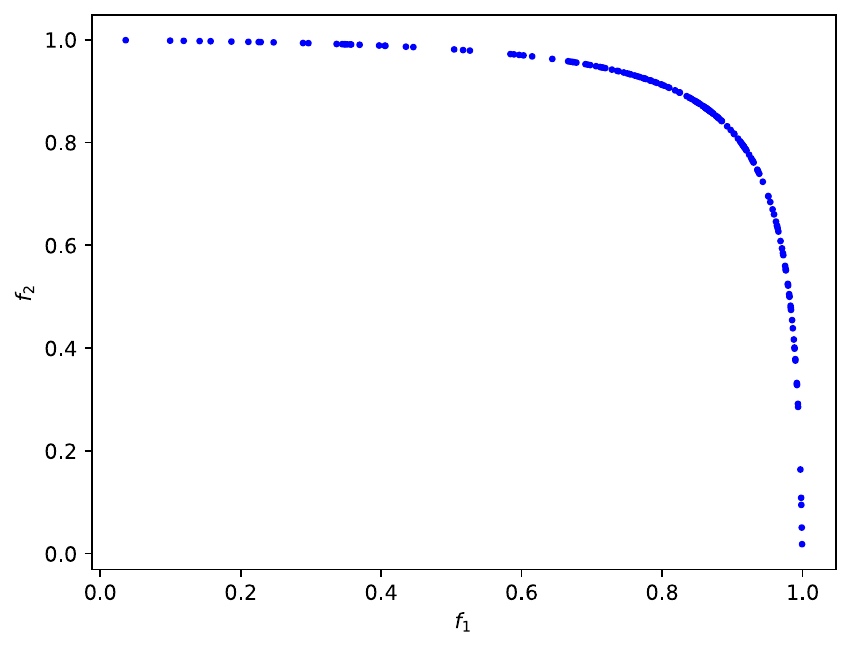}\label{3}
	}
	\quad
	\caption{The approximated nondominated frontiers generated by QNMO, MQNMO, and MFQNMO for the FF1 problem.}  \label{fig4}
\end{figure}		

\begin{figure}[H]
	\centering
	\subfigure[QNMO]{
		\includegraphics[scale=0.32]{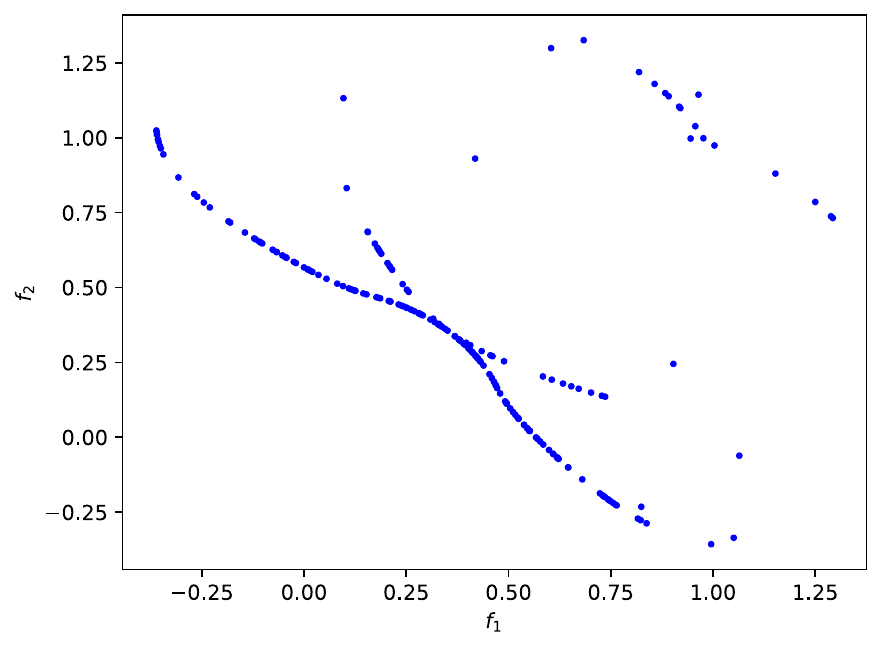} \label{1}
	}
	\quad
	\subfigure[MQNMO]{
		\includegraphics[scale=0.32]{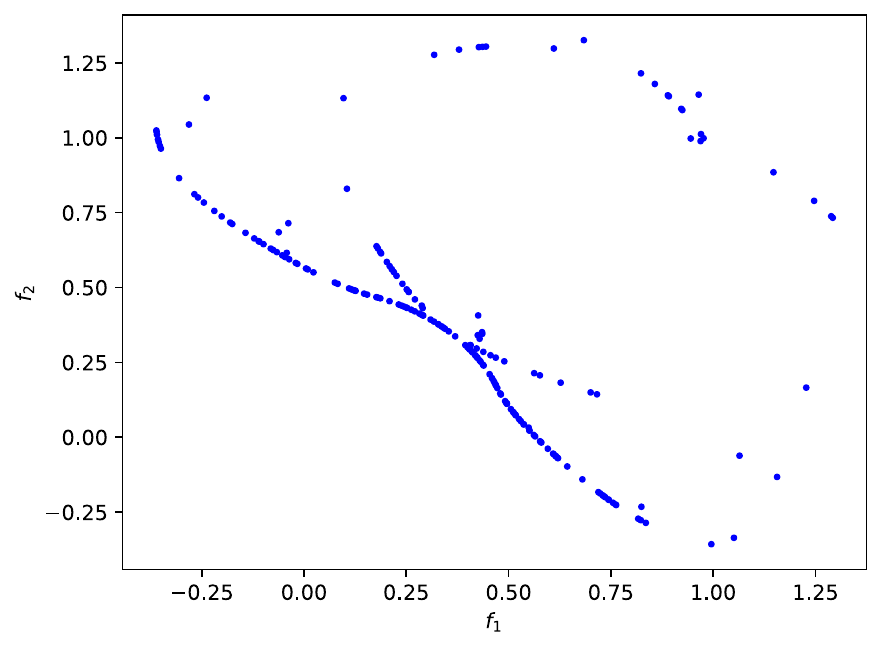}\label{2}
	}
	\quad
	\subfigure[MFQNMO]{
		\includegraphics[scale=0.32]{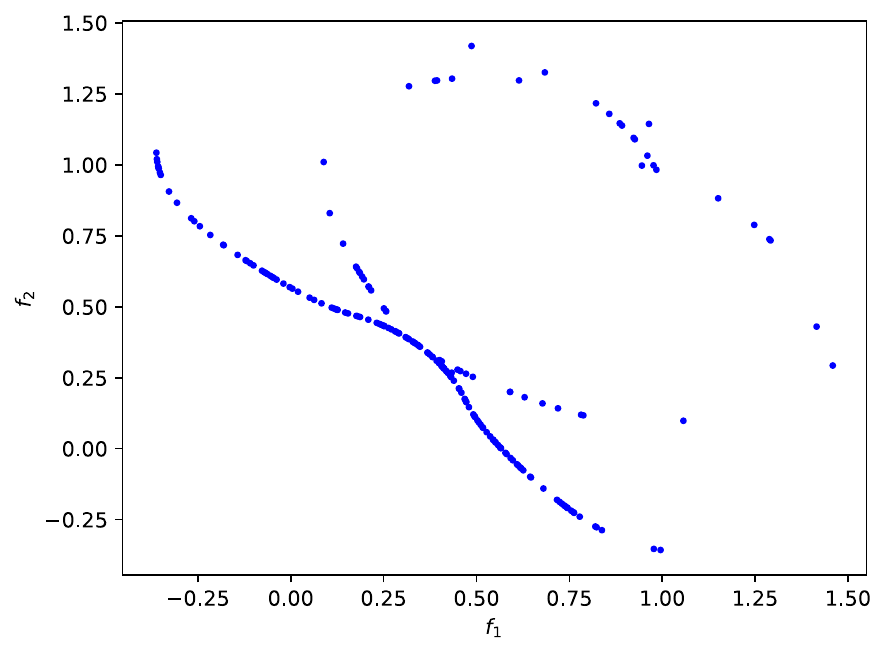}\label{3}
	}
	\quad
	\caption{The approximated nondominated frontiers generated by QNMO, MQNMO, and MFQNMO for the Hil problem.}  \label{fig5}
\end{figure}		

\begin{figure}[H]
	\centering
	\subfigure[QNMO]{
		\includegraphics[scale=0.32]{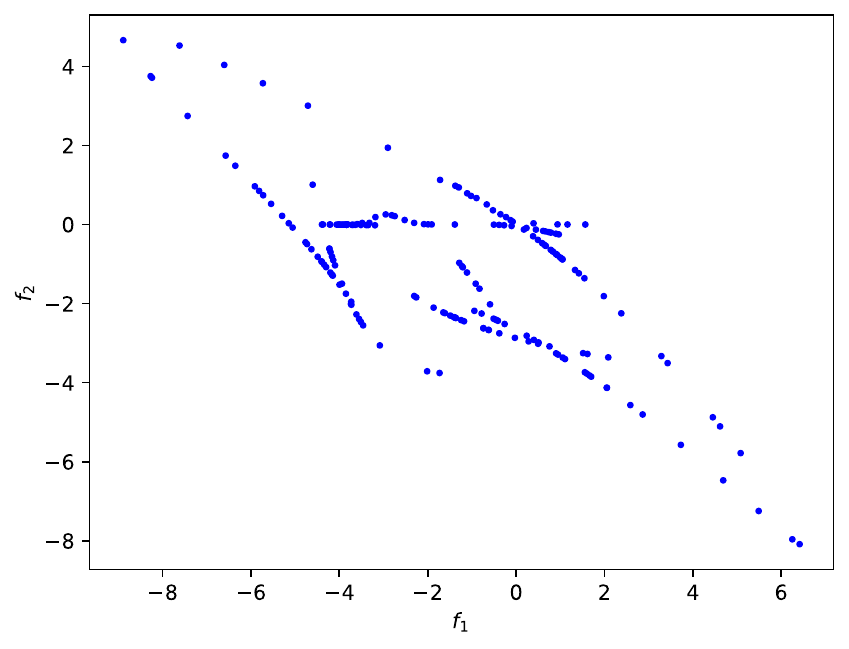} \label{1}
	}
	\quad
	\subfigure[MQNMO]{
		\includegraphics[scale=0.32]{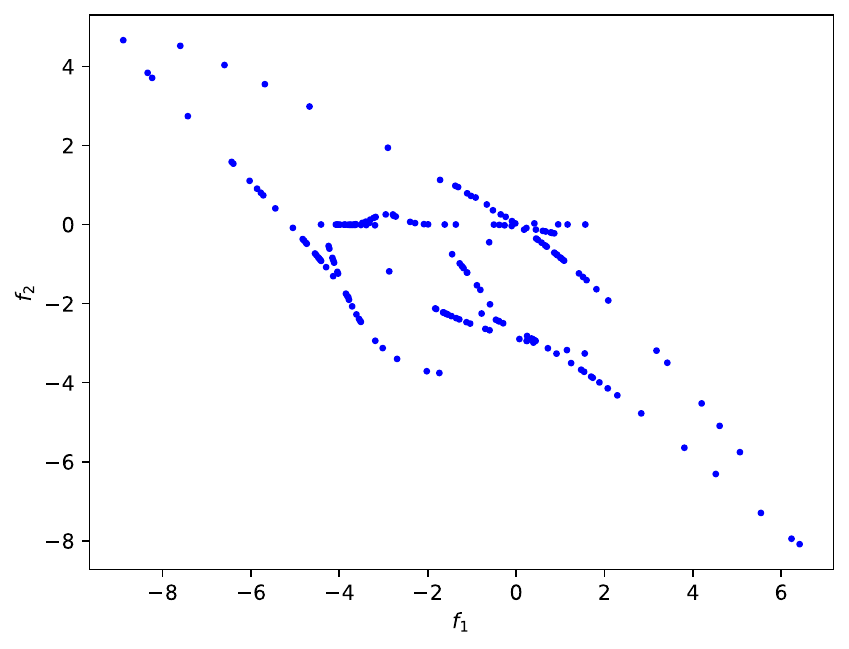}\label{2}
	}
	\quad
	\subfigure[MFQNMO]{
		\includegraphics[scale=0.32]{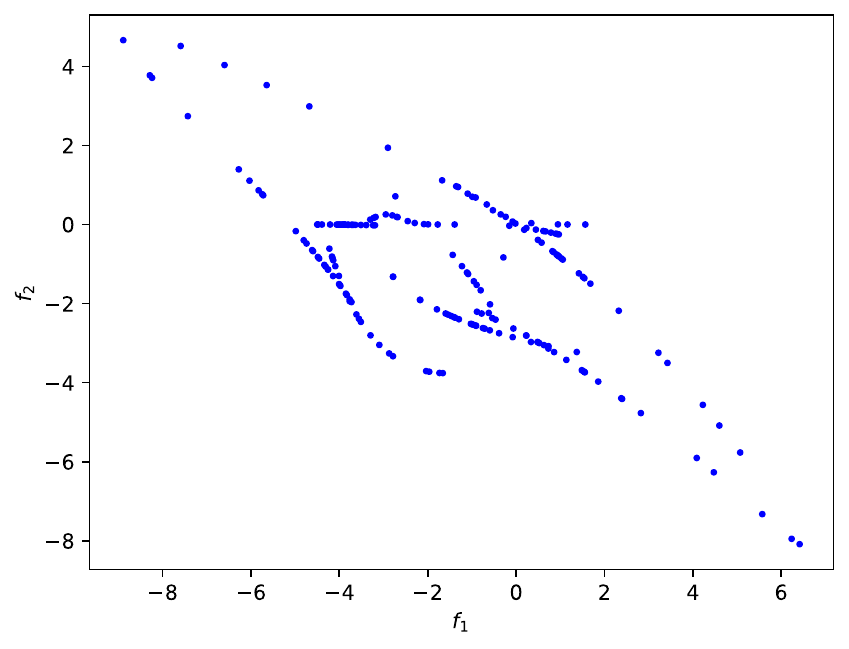}\label{3}
	}
	\quad
	\caption{The approximated nondominated frontiers generated by QNMO, MQNMO, and MFQNMO for the KW2 problem.}  \label{fig6}
\end{figure}

\begin{figure}[H]
	\centering
	\subfigure[QNMO]{
		\includegraphics[scale=0.32]{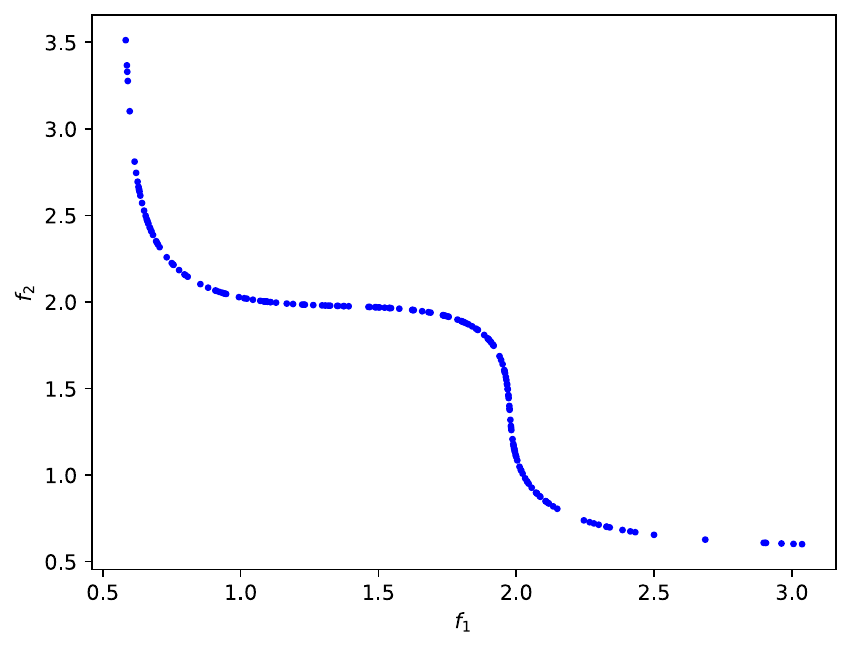} \label{1}
	}
	\quad
	\subfigure[MQNMO]{
		\includegraphics[scale=0.32]{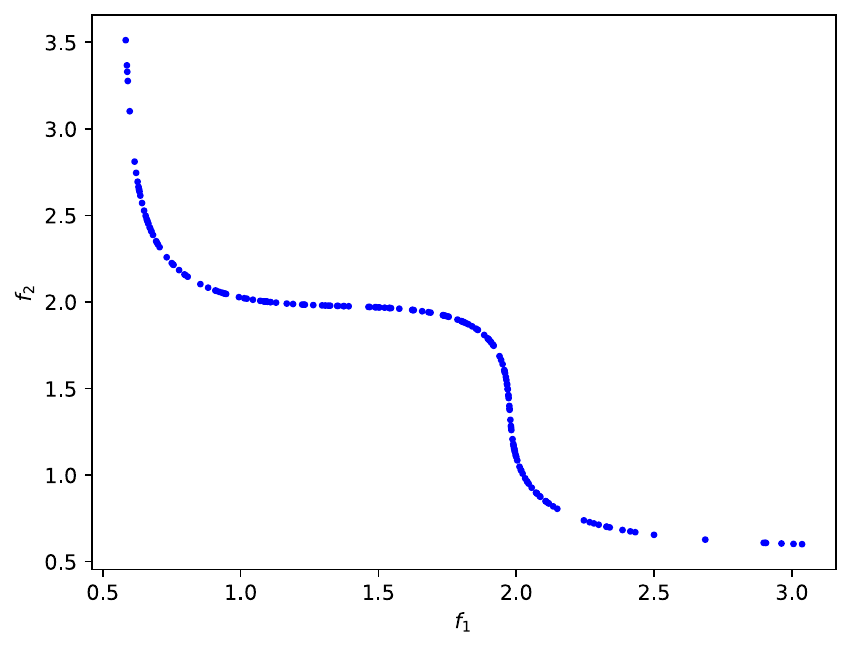}\label{2}
	}
	\quad
	\subfigure[MFQNMO]{
		\includegraphics[scale=0.32]{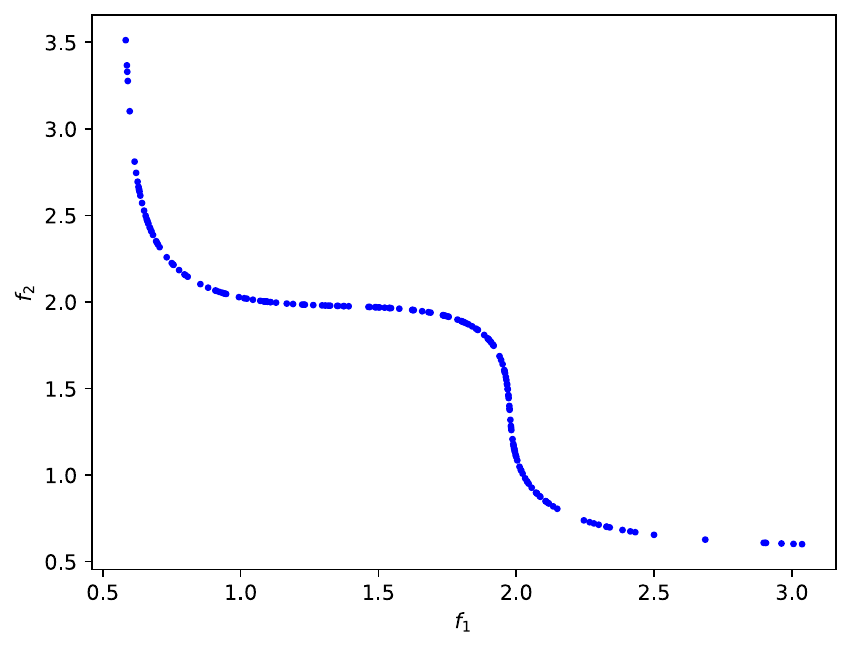}\label{3}
	}
	\quad
	\caption{The approximated nondominated frontiers generated by QNMO, MQNMO, and MFQNMO for the SLCDT1 problem.}  \label{fig7}
\end{figure}	

\begin{figure}[H]
	\centering
	\subfigure[QNMO]{
		\includegraphics[scale=0.32]{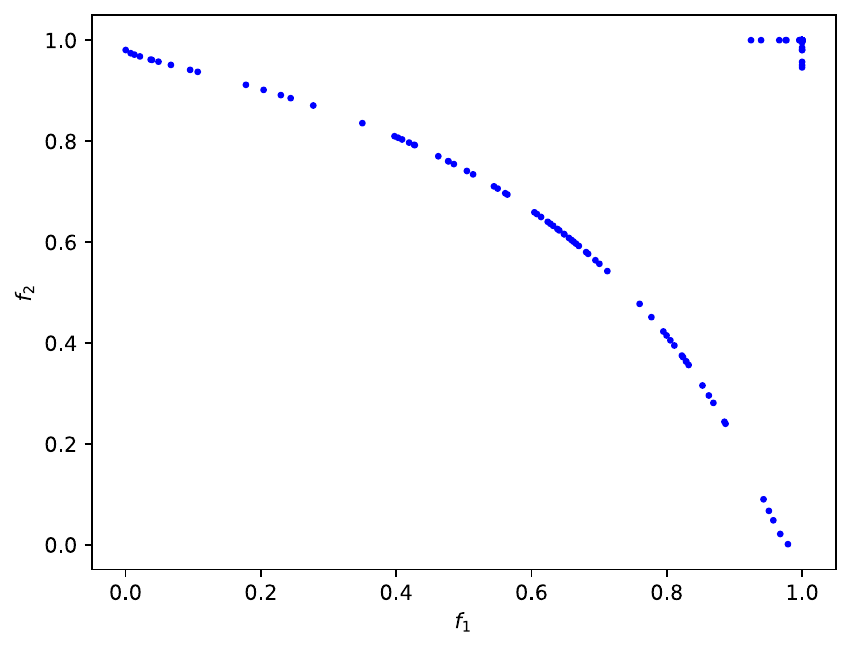} \label{1}
	}
	\quad
	\subfigure[MQNMO]{
		\includegraphics[scale=0.32]{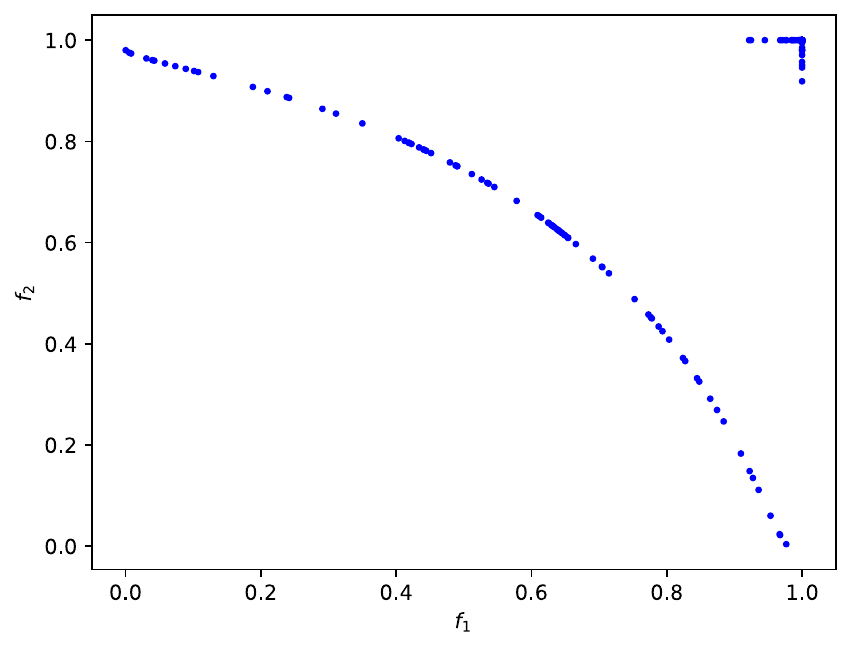}\label{2}
	}
	\quad
	\subfigure[MFQNMO]{
		\includegraphics[scale=0.32]{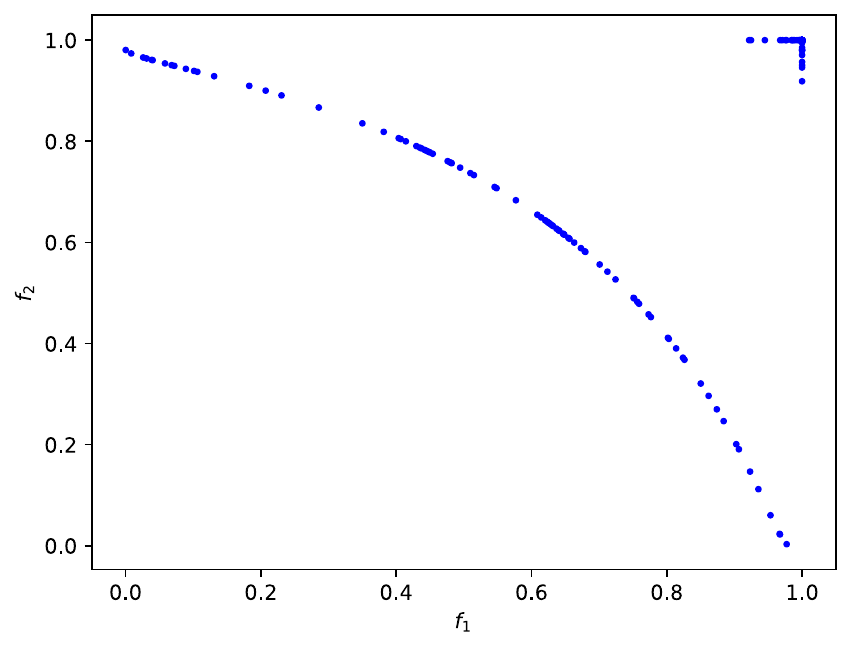}\label{3}
	}
	\quad
	\caption{The approximated nondominated frontiers generated by QNMO, MQNMO, and MFQNMO for the MOP2 problem.}  \label{fig8}
\end{figure}

\begin{figure}[H]
	\centering
	\subfigure[QNMO]{
		\includegraphics[scale=0.32]{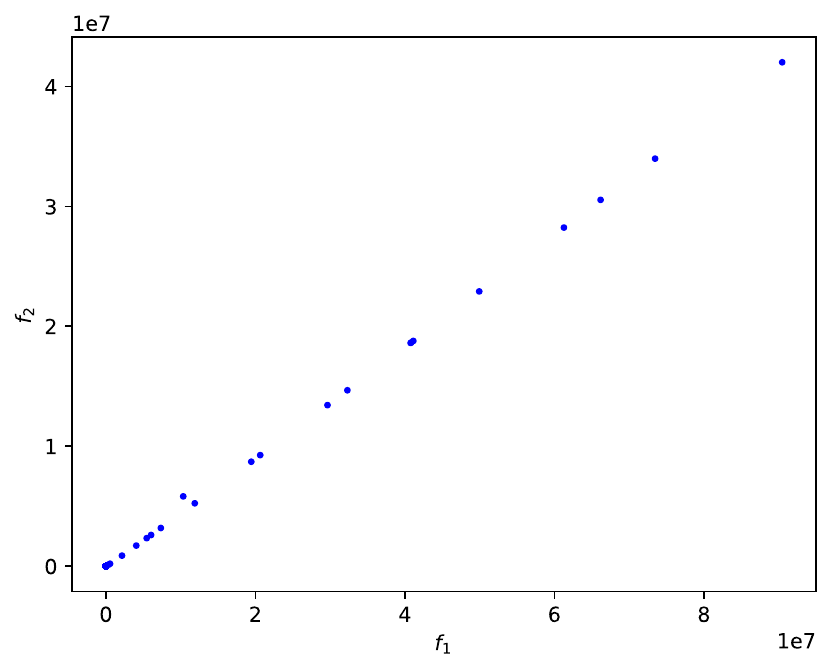} \label{1}
	}
	\quad
	\subfigure[MQNMO]{
		\includegraphics[scale=0.32]{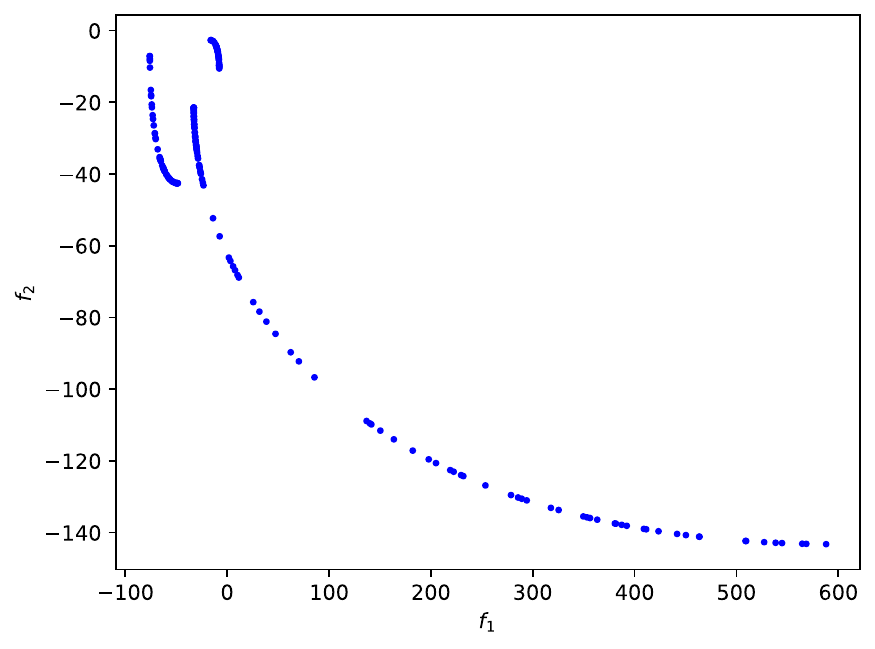}\label{2}
	}
	\quad
	\subfigure[MFQNMO]{
		\includegraphics[scale=0.32]{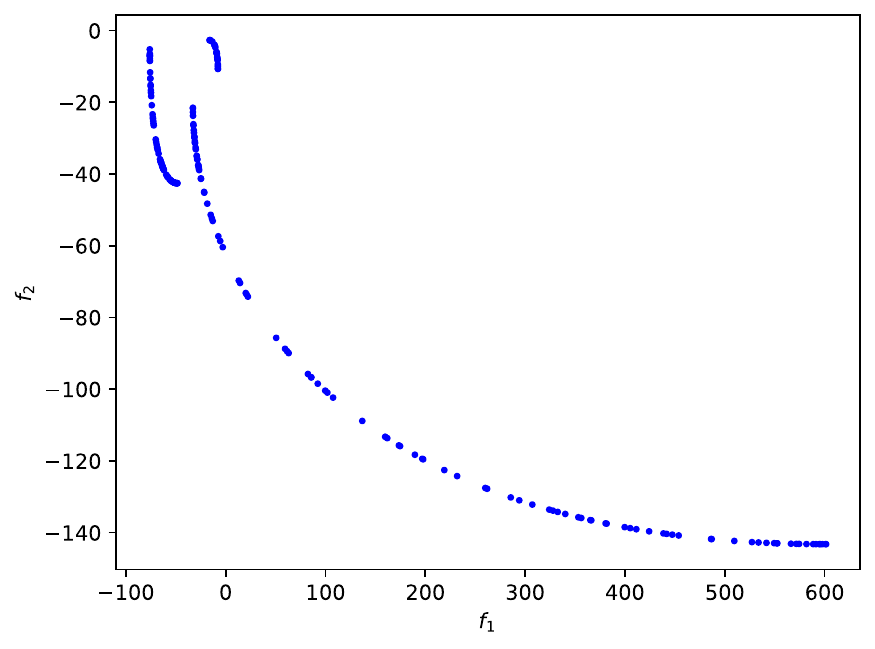}\label{3}
	}
	\quad
	\caption{The approximated nondominated frontiers generated by QNMO, MQNMO, and MFQNMO for the SK1 problem.}  \label{fig9}
\end{figure}	

\begin{figure}[H]
	\centering
	\subfigure[QNMO]{
		\includegraphics[scale=0.4]{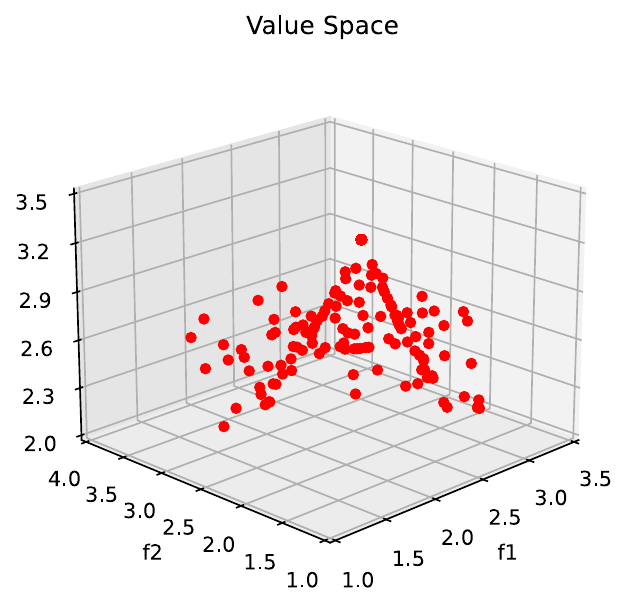} \label{1}
	}
	\quad
	\subfigure[MQNMO]{
		\includegraphics[scale=0.4]{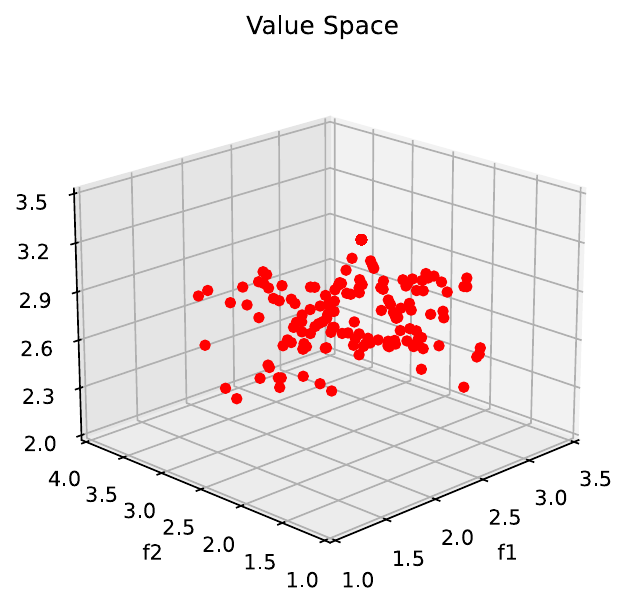}\label{2}
	}
	\quad
	\subfigure[MFQNMO]{
		\includegraphics[scale=0.4]{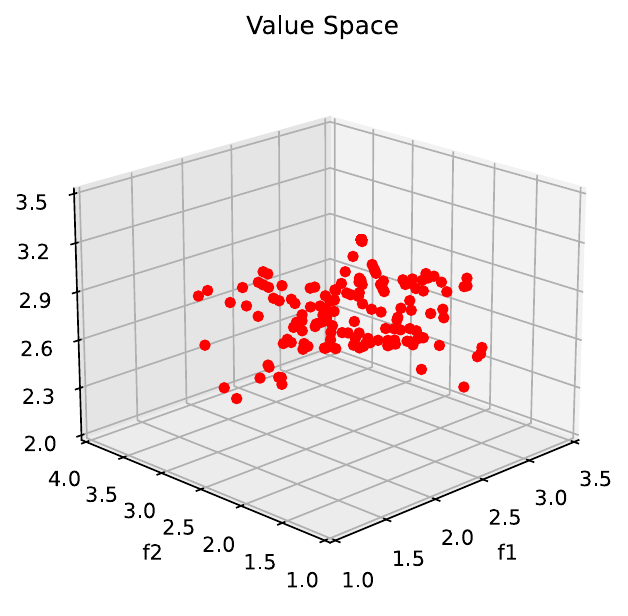}\label{3}
	}
	\quad
	\caption{The approximated nondominated frontiers generated by QNMO, MQNMO, and MFQNMO for the LDTZ problem.}  \label{fig10}
\end{figure}	
\begin{figure}[H]
	\centering
	\subfigure[QNMO]{
		\includegraphics[scale=0.32]{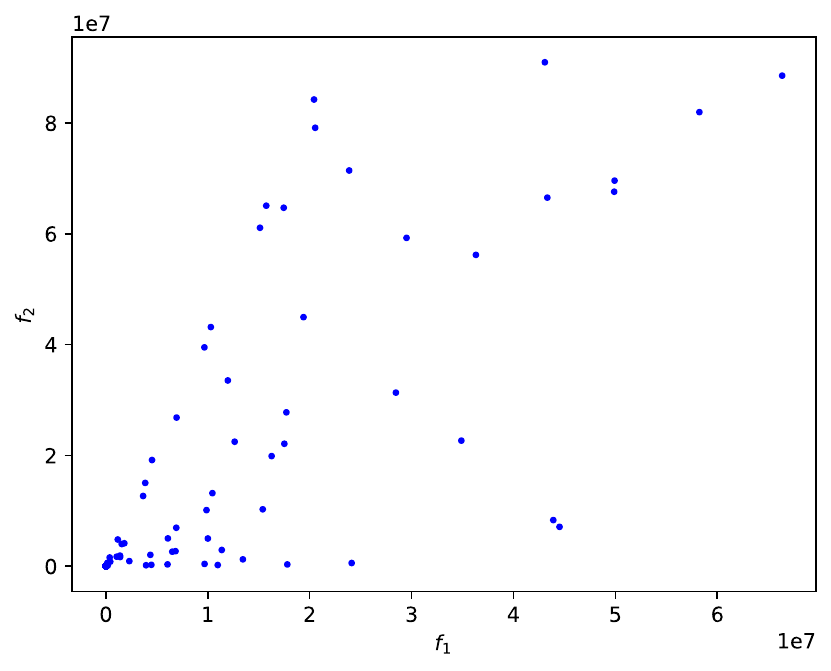} \label{1}
	}
	\quad
	\subfigure[MQNMO]{
		\includegraphics[scale=0.32]{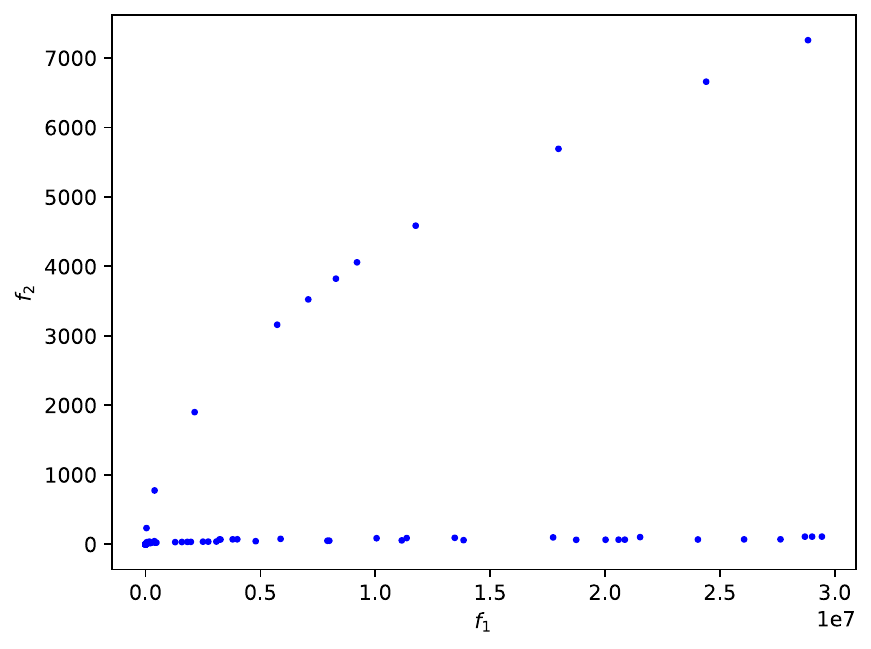}\label{2}
	}
	\quad
	\subfigure[MFQNMO]{
		\includegraphics[scale=0.32]{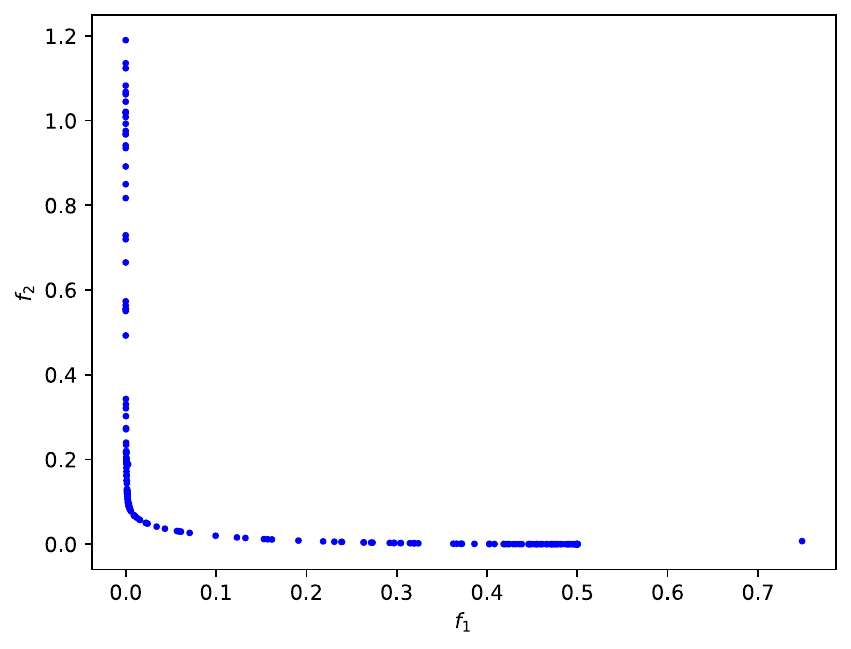}\label{3}
	}
	\quad
	\caption{The approximated nondominated frontiers generated by QNMO, MQNMO, and MFQNMO for the AP3 problem.}  \label{fig11}
\end{figure}

	\section{Conclusion} \label{sec5:Conclusion}
	
	\indent We propose a modified BFGS-type method based on function information for nonconvex multiobjective optimization problems (MFQNMO), in which the BFGS-type formula is updated by using the function information. To obtain the global convergence property, we only need the smoothness of the objective function. Compared with \cite{Global quasi-Newton}, our work has the following advantages.\\
	\indent (1)\ Instead of approximating the Hessian matrix of all objective functions, we use a common matrix to capture the curvature information of the objective function, which balances the effectiveness of the algorithm and the computational cost.\\
	\indent (2)\ For nonconvex functions, most versions of quasi-Newton methods may not converge, so the MFQNMO method proposed in this paper for nonconvex unconstrained problems is meaningful.\\
	\indent (3)\ Compared with \cite{Global quasi-Newton}, the update of the BFGS-type formula does not depend on the selection of parameters, but only needs to use the information of past iterations.\\
	\indent In our future work, we will consider applying the method to nonconvex constrained optimization problems.


\begin{thebibliography}{99}
		
		\bibitem{evironment 2}Asha, L.N., Dey, A., Yodo, N.,  Aragon, L.G.: Optimization approaches for multiple conflicting objectives in sustainable green supply chain management. Sustainability-Base. 14(19), 12790 (2022)

\bibitem{conditional}Assunç{\~{a}}o, P.B., Ferreira, O.P., Prudente, L.F.: Conditional gradient method for multiobjective optimization. Comput. Optim. Appl. 78, 741-768 (2021)


\bibitem{modified}Ansary, M.A., Panda, G.: A modified quasi-Newton method for vector optimization problem. Optimization. 64(11), 2289-2306 (2015)				

\bibitem{tool}Byrd, R.H., Nocedal, J.: A tool for the analysis of quasi-Newton methods with application to unconstrained minimization. SIAM J. Numer. Anal. 26(3), 727–739 (1989)

\bibitem{VBFGS}Chen, J., Li, G.X., Yang, X.M.: Variable Metric Method for Unconstrained Multiobjective Optimization Problems. J. Oper. Res. Soc. China. 11(3), 409–438 (2023)	

\bibitem{PBBMO}Chen, J., Tang L.p., Yang X.M.: Preconditioned Barzilai-Borwein Methods for Multiobjective Optimization Problems.

\bibitem{steepset}Fliege J., Svaiter B.F.: Steepest descent methods for multicriteria optimization.  Math. Meth. Oper. Res. 51, 479-494 (2000)

\bibitem{Newton}Fliege, J., Drummond, L. G.,  Svaiter, B.F.: Newton's method for multiobjective optimization. SIAM J. Optim. 20, 602-626 (2009)

\bibitem{test review} Huband, S., Hingston, P., Barone, L., While, L.: A review of multiobjective test problems and a scalable test problem toolkit. IEEE T. Evolut. Comput. 10(5), 477-506 (2006)

\bibitem{Hil}Hillermeier, C.: Generalized homotopy approach to multiobjective optimization. J. Optimiz. Theroy. App. 110(3), 557-583 (2001)

\bibitem{KW2}I. Kim and O. de Weck. Adaptive weighted-sum method for bi-objective optimization: Pareto front generation. Struct. Multidiscip. Optim. 29(2), 149–158 (2005)

\bibitem{JOS}Jin, Y., Olhofer, M.,  Sendhoff, B.: Dynamic weighted aggregation for evolutionary multi-objective optimization: Why does it work and how. In: L.A. Spector, (ed.) Proceedings of the genetic and evolutionary computation conference, pp. 1042-1049. San Francisco, CA, USA (2001)

\bibitem{evironment 1}Leschine, T.M., Wallenius, H., Verdini, W.A.: Interactive multiobjective analysis and assimilative capacity-based ocean disposal decisions. Eur. J. Oper. Res. 56(2), 278-289 (1992)		

\bibitem{limited BFGS}Lapucci, M., Mansueto, P.: A limited memory Quasi-Newton approach for multi-objective optimization. Comput. Optim. Appl. 85(1), 33-73 (2023)


\bibitem{conjugate} Lucambio P{\'e}rez, L.R.,  Prudente, L.F.: Nonlinear conjugate gradient methods for vector optimization. SIAM J. Optim. 28(3), 2690-2720 (2018)

\bibitem{LDTZ} Laumanns, M., Thiele, L., Deb, K., Zitzler, E.: Combining convergence and diversity in evolutionary multiobjective optimization. Evol. Comput. 10(3), 263-282 (2002)

\bibitem{LOV} Lovison, A.: Singular continuation: Generating piecewise linear approximations to pareto sets via global analysis. SIAM J. Optim. 21(2), 463-490 (2011)

\bibitem{engineering 2}Marler, R.T., Arora, J.S.: Survey of multi-objective optimization methods for engineering. Struct. Multidiscip. O. 26, 369-395 (2004)	

\bibitem{BB} Morovati V., Pourkarimi L., Basirzadeh H.: Barzilai and Borwein's method for multiobjective optimization problems. Numer. Algorithms. 72, 539-604 (2016)

\bibitem{nonmonotone quasi-Newton} Mahdavi-Amiri, N., Salehi Sadaghiani, F.: A superlinearly convergent nonmonotone quasi-Newton method for unconstrained multiobjective optimization. Optim. Method. Softw. 35(6), 1223-1247 (2020)

\bibitem{review-quasi-Newton}Morovati, V., Basirzadeh, H., Pourkarimi, L.: Quasi-Newton methods for multiobjective optimization problems. 4or-q. J. Oper. Res. 16, 261-294 (2018)		

\bibitem{PNR}Preuss, M., Naujoks, B.,  Rudolph, G.: Pareto set and EMOA behavior for simple multimodal multiobjective functions. In: Runarsson, T.P., et al. (ed.) Proceedings of the Ninth International Conference
on Parallel Problem Solving from Nature (PPSN IX), pp. 513-522. Springer, Berlin. (2006)

\bibitem{engineering 1}Pereira, J.L.J., Oliver, G.A., Francisco, M.B., Cunha Jr, S.S., Gomes, G.F.: A review of multi-objective optimization: methods and algorithms in mechanical engineering problems. Arch. Comput. Method E. 29(4), 2285-2308 (2022)

\bibitem{quasi-Newton_1} Povalej {\v{Z}}.: Quasi-Newton's method for multiobjective optimization. J. Comput. Appl. Math. 255, 765-777 (2014)

\bibitem{Global quasi-Newton}Prudente, L.F., Souza, D.R.: Global convergence of a BFGS-type algorithm for nonconvex multiobjective optimization problems. Comput. Optim. Appl. 88(3), 719–757 (2024)

\bibitem{wolf-quasi-Newton}Prudente, L.F.,  Souza, D.R.: A quasi-Newton method with Wolfe line searches for multiobjective optimization. J. Optim. Theory Appl. 194(3), 1107-1140 (2022)	

\bibitem{quasi-Newton_2}Qu, S., Mark G., Felix T.C.: Quasi-Newton methods for solving multiobjective optimization. Oper. Res. Lett. 39(5), 397-399 (2011)

\bibitem{machine 2}Sener, O., Koltun, V.: Multi-task learning as multi-objective optimization. Adv. Neural Inf. Process. 31, (2018) 

\bibitem{SLCDT}Sch$\ddot{a}$tze, O., Laumanns, M., Coello Coello, C. A., Dellnitz, M., Talbi, E.G.: Convergence of stochastic search algorithms to finite size Pareto set approximations. J. Global. Optim. 41(4), 559-577 (2008)

\bibitem{SD} Stadler W., Dauer J.: Multicriteria optimization in engineering: a tutorial and survey. AIAA. 150, 209-249 (1993)

\bibitem{proximal} Tanabe H., Fukuda E.H., Yamashita N.: Proximal gradient methods for multiobjective optimization and their applications. Comput. Optim. Appl. 72, 339-361 (2019)

\bibitem{machine 1}Ye, F., Lin, B., Yue, Z., Guo, P., Xiao, Q., Zhang, Y.: Multi-objective meta learning. Adv. Neural Inf. Process. 34, 21338-21351 (2021)


		
		
		
		
		
		
		
		
		
		
	\end{thebibliography}
\end{document}